\newtheorem{theorem}{Theorem}
\newtheorem{corollary}[theorem]{Corollary}
\newtheorem{proposition}[theorem]{Proposition}
\newtheorem{lemma}[theorem]{Lemma}
\newtheorem{example}[theorem]{Example}
\newtheorem{remark}[theorem]{Remark}
\newtheorem{definition}[theorem]{Definition}
\newtheorem{conjecture}[theorem]{Conjecture}
\newcommand{\cF}{\mathcal{F}}
\newcommand{\cG}{\mathcal{G}}
\newcommand{\cI}{\mathcal{I}}
\newcommand{\cP}{\mathcal{P}}
\newcommand{\cQ}{\mathcal{Q}}
\newcommand{\C}{\mathbb{C}}
\newcommand{\N}{\mathbb{N}}
\newcommand{\Q}{\mathbb{Q}}
\newcommand{\R}{\mathbb{R}}
\newcommand{\Z}{\mathbb{Z}} 
\newcommand\rit{%
    \tikz[line cap=round,x=1ex,y=1ex,line width=0.3pt]
    {\draw (0,0)--(0,1)--(1,0)--(0,0);}%
}
\begin{document}
  \title{Plane point sets with many squares or isosceles right triangles}
  \date{}
  \author{
     {\small \sc Sascha Kurz (sascha.kurz@uni-bayreuth.de)}\thanks{This paper is inspired by an extensive discussion on the Sequence Fans Mailing List with contributions from many people.}\\ 
     {\footnotesize Department of Mathematics, University of Bayreuth, D-95440 Bayreuth, Germany}
  }
  \maketitle
  
  \vspace*{-6mm}
  
  \small{
  \noindent
  $$
  \begin{array}{ll}
    \text{\textbf{Keywords:}} & \text{Erd\H{o}os problems, similar squares, isosceles right triangles, Euclidean plane} \\
    \text{\textbf{MSC:}} & \text{52C10 (05C35)} \\
  \end{array}
  $$
  }
  \noindent
  \rule{\textwidth}{0.3 mm}
    \noindent
    \textbf{Abstract:} 
    How many squares are spanned by $n$ points in the plane? Here we study the corresponding maximum possible number $S_{\square}(n)$ of squares and determine the exact values for all 
    $n\le 17$. For $18\le n\le 100$ we give lower bounds for $S_{\square}(n)$. Besides that a few preliminary structural results are obtained. 
    For the related problem of the maximum possible number $S_{\rit}(n)$ of isosceles right triangles we determine the exact values for $n\le 14$ and give lower bounds for 
    $15\le n\le 50$.
  \noindent
  \rule{\textwidth}{0.3 mm}

\section{Introduction}
\label{sec_introduction}
Given a finite set of points $\cQ\subset\R^2$, what is the maximum number $S_{\cQ}(n)$ of similar copies that can be contained in an $n$-point set in the plane? 
The origin of this problem can be traced back at least to Erd\H{o}s and Purdy \cite{erdHos1976some}. Besides being now a classical problem in combinatorial geometry 
there are connections to pattern recognition problems, see e.g.\ \cite{brass2002combinatorial} and the references cited therein. In such applications 
similarity is mostly replaced by congruency, so that we denote the maximum number of congruent copies of a finite set $\cQ\subset\R^2$ that can be contained in an 
$n$-point set in the plane by $C_{\cQ}(n)$. An easy algorithm for the corresponding congruent subset detection problem of $\cQ$ in $\cP$ is to choose two 
arbitrary (different) points $q_1,q_2\in\cQ$ and to loop over all pairs of points $p_1,p_2\in\cP$ where $d(q_1,q_2)=d(p_1,p_2)$, i.e., where the distances coincide. 
The complexity analysis of the algorithm requires one to determine $C_{\left\{q_1,q_2\right\}}(n)$, i.e., the maximum number of unit distances in the plane (sequence 
A186705\footnote{For the subsequently mentioned integer sequences see the {\lq\lq}On-Line Encyclopedia of Integer Sequences{\rq\rq} (OEIS) at \url{https://oeis.org}.}), a 
famous open problem introduced by Erd\H{o}s \cite{erdos1946sets}. The best upper bound known still is $O\!\left(n^{4/3}\right)$ \cite{spencer1984unit}, with a 
recent constant factor improvement, see \cite{agoston2020improved}.\footnote{This upper bound applies to all strictly convex norms, not just Euclidean distance, and can in 
fact be attained for certain special norms, see \cite{valtr2005strictly,swanepoel2018combinatorial}.} From that we can conclude $C_{\cQ}(n)\in O\!\left(n^{4/3}\right)$ 
and all congruent copies of $\cQ$ can be found in $O\!\left(\#\cQ\cdot n^{4/3}\log n\right)$ time, see \cite{brass2002combinatorial} for more details. 
For $S_{\cQ}(n)$ we have an upper bound of $n(n-1)$ and quadratic lower bound if $\cQ$ contains only algebraic points \cite{elekes1994similar}. In \cite{laczkovich1997number} 
a characterization of the point sets $\cQ$ with $S_{\cQ}(n)=\Theta(n^2)$ was obtained. All similar copies of $\cQ$ can be found in $O\!\left(\#\cQ\cdot n^{2}\log n\right)$ time, 
see e.g.\ \cite{brass2002combinatorial}. However, the existence of $\lim\limits_{n\to\infty} S_{\cQ}(n)$ is unknown 
for all non-trivial sets $\cQ$. Here we want to study the maximum number $S_{\square}(n)$ of squares contained in an $n$-point set in the plane (sequence A051602). We will 
be mainly interested in the determination of exact values or tight bounds for $S_{\square}(n)$ for the cases where $n$ is rather small. Taking the points of 
an $m\times m$ integer grid gives  $\liminf\limits_{n\to\infty} \frac{S_{\square}(n)}{n^2}\ge\frac{1}{12}$ (see sequence A002415 for the precise counts of squares). 
Taking the integer points inside circles of increasing radii gives $\liminf\limits_{n\to\infty} \frac{S_{\square}(n)}{n^2}\ge \frac{1-\frac{2}{\pi}}{4}> \frac{1}{11.008}$, 
see the comments in sequence A051602.\footnote{https://oeis.org/A051602/a051602\_2.txt} 
Denote the maximum number of isosceles right triangles in an $n$-point set by $S_{\rit}(n)$ and observe that each square consists of four such triangles. With this, 
the upper bound $S_{\rit}(n)\le \left\lfloor \tfrac{2}{3}(n-1)^2-\tfrac{5}{3}  \right\rfloor$ from \cite{abrego2011maximum} gives $\limsup\limits_{n\to\infty} 
\frac{S_{\square}(n)}{n^2}\le\frac{1}{6}$. In Proposition~\ref{proposition_general_upper_bound}  we will show 
$\limsup\limits_{n\to\infty} \frac{S_{\square}(n)}{n^2}\le\frac{1}{8}$. For the maximum number of equilateral triangles in the plane we refer to 
\cite{abrego2000maximum,pach2003many}. The latter reference takes the additional assumption that the $n$ points must be in convex position, see 
also \cite{xu2004number} for isosceles right triangles under the same assumption. 

Of course similar problems can be considered in higher dimensions or with different metrics \cite{brass2005problems}. Another variant is the number of rhombi or parallelograms 
contained in a plane point set. Since not all rhombi or parallelograms are similar the upper bound $O(n^2)$ does not apply and is indeed violated. 
For e.g.\ axis-parallel squares the lower bound $\Omega(n^2)$ is violated, cf.~\cite[Theorem 1]{van1991finding}. For results on repeated angles 
see e.g.\ \cite{pach1992repeated}. Another interesting variant is the maximum number of unit circles spanned by $n$ points in the plane, see e.g.\ \cite{harborth1986point}.

The remaining part is structured as follows. Section~\ref{sec_rit} is devoted to point sets with many isosceles right triangles. The approach of 
constructing point sets by recursively adding the vertices of an additional square is studied in depth in Section~\ref{sec_rec_extension}. For every point set in $\R^2$ there exists 
another point set in $\Z^2$ that spans at least as many squares, see Theorem~\ref{thm_grid_assumption}. An explicit upper bound on the necessary grid size is stated in 
Proposition~\ref{prop_grid_size_bound}. While there are infinitely many $7$-point sets spanning two squares that are pairwise non-similar, for all numbers $n$ of points and 
$m$ of squares there exist only finitely many equivalence classes if one uses a suitable combinatorial description, see Definition~\ref{definition_oriented_square_set} and 
Definition~\ref{definition_oriented_square_set_realizable}. With this, the determination of $S_{\square}(n)$ becomes a finite computational problem. In 
Section~\ref{sec_bounds_squares} we discuss bounds for $S_{\square}(n)$ and determine the exact values for all $n\le 17$. In an appendix we list several point sets that provide 
lower bounds for different variants of the problem.

\section{Plane point sets with many isosceles right triangles}
\label{sec_rit}
By definition an $n$-point set $\cP$ is a subset of $\R^2$ of cardinality $\#\cP=n$. By $S_{\rit}(\cP)$ we denote the number of isosceles right triangles contained 
in $\cP\subseteq \R^2$. In \cite[Theorem 6]{abrego2011maximum} the numbers $S_{\rit}(3)=1$, $S_{\rit}(4)=4$, $S_{\rit}(5)=8$, $S_{\rit}(6)=11$, $S_{\rit}(7)=15$, 
$S_{\rit}(8)=20$, and $S_{\rit}(9)=28$ (sequence A186926) were determined. Each pair of points can form the edge of six different isosceles right triangles, see 
Figure~\ref{fig_1_extension_line}, so that 
$S_{\rit}(n)\le {n\choose 2}\cdot 6/3=n(n-1)$, which equals the general upper bound $S_{\cQ}(n)\le n(n-1)$. As mentioned, the improved upper bound 
$S_{\rit}(n)\le \left\lfloor \tfrac{2}{3}(n-1)^2-\tfrac{5}{3}  \right\rfloor$ for $n\ge 3$ was shown in \cite[Theorem 5]{abrego2011maximum}. 

\begin{figure}[htp!]
\begin{center}
\begin{tikzpicture}[scale=0.6]
  \draw[thick,blue](0,2)--(2,0);  
  \draw[thick,blue](0,2)--(0,-2);
  \draw[thick,blue](2,2)--(2,-2);
  \draw[thick,blue](0,-2)--(2,0);
  \draw[thick,blue](2,2)--(0,0);
  \draw[thick,blue](2,-2)--(0,0);
  \fill[black] (0,0)circle(4.5pt);
  \fill[black] (2,0)circle(4.5pt);
  \fill[black] (2,2)circle(4.5pt);
  \fill[black] (0,2)circle(4.5pt);
  \fill[black] (2,-2)circle(4.5pt);
  \fill[black] (0,-2)circle(4.5pt);
  \fill[black] (1,1)circle(4.5pt);
  \fill[black] (1,-1)circle(4.5pt);
  \draw[thick](0,0)--(2,0);
  \draw (0,0) node[black,anchor=east]{$p_1$};
  \draw (2,0) node[black,anchor=west]{$p_2$};
  \draw (0,2) node[black,anchor=north east]{$p_3^1$};
  \draw (2,2) node[black,anchor=north west]{$p_3^2$};
  \draw (0,-2) node[black,anchor=south east]{$p_3^3$};
  \draw (2,-2) node[black,anchor=south west]{$p_3^4$};
  \draw (1,1) node[black,anchor=south]{$p_3^5$};
  \draw (1,-1) node[black,anchor=north]{$p_3^6$};
\end{tikzpicture}
\caption{The third vertices of the six isosceles right triangles containing a given line segment $\left\{p_1,p_2\right\}$ as an edge.}
\label{fig_1_extension_line}
\end{center}
\end{figure}
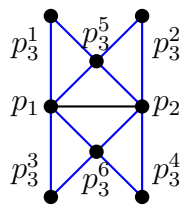

If $\cP'$ arises from 
an $n$-point set $\cP$ by picking a pair $\left\{p_1,p_2\right\}$ of points in $\cP$ and setting $\cP'=\cP\cup\left\{p_3\right\}$ for one of the six possibilities 
for $p_3$ such that $\left\{p_1,p_2,p_3\right\}$ forms an isosceles right triangle, then we say that $\cP'$ is obtained by $1$-extension. If an $n$-point set $\cP$ is 
obtained by a sequence of $1$-extensions starting from an arbitrary isosceles right triangle, then we say that $\cP$ can be obtained by $1$-extension. In 
Table~\ref{table_iso_types_exhaustive_1_extension_triangles} we have listed the number of non-similar points sets that can be obtained by $1$-extension per number $n$ of points 
and $m$ of isosceles right triangles. The number of isosceles right triangles of such an $n$-point set is at least $n-2$. E.g.\ a $5$-point set obtained by joining two isosceles right triangles 
at a common vertex, such that no further isosceles right triangles are spanned, cannot be obtained by $1$-extension. So, clearly not all $n$-point sets can be obtained 
by $1$-extension. In Proposition~\ref{proposition_1_extension_obtainable} we will show that for points sets with at most $7$ points a sufficiently large 
number of isosceles right triangles implies that the point set can be obtained by $1$-extension. For $n=9$ and $7\le m\le 8$ the numbers of cases are quite numerous so that 
we did not have determined the number of similarity types. 

\begin{table}[htp!]
  \begin{center}
    \begin{tabular}{llllllllllllllllllll}
      \hline
      $n$ & 3 & 4 & 4 & 4 & 5  & 5 & 5 & 5 & 5 & 5 & 6   & 6  & 6  & 6  & 6 & 6 & 6  & 6  \\ 
      $m$ & 1 & 2 & 3 & 4 & 3  & 4 & 5 & 6 & 7 & 8 & 4   & 5  & 6  & 7  & 8 & 9 & 10 & 11 \\ 
      \# &  1 & 2 & 1 & 1 & 16 & 4 & 2 & 1 & 0 & 1 & 232 & 88 & 38 & 16 & 6 & 1 & 3  & 1  \\
      \hline
    \end{tabular}  
    \begin{tabular}{llllllllllllllll}
      \hline
      $n$ & 7    & 7    & 7    & 7   & 7   & 7  & 7  & 7  & 7  & 7  & 7  & 8      \\ 
      $m$ & 5    & 6    & 7    & 8   & 9   & 10 & 11 & 12 & 13 & 14 & 15 & 6      \\ 
      \#  & 5383 & 2397 & 1051 & 490 & 164 & 50 & 39 & 17 & 7  & 6  & 2  & 172408 \\ 
      \hline
    \end{tabular}
    \begin{tabular}{llllllllllllllll}
      \hline
      $n$ & 8     & 8     & 8     & 8    & 8    & 8    & 8   & 8   & 8   & 8  & 8  & 8  \\
      $m$ & 7     & 8     & 9     & 10   & 11   & 12   & 13  & 14  & 15  & 16 & 17 & 18 \\ 
      \#  & 89266 & 41475 & 19925 & 7123 & 2488 & 1513 & 685 & 253 & 137 & 75 & 31 & 17 \\ 
      \hline
    \end{tabular}
    \begin{tabular}{llllllllllllllll}
      \hline
      $n$ & 8  & 8  & 9       & 9      & 9      & 9      & 9     & 9     & 9     & 9    \\
      $m$ & 19 & 20 & 9       & 10     & 11     & 12     & 13    & 14    & 15    & 16   \\ 
      \#  & 2  & 5  & 1623291 & 878770 & 379869 & 142722 & 77106 & 36226 & 14662 & 7194 \\
      \hline
    \end{tabular}
    \begin{tabular}{llllllllllllllll}
      \hline
      9    & 9    & 9   & 9   & 9   & 9  & 9  & 9  & 9  & 9  & 9  & 9  \\
      17   & 18   & 19  & 20  & 21  & 22 & 23 & 24 & 25 & 26 & 27 & 28 \\ 
      3475 & 1474 & 856 & 310 & 186 & 94 & 33 & 20 & 5  & 4  & 1  & 1  \\
      \hline
    \end{tabular}  
    \caption{Number of non-similar points sets $\cP$ produced by recursive $1$-extension per number of points $n$ and
    isosceles right triangles $m$.}
    \label{table_iso_types_exhaustive_1_extension_triangles}
  \end{center}
\end{table}

\begin{definition}
  Let $\cP\subset \R^2$ be an arbitrary point set. A point set $\cP'\subseteq \cP$ is called a \emph{$1$-extension subconfiguration} if $\cP'\subseteq \cP$ can be obtained by 
  $1$-extension. If no proper superset of $\cP'$ can be obtained by $1$-extension, then we call $\cP'$ $1$-extension maximal.
\end{definition}
\begin{lemma}
  \label{lemma_1_extension_maximal_rit}
  Let $\cP\subset\R^2$ be an arbitrary $n$-point set and $\cP'$ be a $1$-extension maximal subconfiguration. Then we have 
  $$
    S_{\rit}(\cP) \le S_{\rit}(\cP') + 2\cdot{{\# \cP\backslash\cP'} \choose 2}+S_{\rit}(\#\cP\backslash\cP')
    \le S_{\rit}(\#\cP') + 2\cdot{{\# \cP\backslash\cP'} \choose 2}+S_{\rit}(\#\cP\backslash\cP'). 
  $$
\end{lemma}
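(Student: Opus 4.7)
The plan is to partition the isosceles right triangles spanned by $\cP$ according to how many of their vertices lie in $\cP'$ versus $\cQ := \cP\setminus\cP'$, and to bound each class separately. Triangles with all three vertices in $\cP'$ contribute at most $S_{\rit}(\cP')$, and those with all three vertices in $\cQ$ contribute at most $S_{\rit}(\#\cQ)$ by definition. For triangles with two vertices $p_1,p_2\in\cP'$ and one vertex $q\in\cQ$, such a triangle is precisely a valid $1$-extension of $\cP'$ inside $\cP$, so $\cP'\cup\{q\}$ would also be a $1$-extension subconfiguration, contradicting the $1$-extension maximality of $\cP'$; hence this class is empty.

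The substantive step is bounding the triangles with one vertex in $\cP'$ and two vertices in $\cQ$ by $2\binom{\#\cQ}{2}$. For each pair $\{q_1,q_2\}\subset\cQ$, any third vertex completing an isosceles right triangle must be one of the six positions $p_3^1,\ldots,p_3^6$ from Figure~\ref{fig_1_extension_line} (with $q_1,q_2$ in the roles of $p_1,p_2$). I would show that at most two of these six positions can lie in $\cP'$. Suppose to the contrary that three of them, say $p_3^i,p_3^j,p_3^k$, lie in $\cP'$. If some pair among them forms an isosceles right triangle with $q_1$ or with $q_2$, then two points of $\cP'$ together with $q_1$ or $q_2$ span such a triangle, so $q_1$ or $q_2$ can be adjoined to $\cP'$ by $1$-extension, contradicting maximality.

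The argument thus reduces to the finite geometric claim that among any three of the six positions $p_3^1,\ldots,p_3^6$ in Figure~\ref{fig_1_extension_line}, some pair spans an isosceles right triangle with $p_1$ or with $p_2$; I expect this to be the main (but mild) obstacle. Directly testing the $\binom{6}{2}=15$ pairs, one finds exactly $9$ ``good'' pairs (forming such a triangle with $p_1$ or $p_2$) and $6$ ``bad'' ones; the six bad pairs happen to form a single $6$-cycle on $\{1,\ldots,6\}$, which is triangle-free, so every $3$-subset contains a good pair, as required. Summing the four contributions gives the first inequality. The second inequality is then immediate from $S_{\rit}(\cP')\le S_{\rit}(\#\cP')$, which holds by the definition of $S_{\rit}(n)$ as the maximum over $n$-point sets.
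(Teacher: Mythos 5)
Your proposal is correct and follows essentially the same route as the paper: classify each triangle not contained in $\cP'$ by how many vertices it has in $\cP\backslash\cP'$, use $1$-extension maximality to exclude the case of exactly one vertex outside $\cP'$, and bound the remaining case by checking that for each pair in $\cP\backslash\cP'$ at most two of the six candidate third vertices from Figure~\ref{fig_1_extension_line} can lie in $\cP'$. The paper justifies this last step by listing the same nine ``good'' triples; your explicit observation that the six ``bad'' pairs form a triangle-free $6$-cycle is just a cleaner way of drawing the identical conclusion.
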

\begin{proof}
  Let $t$ be an arbitrary isosceles right triangle in $\cP$ that is not contained in $\cP'$. Since $\cP'$ is $1$-extension maximal either $2$ or $3$ vertices of $t$ have to be contained 
  in $\cP\backslash\cP'$. The second case can occur at most $S_{\rit}(\#\cP\backslash\cP')$ times. For the first case we observe that there are 
  ${{\# \cP\backslash\cP'}\choose 2}$ pairs of points in $\cP\backslash\cP'$. Consider such a pair $\left\{p_1,p_2\right\}$ and the third point $p_3^i$ of an isosceles right triangle 
  as in Figure~\ref{fig_1_extension_line}. Since the triples $\left\{p_3^1,p_3^2,p_1\right\}$, $\left\{p_3^3,p_3^4,p_1\right\}$, $\left\{p_3^1,p_3^3,p_2\right\}$, 
  $\left\{p_3^2,p_3^4,p_1\right\}$, $\left\{p_3^5,p_3^6,p_1\right\}$, $\left\{p_3^1,p_3^5,p_1\right\}$, $\left\{p_3^2,p_3^5,p_2\right\}$, $\left\{p_3^3,p_3^6,p_1\right\}$, 
  and $\left\{p_3^4,p_3^6,p_2\right\}$ form isosceles right triangles with exactly two vertices in $\cP'$ at most two out of the six points $p_3^1,\dots,p_3^6$ can be 
  obtained in $\cP'$. Thus, the first case can occur at most $2\cdot{{\# \cP\backslash\cP'}\choose 2}$ times.
\end{proof}

\begin{example}
  \label{ex_non_1_extension}
  Consider the $6$-point set
  $$
    \cP=\left\{(0,0),(1,0),(0,1),(1,1),\left(\tfrac{3}{5},\tfrac{6}{5}\right),\left(\tfrac{2}{5},\tfrac{4}{5}\right)\right\}
  $$
  and its subset $\cP'=\left\{(0,0),(1,0),(0,1),(1,1)\right\}$ with $S_{\rit}(\cP)=6$ and $S_{\rit}(\cP')=4$. The $1$-extension subconfiguration $\cP'$ is $1$-extension maximal, 
  so that the upper bound of Lemma~\ref{lemma_1_extension_maximal_rit} is attained with equality. 
\end{example}

An easy averaging argument implies the existence of subconfigurations with relatively many isosceles right triangles:
\begin{lemma}(\cite[Lemma 8]{abrego2011maximum})\\
  \label{lemma_avg_t}
  Let $\cP$ be an $n$-point set. If $S_{\rit}(A)\le b$ for all $\cP'\le\cP$ with $\# \cP'=k$, then we have
  $$
    S_{\rit}(\cP)\le \left\lfloor {n\choose 3}\cdot b\,/{k\choose 3}\right\rfloor.
  $$
\end{lemma}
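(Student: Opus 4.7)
The plan is a standard double-counting argument over the incidence between isosceles right triangles in $\cP$ and the $k$-point subsets of $\cP$ containing them. Concretely, I would let $T$ denote the set of isosceles right triangles of $\cP$, viewed as $3$-element subsets of $\cP$, so that $\#T = S_{\rit}(\cP)$, and consider
\[
  N \;=\; \#\bigl\{\,(t,\cP')\,:\, t\in T,\ \cP'\subseteq\cP,\ \#\cP'=k,\ t\subseteq\cP'\,\bigr\}.
\]

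First I would count $N$ by summing over the triangles. Each $t\in T$ is a fixed $3$-subset of $\cP$, and the number of $k$-subsets $\cP'\subseteq\cP$ with $t\subseteq\cP'$ equals $\binom{n-3}{k-3}$. Hence $N = S_{\rit}(\cP)\cdot\binom{n-3}{k-3}$. Next I would count $N$ by summing over the $k$-subsets: for each $\cP'$ with $\#\cP'=k$, the number of triangles $t\in T$ with $t\subseteq\cP'$ is precisely $S_{\rit}(\cP')\le b$ by hypothesis, giving $N\le\binom{n}{k}\cdot b$.

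Combining the two expressions for $N$ yields
\[
  S_{\rit}(\cP)\;\le\;\frac{\binom{n}{k}}{\binom{n-3}{k-3}}\cdot b,
\]
and the elementary identity $\binom{n}{k}\binom{k}{3}=\binom{n}{3}\binom{n-3}{k-3}$ rewrites this as $S_{\rit}(\cP)\le\binom{n}{3}b/\binom{k}{3}$. Since the left-hand side is an integer, I may take the floor on the right, which gives the claimed bound.

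There is really no main obstacle here; the only tiny subtlety is the binomial manipulation and the observation that the floor is justified because $S_{\rit}(\cP)\in\Z$. The argument is the classical averaging/double-counting trick and does not require any geometric input beyond the definition of $S_{\rit}$ as a count of $3$-element subsets.
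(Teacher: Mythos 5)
Your proof is correct and follows essentially the same double-counting argument as the paper: count incidences between isosceles right triangles and $k$-subsets, use that each triangle lies in $\binom{n-3}{k-3}$ such subsets and each subset contains at most $b$ triangles, then simplify the binomial ratio. The paper's version is just a more terse statement of the identical averaging step, so there is nothing to add.
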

\begin{proof}
  The point set $\cP$ contains ${n\choose k}$ subsets of cardinality $k$. Since each isosceles right triangle is contained in ${{n-3}\choose{k-3}}$ such subsets of cardinality 
  $k$, we have 
  $$
    S_{\rit}(\cP)\le {n\choose k}\cdot b/ {{n-3}\choose{k-3}} = \frac{n(n-1)(n-2)}{k(k-1)(k-2)}\cdot b.
  $$ 
\end{proof}

Adding an arbitrary point to a $4$-point set consisting of four isosceles right triangles, we obtain a $5$-point set that consists of four isosceles right triangles in most cases. 
Such point sets obviously cannot be obtained by $1$-extension. In order to avoid these trivial modifications we forbid {\lq\lq}isolated{\rq\rq} points. More precisely, we call an 
$n$-point set $\cP$ \emph{reduced} if each point is contained in at least one isosceles right triangle.

\begin{lemma}
  \label{lemma_one_less}
  Let $\cP$ be a reduced $n$-point set and $\widetilde{\cP}$ be a $1$-extension maximal subconfiguration. Then, we have $\#\widetilde{\cP}\neq \#\cP$.
\end{lemma}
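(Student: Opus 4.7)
My plan is to argue by contradiction. Suppose $\widetilde{\cP}$ is a $1$-extension maximal subconfiguration with $\#\widetilde{\cP}=\#\cP$. Since $\widetilde{\cP}\subseteq\cP$, this forces $\widetilde{\cP}=\cP$, so $\cP$ itself is a $1$-extension subconfiguration and therefore admits a build sequence $\cP_{3}\subsetneq\cP_{4}\subsetneq\cdots\subsetneq\cP_{n}=\cP$, where $\cP_{3}\subseteq\cP$ is an isosceles right triangle and for each $k\ge 3$ the set $\cP_{k+1}=\cP_{k}\cup\{p_{k+1}\}$ is obtained by adjoining a point $p_{k+1}\in\cP$ that completes an isosceles right triangle $\{p_{k+1},a_{k},b_{k}\}$ with some $a_{k},b_{k}\in\cP_{k}$.

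The key step is to turn reducedness into a global obstruction to such a build sequence. Reducedness provides, for every $q\in\cP$, a witness isosceles right triangle $T_{q}\subseteq\cP$ containing $q$. I would then pick an extremal point $q^{*}\in\cP$ and show that its witness structure is incompatible with any insertion position in the build sequence: concretely, I would trace the witness triangles backward through $\cP=\cP_n\supsetneq\cP_{n-1}\supsetneq\cdots\supsetneq\cP_{3}$ and argue that some $q^{*}$ must appear at a stage $k+1$ where no pair $(a_{k},b_{k})\in\cP_{k}^{2}$ exists completing an isosceles right triangle through $q^{*}$, contradicting the existence of the build sequence.

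The main obstacle is precisely the passage from per-point reducedness to the global incompatibility: reducedness only furnishes one witness triangle per point, whereas ruling out a full build sequence is a global combinatorial constraint. I expect the heart of the argument to mirror the averaging/double-counting style of Lemma~\ref{lemma_avg_t}, combined with a careful extremal choice of $q^{*}$ (for instance, a point whose every incident isosceles right triangle uses partners that themselves admit only 'late' witness triangles). Once $q^{*}$ with the required properties is identified, the contradiction with the assumption $\cP_{n}=\cP$ follows, establishing $\#\widetilde{\cP}\neq\#\cP$.
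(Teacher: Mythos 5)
There is a genuine problem, and it begins before your argument does: the statement as printed is a typo. Read literally, $\#\widetilde{\cP}\neq\#\cP$ is false --- take $\cP$ to be the four corners of a unit square; it is reduced, it can be obtained by $1$-extension, and hence $\cP$ itself is a $1$-extension maximal subconfiguration with $\#\widetilde{\cP}=\#\cP$. The intended claim, which is what the paper's proof establishes and what every later application relies on (the deduction $\#\widetilde{\cP}\le 6$ for an $8$-point set that cannot be obtained by $1$-extension, and the conclusion $\#\widetilde{\cP}=\#\cP$ in the proof of $S_{\rit}(10)=35$, which would be impossible if the lemma excluded that case), is $\#\widetilde{\cP}\neq\#\cP-1$: a $1$-extension maximal subconfiguration of a reduced set cannot miss exactly one point. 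Your proposal commits to the literal reading, correctly deduces $\widetilde{\cP}=\cP$, and then tries to manufacture a contradiction between reducedness and the existence of a build sequence for $\cP$. No such contradiction exists: reducedness (every point lies in some isosceles right triangle) is entirely compatible with obtainability by $1$-extension --- in the surrounding propositions it is precisely the hypothesis used to help \emph{guarantee} obtainability by $1$-extension. So the ``global obstruction'' you hope to extract from an extremal choice of $q^{*}$ cannot be found; the key step you defer is not merely missing but unfillable.

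For the record, the intended statement has a two-line proof: if $\#\widetilde{\cP}=\#\cP-1$, let $p$ be the unique point of $\cP\backslash\widetilde{\cP}$. Since $\cP$ is reduced there is an isosceles right triangle $t\subseteq\cP$ containing $p$, and its other two vertices necessarily lie in $\widetilde{\cP}$, so $\widetilde{\cP}\cup\{p\}$ arises from $\widetilde{\cP}$ by a single $1$-extension step, contradicting the maximality of $\widetilde{\cP}$.
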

\begin{proof}
  Assume $\#\widetilde{\cP}=\#\cP$ and let $p$ be the unique vertex contained in $\cP\backslash\widetilde{\cP}$. Since $\cP$ is reduced there exist an isosceles right triangle 
  $t$ containing $p$ as a vertex. Since $t$ intersects $\widetilde{\cP}$ in two vertices, $\widetilde{\cP}$ is not $1$-extension maximal -- contradiction. 
\end{proof}

\begin{proposition}$\,$
  \label{proposition_1_extension_obtainable}
  \begin{enumerate}
    \item[(1)] Each reduced $3$-point set $\cP$ with $S_{\rit}(\cP)\ge 1$ can be obtained by $1$-extension and $S_{\rit}(3)=1$.\\[-7mm]
    \item[(2)] Each reduced $4$-point set $\cP$ with $S_{\rit}(\cP)\ge 2$ can be obtained by $1$-extension and $S_{\rit}(4)=4$.\\[-7mm]
    \item[(3)] Each reduced $5$-point set $\cP$ with $S_{\rit}(\cP)\ge 3$ can be obtained by $1$-extension and $S_{\rit}(5)=8$.\\[-7mm]
    \item[(4)] Each reduced $6$-point set $\cP$ with $S_{\rit}(\cP)\ge 7$ can be obtained by $1$-extension and $S_{\rit}(6)=11$.\\[-7mm]
    \item[(5)] Each reduced $7$-point set $\cP$ with $S_{\rit}(\cP)\ge 11$ can be obtained by $1$-ext.\ and $S_{\rit}(7)=15$.
  \end{enumerate}
\end{proposition}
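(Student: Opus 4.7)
The plan is induction on $n$, proving the five assertions simultaneously. The exact values of $S_{\rit}(n)$ for $n\le 9$ were already established in \cite{abrego2011maximum}, so the substantive new content is the $1$-extension obtainability statement. Case~(1) is immediate: a reduced $3$-point set with $S_{\rit}(\cP)\ge 1$ is itself an isosceles right triangle, which by definition starts any $1$-extension sequence, and $S_{\rit}(3)=1$ because four collinear or generic points cannot pack more.

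For the inductive step, let $\cP$ be a reduced $n$-point set with $S_{\rit}(\cP)\ge t_n$, where $(t_4,t_5,t_6,t_7)=(2,3,7,11)$. Since $\cP$ contains at least one isosceles right triangle, $\cP$ admits a $1$-extension maximal subconfiguration $\widetilde{\cP}$. If $\widetilde{\cP}=\cP$ we are done; otherwise Lemma~\ref{lemma_one_less} forces $k:=\#\widetilde{\cP}\le n-2$, while $k\ge 3$ by definition of $1$-extension. Lemma~\ref{lemma_1_extension_maximal_rit}, combined with the inductive values of $S_{\rit}$, yields
$$S_{\rit}(\cP)\le S_{\rit}(k)+2\binom{n-k}{2}+S_{\rit}(n-k),$$
which I evaluate for every admissible $(n,k)$. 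For $n=4$ no admissible $k$ exists, so case~(2) is immediate; for $n=6$, $k=4$ gives $4+2+0=6<7$; and for $n=7$, $k=5$ gives $8+2+0=10<11$. These disposals contradict $S_{\rit}(\cP)\ge t_n$ in the respective subcases.

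The main obstacle is the borderline list $(n,k)\in\{(5,3),(6,3),(7,3),(7,4)\}$, where the naive bound meets or exceeds $t_n$ (values $3$, $8$, $17$, $11$ respectively). For these I would sharpen the argument using the more refined information hidden in the proof of Lemma~\ref{lemma_1_extension_maximal_rit}: maximality of $\widetilde{\cP}$ forbids any of the nine pairs $\{p_3^i,p_3^j\}$ listed there from lying inside $\widetilde{\cP}$, since each such pair together with $p_1$ or $p_2$ already spans an isosceles right triangle and would permit a further $1$-extension. Placing $\widetilde{\cP}$ at canonical coordinates (an isosceles right triangle of fixed shape) and enumerating, up to similarity, the finitely many placements of the two or three points of $\cP\setminus\widetilde{\cP}$ that could realise the required number of ``mixed'' triangles, then applying the forbidden-pair constraint throughout, reduces every borderline case to either a configuration admitting a further $1$-extension of $\widetilde{\cP}$ (contradicting maximality) or one with strictly fewer than $t_n$ isosceles right triangles. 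The exhaustive enumeration in Table~\ref{table_iso_types_exhaustive_1_extension_triangles} both motivates this borderline list and provides a direct verification. Once $1$-extension obtainability is established for every extremal reduced $n$-point set, the values $S_{\rit}(n)=1,4,8,11,15$ for $n=3,\dots,7$ re-emerge as the maximum $m$ in the corresponding row of that table, completing all five statements.
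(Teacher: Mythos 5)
Your skeleton --- take a $1$-extension maximal subconfiguration $\widetilde{\cP}$, use Lemma~\ref{lemma_one_less} to force $\#\widetilde{\cP}\le n-2$, and evaluate the bound of Lemma~\ref{lemma_1_extension_maximal_rit} --- is the same as the paper's for $n=7$, and your arithmetic correctly isolates the cases $(n,k)\in\{(5,3),(6,3),(7,3),(7,4)\}$ where that bound does not suffice. But those four cases are the entire substance of the proof, and you do not actually dispose of them: you defer to an unexecuted enumeration ``up to similarity'' of the points of $\cP\setminus\widetilde{\cP}$. For $(5,3)$ and $(7,4)$, where the bound is met with equality, such an equality analysis can in principle be made finite, but for $(7,3)$ the bound is $17$ against a threshold of $11$, so the configurations to be excluded are not pinned down by equality and form continuous families; moreover there are four points outside $\widetilde{\cP}$ in that case, not the ``two or three'' you propose to enumerate. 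The appeal to Table~\ref{table_iso_types_exhaustive_1_extension_triangles} as ``a direct verification'' also does not work: that table lists only point sets that \emph{can} be obtained by $1$-extension, so it cannot certify that every extremal reduced set is among them.

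The paper closes these gaps with three specific ideas you are missing. First, for $n=5$ and $n=6$ it does not start from a maximal subconfiguration at all but applies the averaging Lemma~\ref{lemma_avg_t} to produce a reduced subconfiguration on fewer points with enough triangles to invoke the previous case, then extends point by point; this avoids $(5,3)$ and $(6,3)$ entirely (only the harmless $k=4$ subcase survives for $n=6$). Second, for $n=7$ it observes that if any pair of points lies in two isosceles right triangles, those two triangles already form a $4$-point $1$-extension subconfiguration, so either some maximal subconfiguration has at least $4$ points or every pair lies in at most one triangle and $S_{\rit}(\cP)\le{7\choose 2}/3=7<11$; this kills $k=3$ in one line. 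Third, for $(7,4)$ it analyses the equality case of Lemma~\ref{lemma_1_extension_maximal_rit}: equality forces the three outside points to span an isosceles right triangle each of whose edges lies in two further mixed triangles, which produces a $5$-point $1$-extension subconfiguration and contradicts the fact that every maximal subconfiguration has at most $4$ points. You would need to supply arguments of this kind (or genuinely carry out, and justify the finiteness of, your enumeration) before the proposal constitutes a proof.
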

\begin{proof}
  Given the data of Table~\ref{table_iso_types_exhaustive_1_extension_triangles} it suffices to show that $\cP$ can be obtained by $1$-extension 
  if $S_{\rit}(\cP)$ is at least as large as proposed. 

  The statement is trivial for $\#\cP=3$. If $\#\cP=4$, then consider an arbitrary isosceles right triangle $t$. The statement follows from the 
  fact that each additional isosceles right triangle $t'$ shares an edge with $t$. For $\#\cP=5$ we apply Lemma~\ref{lemma_avg_t} to conclude the existence of 
  a $4$-point set $\cP'$ with $S_{\rit}(\cP')\ge 2$. Since $S_{\rit}(3)=1<2$, $\cP'$ is reduced, so that $\cP'$ can by obtained by $1$-extension due to (2). 
  Now consider an isosceles right triangle $t$ with vertex $\cP\backslash\cP'$. Since $t$ intersects $\cP'$ in two points, also $\cP$ can be obtained by $1$-extension, 
  see Lemma~\ref{lemma_one_less}. For $\#\cP=6$ we apply Lemma~\ref{lemma_avg_t} to conclude the existence of a $5$-point set $\cP'$ with $S_{\rit}(\cP')\ge 4$. If $\cP'$ 
  is reduced, then $\cP'$ 
  can be obtained by $1$-extension due to (3) and an isosceles right triangle $t$ with vertex $\cP\backslash\cP'$ intersects $\cP'$ in two vertices, so that also 
  $\cP$ can be obtained by $1$-extension. If $\cP'$ is not reduced, then $S_{\rit}(3)=1$ implies the existence of a reduced $4$-point set $\cP''$ with 
  $S_{\rit}(\cP'')=4$ and (2) yields that $\cP''$ can be obtained by $1$-extension. Now let $\widetilde{\cP}$ be a $1$-extension maximal subconfiguration of $\cP$, 
  so that $\#\widetilde{\cP}\ge \#\cP''=4$. If $\#\widetilde{\cP}=6$, then $\cP$ can be obtained by $1$-extension. If $\#\widetilde{\cP}=5$, then we can again 
  consider an isosceles right triangle containing the missing sixth point as a vertex to conclude that $\cP$ can be obtained by $1$-extension. Thus, it 
  remains to consider the case $\#\widetilde{\cP}=4$, where Lemma~\ref{lemma_1_extension_maximal_rit} yields
  $$
    S_{\rit}(\cP)\le S_{\rit}(4)+2{2\choose 2}+S_{\rit}(2)=6<7,
  $$    
  which is a contradiction.
  
  Now let $\#\cP=7$ and $\cP'$ be a $1$-extension maximal subconfiguration of $\cP$ with $\#\cP'<7=\#\cP$. If $\#\cP'\in\{5,6\}$, then Lemma~\ref{lemma_1_extension_maximal_rit} 
  gives 
  $$
    S_{\rit}(\cP)\le \max\left\{ S_{\rit}(5)+2{2\choose 2}+S_{\rit}(2),S_{\rit}(6)+2{1\choose 2}+S_{\rit}(1)\right\}=10<11,
  $$    
  which is a contradiction. If no pair of points is contained in at least two isosceles right triangles, then $S_{\rit}(\cP)\le {\#\cP\choose 2}/3$, so that 
  we can conclude $\#\cP'=4$ in our situation. Lemma~\ref{lemma_1_extension_maximal_rit} then yields 
  $$
    S_{\rit}(\cP)\le S_{\rit}(4)+2{3\choose 2}+S_{\rit}(3)=11.
  $$   
  However, if the upper bound of Lemma~\ref{lemma_1_extension_maximal_rit} is attained, then the three points in $\cP\backslash\cP'$ form an isosceles right triangle 
  and each edge is contained in two further isosceles right triangles, which yields a subconfiguration of cardinality $5$ that can be obtained by $1$-extension -- 
  contradiction. 
\end{proof}
Similar as in Example~\ref{ex_non_1_extension}, we can start from the (up to similarity) unique $5$-point set $\cP'$ with $S_{\rit}(\cP')=8$ and add two further points 
to obtain an $7$-point set $\cP$ with $S_{\rit}(\cP)=10$ that cannot be obtained by $1$-extension.

\begin{lemma}
  \label{lemma_ilp_triangle}
  The maximum target value of the following integer linear program (ILP) is an upper bound for $S_{\rit}(n)$, where $N=\{1,\dots,n\}$:
  \begin{eqnarray*}
    \max \sum_{S\subseteq N:\#S =3} x_S\quad\text{subject to}\\
    \sum_{S\subseteq T:\#S=3} x_S \le S_{\rit}(\# T)\quad\forall T\subseteq N: 4\le \#T\le n-1\\
    -3y_{4,T}+\sum_{S\subseteq T:\#S=3} x_S \le 1 \quad\forall T\subseteq N: \#T =4\\
    {{t\!-\!1}\choose 2} \left(y_{t\!-\!1,R}-y_{t,T}\right)+\!\!\!\!\!\!\!\!\!\!\sum_{S\subseteq T:S\not\subseteq R,\#S=3}\!\!\!\!\!\!\!\!\!\! x_S \le {{t\!-\!1}\choose 2} \quad\forall R\subset T\subseteq N: \#T=t, \#R=t\!-\!1, 5\le t\le n\\
    x_S\in\{0,1\} \quad\forall S\subseteq N:\#S=3\\ 
    y_{\#T,T}\in\{0,1\} \quad\forall T\subseteq N:4\le \#T\le n
  \end{eqnarray*}    
\end{lemma}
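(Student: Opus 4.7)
The plan is to prove the lemma by constructing, for each $n$-point set $\cP$, a feasible ILP solution whose objective value equals $S_{\rit}(\cP)$. Since such a solution exists for every $\cP$ with $\#\cP = n$, the ILP optimum will be at least $S_{\rit}(\cP)$ for every such $\cP$, and therefore at least the maximum $S_{\rit}(n)$, which is the claimed upper bound.

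Given $\cP = \{p_1,\dots,p_n\}$ labeled by $N = \{1,\dots,n\}$, I would set $x_S = 1$ exactly when $\{p_i : i \in S\}$ spans an isosceles right triangle, and $y_{t,T} = 1$ exactly when $\{p_i : i \in T\}$ is 1-extension obtainable in the sense of Proposition~\ref{proposition_1_extension_obtainable}. By construction $\sum_{\#S = 3} x_S = S_{\rit}(\cP)$, and constraint~(1) is simply the definition of $S_{\rit}(\#T)$ applied to the subset of $\cP$ indexed by $T$.

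For constraint~(2), the key observation is that every 4-point set containing two or more isosceles right triangles is already 1-extension obtainable: on only four vertices, two distinct triangles must share at least two points, so the 4-point set is obtained from one of the two triangles by adjoining the fourth vertex via the other. Hence $y_{4,T} = 1$ whenever $\sum_{\#S = 3, S \subseteq T} x_S \ge 2$, and the inequality $\sum x_S \le 1 + 3 y_{4,T}$ is either the bound $\sum \le 1$ (which then holds) or the bound $\sum \le 4$ (which always holds since a 4-point set spans at most $\binom{4}{3} = 4$ triangles).

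For constraint~(3), write $T = R \cup \{v\}$ so that $\sum_{S \subseteq T, S \not\subseteq R, \#S = 3} x_S$ counts the isosceles right triangles of $T$ containing $p_v$, a quantity bounded by the number $\binom{t-1}{2}$ of pairs in $R$. If $y_{t-1,R} = 0$ the inequality reads $\sum \le \binom{t-1}{2}(1 + y_{t,T})$, which is trivial. If instead $y_{t-1,R} = 1$ and some triangle $\{p_v,p_a,p_b\}$ with $a,b \in R$ is present, then $\{p_i : i \in T\}$ arises from $\{p_i : i \in R\}$ by one further 1-extension, forcing $y_{t,T} = 1$ and reducing the inequality to a triviality. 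I expect the only substantive step to be the base-case argument for constraint~(2); the propagation constraint~(3) merely encodes the recursive definition of 1-extension obtainability under adding one vertex, and (1) is definitional.
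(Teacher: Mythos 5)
Your proposal is correct and follows essentially the same route as the paper: the same feasible assignment ($x_S$ indicating isosceles right triangles, $y_{\#T,T}$ indicating $1$-extension obtainability), the same definitional justification of the first constraint family, the same two-triangles-share-a-pair argument for the $4$-point constraints, and the same case analysis on $y_{t-1,R}$ for the propagation constraints. The only cosmetic difference is that you spell out the base-case observation for $\#T=4$ slightly more explicitly than the paper does.
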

\begin{proof}
  Given an $n$-point set $\cP$ we set $x_S=1$ iff the three elements in $S$ form the vertices of an isosceles right triangle in $\cP$. For each $T\subseteq N$ with $4\le \#T\le n$ 
  we set $y_{\# T,T}=1$ iff the restriction of $\cP$ to $T$ can by obtained by $1$-extension. With this, the first set of inequalities is automatically satisfied. For 
  the second set of inequalities we remark that either at most one of the four $3$-sets $S\subset T$ forms the vertices of an isosceles right triangle or the restriction of 
  $\cP$ to $T$ (of cardinality $4$) can be obtained by $1$-extension since there are at least two isosceles right triangles intersecting in an edge. For the third set of inequalities 
  we observe that $\sum_{S\subseteq T:S\not\subseteq R,\#S=3} x_S \le {{t-1}\choose 2}$. So, if $y_{t\!-\!1,R}=0$, then the inequality is automatically satisfied. 
  The same is true if $y_{t\!-\!1,R}=1$ and $\sum_{S\subseteq T:S\not\subseteq R,\#S=3}=0$. If $y_{t\!-\!1,R}=1$ and $\sum_{S\subseteq T:S\not\subseteq R,\#S=3}\ge 1$, then 
  the restriction of $\cP$ to $R$ can be obtained by $1$-extension and there exists an isosceles right triangle that intersects $R$ in two vertices, so that 
  also the restriction of $\cP$ to $T$ can be obtained by $1$-extension and $y_{t,T}$. In the latter case the inequality is also satisfied. To sum up, choosing the $x_S$ and 
  $y_{\#T,T}$ as stated initially yields a feasible solution of the ILP with target value $\sum_{S\subseteq N:\#S =3} x_S=S_{\rit}(\cP)$. Thus, the optimal target value 
  of the ILP is at least as large as $S_{\rit}(n)$.       
\end{proof}

\begin{proposition}$\,$
  \label{proposition_1_extension_obtainable_2}
  \begin{enumerate}
    \item[(1)] Each reduced $8$-point set $\cP$ with $S_{\rit}(\cP)\ge 14$ can be obtained by $1$-ext.\ and $S_{\rit}(8)=20$.\\[-7mm]
    \item[(2)] Each reduced $9$-point set $\cP$ with $S_{\rit}(\cP)\ge 18$ can be obtained by $1$-ext.\ and $S_{\rit}(9)=28$.
  \end{enumerate}
\end{proposition}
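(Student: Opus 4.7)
The proof adopts the same two-step template as Proposition~\ref{proposition_1_extension_obtainable}. From Table~\ref{table_iso_types_exhaustive_1_extension_triangles} one reads off that the maximum number of isosceles right triangles spanned by a $1$-extension obtainable point set is $20$ when $n=8$ and $28$ when $n=9$. Consequently, once the conditional claim (reduced sets meeting the threshold are $1$-extension obtainable) is established, the values $S_{\rit}(8)=20$ and $S_{\rit}(9)=28$ are immediate, since isolated vertices of a non-reduced set can always be discarded without decreasing $S_{\rit}$.

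For Part~(1), I would argue by contradiction: assume $\cP$ is a reduced $8$-point set with $S_{\rit}(\cP)\ge 14$ that is not $1$-extension obtainable, and let $\widetilde{\cP}\subsetneq\cP$ be a $1$-extension maximal subconfiguration. Lemma~\ref{lemma_one_less} forces $\#\widetilde{\cP}\le 6$. The pair-counting step of Proposition~\ref{proposition_1_extension_obtainable}(5) --- if every pair of points lies in at most one isosceles right triangle then $S_{\rit}(\cP)\le\binom{8}{2}/3=9$ --- produces a pair lying in two triangles and hence a $1$-extension subconfiguration of size four, so $\#\widetilde{\cP}\in\{4,5,6\}$. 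Lemma~\ref{lemma_1_extension_maximal_rit} dispatches $\#\widetilde{\cP}=6$ outright, since $S_{\rit}(\cP)\le 11+2+0=13<14$. The remaining cases $\#\widetilde{\cP}\in\{4,5\}$, where the lemma only gives $4+12+4=20$ and $8+6+1=15$, I would handle via Lemma~\ref{lemma_ilp_triangle}: adjoining the extra constraint $y_{8,N}=0$ (which encodes that $\cP$ itself is not $1$-extension obtainable) and plugging in the already-known values $S_{\rit}(k)$ for $k\le 7$ yields an integer program whose optimum should drop to at most $13$, contradicting $S_{\rit}(\cP)\ge 14$.

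For Part~(2) the same scheme runs with $n=9$, this time using the value $S_{\rit}(8)=20$ just established. Lemma~\ref{lemma_one_less} gives $\#\widetilde{\cP}\le 7$, the pair-counting argument ($\binom{9}{2}/3=12<18$) forces a pair in two triangles and hence $\#\widetilde{\cP}\ge 4$. Lemma~\ref{lemma_1_extension_maximal_rit} directly kills $\#\widetilde{\cP}=7$ (bound $15+2+0=17<18$), while $\#\widetilde{\cP}=6$ gives the tight bound $11+6+1=18$; this case I would resolve by the attainment argument from the end of the proof of Proposition~\ref{proposition_1_extension_obtainable} --- if the bound is attained, the three points of $\cP\setminus\widetilde{\cP}$ must themselves span a triangle and each of its edges must lie in two further triangles reaching into $\widetilde{\cP}$, producing a $1$-extension subconfiguration of size seven and contradicting maximality. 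The final cases $\#\widetilde{\cP}\in\{4,5\}$ are again delegated to Lemma~\ref{lemma_ilp_triangle} with $y_{9,N}=0$ imposed.

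The main obstacle I anticipate is computational rather than conceptual: the ILP of Lemma~\ref{lemma_ilp_triangle} carries $\binom{n}{3}$ triangle indicators together with roughly $\sum_{k=4}^{n}\binom{n}{k}$ indicator variables $y_{\#T,T}$ and nested subset constraints, so even the $n=9$ instance is sizeable and must be solved carefully with a branch-and-bound solver. A subtler difficulty is that Lemma~\ref{lemma_ilp_triangle} only encodes necessary combinatorial conditions for $1$-extension obtainability; if the bound it returns in the $\#\widetilde{\cP}=4$ case exceeds the threshold, I would have to strengthen the formulation by supplementary valid inequalities (for instance, reflecting that at most two of the six possible completions depicted in Figure~\ref{fig_1_extension_line} can coexist for any given pair) or supplement it with a case split over the similarity type of $\widetilde{\cP}$, whose full list is available from Table~\ref{table_iso_types_exhaustive_1_extension_triangles}.
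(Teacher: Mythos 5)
Your skeleton matches the paper's: read the attained maxima off Table~\ref{table_iso_types_exhaustive_1_extension_triangles}, argue by contradiction via a $1$-extension maximal subconfiguration $\widetilde{\cP}$, kill the large cases with Lemma~\ref{lemma_one_less} and Lemma~\ref{lemma_1_extension_maximal_rit}, and delegate the small cases to the ILP of Lemma~\ref{lemma_ilp_triangle}. The gap is in the ILP step. Adding only $y_{8,N}=0$ (resp.\ $y_{9,N}=0$) is demonstrably too weak: the ILP does not encode reducedness, so the assignment corresponding to an extremal $7$-point configuration together with an isolated eighth point is feasible (no triangle uses the extra point, so the propagation constraints never force $y_{8,N}=1$) and has objective value $S_{\rit}(7)=15\ge 14$; for $n=9$ the analogous solution has value $S_{\rit}(8)=20\ge 18$. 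So the modified ILP you propose cannot return a bound of $13$ (resp.\ $17$), and the contradiction never materializes. The paper's fix is exactly the strengthening you only gesture at: since the explicit case analysis plus Lemma~\ref{lemma_one_less} (which is where reducedness enters) shows that \emph{no} subconfiguration of size $6,7,8$ (resp.\ $7,8,9$) can be obtained by $1$-extension, one may impose $y_{\#T,T}=0$ for \emph{all} such $T$, and additionally, from the deduction $S_{\rit}(\widetilde{\cP})=8$ when $\#\widetilde{\cP}=5$ (resp.\ $=11$ when $\#\widetilde{\cP}=6$), the constraint $-8y_{5,T}+\sum_{S\subseteq T}x_S\ge 0$ (resp.\ $-11y_{6,T}+\sum_{S\subseteq T}x_S\ge 0$). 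Only with these does the optimum drop to $12$ (resp.\ $17$).

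A secondary issue: your combinatorial resolution of the case $\#\widetilde{\cP}=6$ for $n=9$ overstates what the attainment argument yields. If the bound $11+6+1=18$ is attained, the three outside points span a triangle and each of their three pairs lies in two further triangles whose third vertices lie in $\widetilde{\cP}$, but those six third-vertex slots need not be distinct across pairs; the argument only guarantees a $1$-extension obtainable subconfiguration of cardinality at least $5$ containing the three outside points, and a maximal subconfiguration of size $5$ or $6$ is not excluded at that stage. The paper does not attempt this combinatorial shortcut and instead folds the $\#\widetilde{\cP}=6$ case into the ILP via the $-11y_{6,T}$ constraint. (There is also a harmless discrepancy in the paper itself, which assumes $S_{\rit}(\cP)\ge 19$ in the text of part (2) while the statement requires only $\ge 18$; the computed ILP optimum $17$ covers the stated threshold.)
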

\begin{proof}
  Given the data of Table~\ref{table_iso_types_exhaustive_1_extension_triangles} it suffices to show that $\cP$ can be obtained by $1$-extension 
  if $S_{\rit}(\cP)$ is at least as large as proposed.
  
  Assume that $\cP$ is a reduced $8$-point set that cannot be obtained by $1$-extension and satisfies $S_{\rit}(\cP)\ge 14$. Let $\widetilde{\cP}$ be a $1$-extension maximal 
  subconfiguration of $\cP$, so that Lemma~\ref{lemma_one_less} implies $\#\widetilde{\cP}\le 6$. If $\#\widetilde{\cP}=6$, then Lemma~\ref{lemma_1_extension_maximal_rit} 
  implies $S_{\rit}(\cP)\le S_{\rit}(6)+2\cdot{2\choose 2}+S_{\rit}(2)=13$, which is a contradiction. If $\#\widetilde{\cP}=5$, then Lemma~\ref{lemma_1_extension_maximal_rit} 
  implies $S_{\rit}(\cP)\le S_{\rit}(\widetilde{\cP})+2\cdot{3\choose 2}+S_{\rit}(3)=S_{\rit}(\widetilde{\cP})+7$, so that $S_{\rit}(\widetilde{\cP})\ge 7$. From 
  Proposition~\ref{proposition_1_extension_obtainable}.(3) and Table~\ref{table_iso_types_exhaustive_1_extension_triangles} we conclude $S_{\rit}(\widetilde{\cP})=8$. 
  Next we apply the ILP from Lemma~\ref{lemma_ilp_triangle} with $n=8$ and the additional inequalities $y_{\#T,T}=0$ for all $T\subseteq N$ with $\#T\in\{6,7,8\}$ and 
  $$
    -8y_{5,T}+\sum_{S\subseteq T:\#S=3} x_S \ge 0  
  $$
  for all $T\subseteq N$ with $\#T=5$. We can computationally check that the optimal target value of this modified ILP is $12$, so that statement (1) follows.  
  
  Assume that $\cP$ is a reduced $9$-point set that cannot be obtained by $1$-extension and satisfies $S_{\rit}(\cP)\ge 19$. Let $\widetilde{\cP}$ be a $1$-extension maximal 
  subconfiguration of $\cP$, so that Lemma~\ref{lemma_one_less} implies $\#\widetilde{\cP}\le 7$. If $\#\widetilde{\cP}=7$, then Lemma~\ref{lemma_1_extension_maximal_rit} 
  implies $S_{\rit}(\cP)\le S_{\rit}(7)+2\cdot{2\choose 2}+S_{\rit}(2)=17$, which is a contradiction. If $\#\widetilde{\cP}=6$, then Lemma~\ref{lemma_1_extension_maximal_rit} 
  implies $S_{\rit}(\cP)\le S_{\rit}(\widetilde{\cP})+2\cdot{3\choose 2}+S_{\rit}(3)=S_{\rit}(\widetilde{\cP})+7$, so that $S_{\rit}(\widetilde{\cP})=11$. 
  Next we apply the ILP from Lemma~\ref{lemma_ilp_triangle} with $n=9$ and the additional inequalities $y_{\#T,T}=0$ for all $T\subseteq N$ with $\#T\in\{7,8,9\}$ and 
  $$
    -11y_{6,T}+\sum_{S\subseteq T:\#S=3} x_S \ge 0  
  $$
  for all $T\subseteq N$ with $\#T=6$.  
  We can computationally check that the optimal target value of this modified ILP is $17$, so that statement (2) follows. 
\end{proof}

\begin{proposition}
  Each $10$-point set $\cP$ with $S_{\rit}(\cP)\ge 25$ can be obtained by $1$-extension from a subconfiguration $\cP'$ with $\#\cP'\le 9$, $S_{\rit}(\cP')\ge 18$ that can be 
  obtained by $1$-extension. Moreover, we have $S_{\rit}(10)=35$. 
\end{proposition}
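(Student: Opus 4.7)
The proof follows the template of Proposition~\ref{proposition_1_extension_obtainable_2}. We may assume $\cP$ is reduced, since a $1$-extension obtainable configuration cannot contain an isolated vertex. Let $\widetilde{\cP}\subseteq\cP$ be a $1$-extension maximal subconfiguration. The first task is to show $\widetilde{\cP}=\cP$. If $\widetilde{\cP}\neq\cP$, then reducedness combined with $1$-extension maximality rules out $\#\widetilde{\cP}=9$: the lone vertex of $\cP\setminus\widetilde{\cP}$ would lie in no isosceles right triangle with two vertices of $\widetilde{\cP}$ by maximality, and has no other vertex of $\cP$ to form one with, contradicting reducedness. Hence $\#\widetilde{\cP}\le 8$. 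For $\#\widetilde{\cP}\in\{7,8\}$, Lemma~\ref{lemma_1_extension_maximal_rit} yields
\[
S_{\rit}(\cP)\le\max\!\left\{S_{\rit}(8)+2\binom{2}{2}+S_{\rit}(2),\; S_{\rit}(7)+2\binom{3}{2}+S_{\rit}(3)\right\}=\max\{22,22\}=22,
\]
which contradicts $S_{\rit}(\cP)\ge 25$.

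For the remaining cases $\#\widetilde{\cP}\in\{3,4,5,6\}$ the direct bound of Lemma~\ref{lemma_1_extension_maximal_rit} is too weak, and I would apply the ILP of Lemma~\ref{lemma_ilp_triangle} with $n=10$, augmented by the constraints $y_{\#T,T}=0$ for all $T\subseteq N$ with $\#T\ge 7$ (forbidding any subconfiguration of size at least~$7$ from being $1$-extension obtainable) together with the forcing inequalities
\[
-S_{\rit}(t)\,y_{t,T}+\sum_{S\subseteq T:\,\#S=3}x_S\ge 0\qquad\forall\, T\subseteq N,\ \#T=t\in\{4,5,6\},
\]
mirroring the construction in the proof of Proposition~\ref{proposition_1_extension_obtainable_2}. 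I expect the optimal target value of this strengthened ILP to be at most $24$ in each case, providing the required contradiction, so that $\widetilde{\cP}=\cP$ and $\cP$ is $1$-extension obtainable.

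To locate a suitable $\cP'$, I would use a double counting / averaging argument: since each isosceles right triangle meets exactly three vertices of $\cP$, $\sum_{p\in\cP}\deg(p)=3S_{\rit}(\cP)\ge 75$, so some $p\in\cP$ satisfies $\deg(p)\le 7$. Set $\cP_0:=\cP\setminus\{p\}$; then $\#\cP_0=9$ and $S_{\rit}(\cP_0)\ge 25-7=18$, while reducedness of $\cP$ supplies an isosceles right triangle through $p$ whose other two vertices lie in $\cP_0$, making $\cP$ a $1$-extension of $\cP_0$. If $\cP_0$ is itself reduced, Proposition~\ref{proposition_1_extension_obtainable_2}(2) implies that $\cP_0$ is $1$-extension obtainable and the structural claim follows. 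If $\cP_0$ fails to be reduced, any vertex $q\in\cP_0$ isolated in $\cP_0$ satisfies $\deg_\cP(q)\le\deg_\cP(p)\le 7$; a short case analysis — exchanging the roles of $p$ and $q$, or extracting a reduced $8$-point subset $\cP\setminus\{p,q\}$ with $\ge 18$ triangles and invoking Proposition~\ref{proposition_1_extension_obtainable_2}(1) before re-adding $p$ and $q$ in the correct order — restores the desired structure. I expect this reducedness bookkeeping, together with the ILP computation above, to be the principal technical obstacles.

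For the identity $S_{\rit}(10)=35$, the lower bound is certified by an explicit $10$-point configuration listed in the appendix. For the matching upper bound, any $\cP$ with $S_{\rit}(\cP)\ge 36$ would by the structural part above be obtained by $1$-extension from some $1$-extension obtainable $9$-point $\cP'$ with $S_{\rit}(\cP')\ge 18$; an exhaustive enumeration using the data underlying Table~\ref{table_iso_types_exhaustive_1_extension_triangles} — extended one step further to $n=10$ by considering all admissible $1$-extensions of the $17$ candidate configurations with $S_{\rit}(\cP')\ge 18$ — then shows that $36$ triangles are unreachable, so $S_{\rit}(10)\le 35$.
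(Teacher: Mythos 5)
Your overall blueprint matches the paper's -- averaging to extract a $9$-point subconfiguration with at least $18$ triangles, Proposition~\ref{proposition_1_extension_obtainable_2} to make it $1$-extension obtainable, a computational extension step for the value $35$, and Lemma~\ref{lemma_1_extension_maximal_rit} to exclude small maximal subconfigurations -- but you run the two halves in the wrong order, and this creates a real gap. By attacking $\widetilde{\cP}=\cP$ before you know anything about the size of a $1$-extension subconfiguration, you are forced to dispose of the cases $\#\widetilde{\cP}\in\{3,4,5,6\}$, and you delegate these to an unverified ILP. Worse, the forcing inequalities you propose, $-S_{\rit}(t)\,y_{t,T}+\sum_{S}x_S\ge 0$, are not valid: they assert that every $1$-extension obtainable $t$-subset spans the maximum possible number $S_{\rit}(t)$ of triangles, which is false already for $t=4$ (Table~\ref{table_iso_types_exhaustive_1_extension_triangles} lists obtainable $4$-point sets with only $2$ triangles, so the true incidence vector gives $-4+2<0$). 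An ILP whose added constraints are violated by the assignment coming from an actual point set no longer yields an upper bound. The paper's Proposition~\ref{proposition_1_extension_obtainable_2} only gets away with the coefficients $8$ and $11$ because its preceding case analysis pins down the exact triangle count of the relevant subconfigurations first.

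The paper avoids all of this by reversing the order: it first produces a reduced subconfiguration $\cP'$ with $\#\cP'\in\{8,9\}$ and $S_{\rit}(\cP')\ge 18$ (your degree-averaging step is equivalent to Lemma~\ref{lemma_avg_t}; the reducedness bookkeeping collapses to deleting isolated vertices and observing that a $7$-point set cannot carry $18$ triangles, so at most one vertex is deleted), invokes Proposition~\ref{proposition_1_extension_obtainable_2} to see that $\cP'$ is obtainable, and only then chooses $\widetilde{\cP}$ to be a $1$-extension maximal subconfiguration \emph{containing} $\cP'$. This gives $\#\widetilde{\cP}\ge 8$ for free; $\#\widetilde{\cP}=9$ is excluded by Lemma~\ref{lemma_one_less} and $\#\widetilde{\cP}=8$ by Lemma~\ref{lemma_1_extension_maximal_rit} (which gives $\le 22<25$) -- exactly the two computations you did carry out correctly. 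No ILP is needed. A further small slip: the enumeration certifying $S_{\rit}(10)\le 35$ must start from all obtainable $8$- and $9$-point configurations with at least $18$ triangles, of which Table~\ref{table_iso_types_exhaustive_1_extension_triangles} records roughly three thousand, not $17$.
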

\begin{proof}
  From Lemma~\ref{lemma_avg_t} we conclude the existence of a subconfiguration $\cP''\subset\cP$ with $\#\cP''=9$ and $S_{\rit}(\cP'')\ge 18$. Now let $\cP'$ arise from 
  $\cP''$ by removing isolated vertices, so that $\cP'$ is reduced and $S_{\rit}(\cP)'\ge 18$. Proposition~\ref{proposition_1_extension_obtainable_2} then 
  yields that $\cP'$ can be obtained by $1$-extension and $\#\cP'\in\{8,9\}$.

  We recursively apply $1$-extension starting from all point sets $\cP'$ with $\#\cP'\in\{8,9\}$ and $S_{\rit}(\cP')\ge 18$ that can be obtained by $1$-extension till cardinality 
  $n=10$ is reached for $\cP$. This computation yields $S_{\rit}(10)\ge 35$ and none of  the constructed point sets strictly improves upon this lower bound.
  
  Now let $\widetilde{\cP}$ be a $1$-extension maximal subconfiguration of $\cP$ with $\cP'\subseteq \widetilde{\cP}$. From Lemma~\ref{lemma_1_extension_maximal_rit} and 
  Lemma~\ref{lemma_one_less} we conclude $\#\widetilde{\cP}=\#\cP$, so that $S_{\rit}(10)=35$.
\end{proof}
Using the same technique we prove the following four statements:
\begin{proposition}
  Each $11$-point set $\cP$ with $S_{\rit}(\cP)\ge 34$ can be obtained by $1$-extension from a subconfiguration $\cP'$ with $\#\cP'\le 10$, $S_{\rit}(\cP')\ge 25$ that 
  can be obtained by $1$-extension. Moreover, we have $S_{\rit}(11)=43$.
\end{proposition}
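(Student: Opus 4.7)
The plan is to adapt the template from the preceding proposition on $S_{\rit}(10)=35$ with one higher cardinality. First, I would apply Lemma~\ref{lemma_avg_t} with $n=11$ and $k=10$: the hypothesis $S_{\rit}(\cP)\ge 34$ gives $34\le\lfloor 11 b/8\rfloor$, forcing $b\ge 25$, so some $10$-point subset $\cP''\subseteq\cP$ has $S_{\rit}(\cP'')\ge 25$. Removing any vertex of $\cP''$ that lies in no isosceles right triangle yields a reduced subconfiguration $\cP'\subseteq\cP''$ with $S_{\rit}(\cP')=S_{\rit}(\cP'')\ge 25$; since $S_{\rit}(8)=20$, necessarily $\#\cP'\in\{9,10\}$. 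The preceding proposition (for $\#\cP'=10$) or Proposition~\ref{proposition_1_extension_obtainable_2} (for $\#\cP'=9$) then guarantees that $\cP'$ is obtainable by $1$-extension.

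To upgrade this to the stronger assertion that $\cP$ itself arises from $\cP'$ by $1$-extensions, I would pick a $1$-extension maximal subconfiguration $\widetilde{\cP}$ of $\cP$ containing $\cP'$. Note first that every $\cP$ with $S_{\rit}(\cP)\ge 43$ must be reduced, because $S_{\rit}(10)=35<43$ rules out discarding an isolated vertex. Lemma~\ref{lemma_one_less} then excludes $\#\widetilde{\cP}=10$, while $\#\widetilde{\cP}=9$ is excluded by Lemma~\ref{lemma_1_extension_maximal_rit}, which gives $S_{\rit}(\cP)\le S_{\rit}(9)+2\binom{2}{2}+S_{\rit}(2)=30<34$. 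Hence $\widetilde{\cP}=\cP$, so $\cP$ is $1$-extension obtainable via a chain extending $\cP'$.

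The lower bound $S_{\rit}(11)\ge 43$ and the matching upper bound both come from a single exhaustive recursive $1$-extension enumeration: starting from every similarity-class representative of a reduced $1$-extension-obtainable configuration with $\#\cP'\in\{9,10\}$ and $S_{\rit}(\cP')\ge 25$, I apply all legal $1$-extensions up to cardinality $11$ and record the maximum of $S_{\rit}$ achieved. By the structural argument above, every $11$-point $\cP$ with $S_{\rit}(\cP)\ge 34$ -- in particular every candidate for the conjectured optimum $43$ -- is captured by this enumeration, so the observed maximum simultaneously supplies both bounds.

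The main obstacle will be keeping the computation tractable: Table~\ref{table_iso_types_exhaustive_1_extension_triangles} already lists more than one million similarity classes at $n=9$, and two further rounds of $1$-extension risk a genuine combinatorial blow-up. I would mitigate this through aggressive pruning (discarding branches whose current triangle count cannot plausibly reach $43$ within the remaining extensions) and a reliable canonical form for similarity testing to collapse duplicates early. As an independent cross-check on the upper bound $43$, I would also rerun the ILP of Lemma~\ref{lemma_ilp_triangle} with $n=11$, augmented by constraints $y_{\#T,T}=0$ for all $T$ with $\#T\in\{7,8,9,10,11\}$ together with the corresponding linking inequalities in the style of Proposition~\ref{proposition_1_extension_obtainable_2}, and verify that its optimal value does not exceed $33$ -- thereby confirming that no $11$-point set escapes the enumeration.
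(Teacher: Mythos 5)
Your proposal follows the paper's own proof essentially verbatim: the paper handles this proposition "using the same technique" as the $S_{\rit}(10)=35$ case, namely Lemma~\ref{lemma_avg_t} to extract a $10$-point subconfiguration with at least $25$ triangles, removal of isolated vertices, Proposition~\ref{proposition_1_extension_obtainable_2} and the preceding proposition for $1$-extension obtainability of $\cP'$, an exhaustive recursive $1$-extension enumeration to obtain the value $43$, and Lemmas~\ref{lemma_1_extension_maximal_rit} and~\ref{lemma_one_less} to force $\#\widetilde{\cP}=\#\cP$. Your additional ILP cross-check and the pruning discussion are unnecessary (the enumeration only starts from the few $9$- and $10$-point sets with at least $25$ triangles, not from all similarity classes), but they are harmless, and your explicit remark on when reducedness of $\cP$ is guaranteed is if anything slightly more careful than the paper.
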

\begin{proposition}
  Each $12$-point set $\cP$ with $S_{\rit}(\cP)\ge 45$ can be obtained by $1$-extension from a subconfiguration $\cP'$ with $\#\cP'\le 11$, $S_{\rit}(\cP')\ge 34$ that 
  can be obtained by $1$-extension. Moreover, we have $S_{\rit}(12)=52$.
\end{proposition}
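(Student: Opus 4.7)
The plan is to mirror the strategy used for the $10$- and $11$-point cases, combining a sharp averaging argument with the previous propositions and a direct application of Lemma~\ref{lemma_1_extension_maximal_rit} to rule out small maximal subconfigurations, then closing with a recursive $1$-extension enumeration.

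First, I would apply Lemma~\ref{lemma_avg_t} with $n=12$ and $k=11$. The ratio $\binom{12}{3}/\binom{11}{3}=4/3$ yields $\lfloor 33\cdot 4/3\rfloor=44$, so the hypothesis $S_{\rit}(\cP)\ge 45$ forces the existence of an $11$-subset $\cP''\subseteq\cP$ with $S_{\rit}(\cP'')\ge 34$. Note also that since $S_{\rit}(11)=43\ge 45$ is false, $\cP$ itself must be reduced (otherwise stripping an isolated vertex leaves a $\le 11$-point set still carrying $\ge 45$ triangles, contradicting the previously established value of $S_{\rit}(11)$). Stripping isolated vertices from $\cP''$ produces a reduced subconfiguration $\cP'\subseteq \cP''$ with $S_{\rit}(\cP')\ge 34$, and because $S_{\rit}(9)=28<34$ we must have $\#\cP'\in\{10,11\}$. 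The previous two propositions then imply that $\cP'$ itself can be obtained by $1$-extension (using $S_{\rit}(\cP')\ge 25$ in the $\#\cP'=10$ case).

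Next, I would lift this to $\cP$ by picking a $1$-extension maximal subconfiguration $\widetilde{\cP}$ with $\cP'\subseteq\widetilde{\cP}\subseteq\cP$ and ruling out $\#\widetilde{\cP}<12$ via Lemma~\ref{lemma_1_extension_maximal_rit}. The case $\#\widetilde{\cP}=11$ gives
\[
S_{\rit}(\cP)\le S_{\rit}(11)+2\binom{1}{2}+S_{\rit}(1)=43<45,
\]
while $\#\widetilde{\cP}=10$ gives
\[
S_{\rit}(\cP)\le S_{\rit}(10)+2\binom{2}{2}+S_{\rit}(2)=35+2+0=37<45,
\]
and smaller values of $\#\widetilde{\cP}$ are strictly worse. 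Hence $\widetilde{\cP}=\cP$, proving the first assertion of the proposition.

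For the exact value $S_{\rit}(12)=52$, I would carry out the recursive $1$-extension enumeration: start from every point set $\cP'$ with $\#\cP'\in\{10,11\}$ and $S_{\rit}(\cP')\ge 34$ that was produced in the proofs of the preceding propositions, and exhaustively apply $1$-extensions up to cardinality $12$. This simultaneously furnishes the lower bound $S_{\rit}(12)\ge 52$ via an explicit witness and the upper bound, since every $12$-point set with $S_{\rit}\ge 45$ is, by the preceding argument, reached by this enumeration, while all remaining $12$-point sets satisfy $S_{\rit}(\cP)\le 44<52$ trivially. The main obstacle I expect is purely computational: controlling the size of the enumeration tree, which is why it is essential that the preceding propositions drastically restrict the set of starting configurations and that isomorph rejection be handled carefully; the mathematical content beyond this is only the tight averaging check $\lfloor 220\cdot 33/165\rfloor=44$ and the two numeric applications of Lemma~\ref{lemma_1_extension_maximal_rit} above.
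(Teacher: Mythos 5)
Your proposal is correct and follows essentially the same route as the paper, which proves this proposition by the exact technique used for the $10$-point case: averaging via Lemma~\ref{lemma_avg_t} to find an $11$-subset with at least $34$ triangles, reducing and invoking the preceding propositions, ruling out a proper $1$-extension maximal subconfiguration via Lemma~\ref{lemma_1_extension_maximal_rit} (the paper also cites Lemma~\ref{lemma_one_less} here, but your direct bound for $\#\widetilde{\cP}=11$ works just as well), and finishing with the recursive $1$-extension enumeration. All your numerical checks ($\lfloor 33\cdot 4/3\rfloor=44$, $43<45$, $37<45$) are correct.
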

\begin{proposition}
  Each $13$-point set $\cP$ with $S_{\rit}(\cP)\ge 58$ can be obtained by $1$-extension from a subconfiguration $\cP'$ with $\#\cP'\le 12$, $S_{\rit}(\cP')\ge 45$ that 
  can be obtained by $1$-extension. Moreover, we have $S_{\rit}(13)=64$.
\end{proposition}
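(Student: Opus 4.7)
The argument will mirror the proof pattern of the preceding propositions for $n=10,11,12$, combining the averaging argument of Lemma~\ref{lemma_avg_t} with the $n=12$ case and finishing by a recursive computational enumeration.

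First I would locate a $12$-vertex $1$-extension subconfiguration. Applying Lemma~\ref{lemma_avg_t} with $k=12$, if every $12$-subset of $\cP$ satisfied $S_{\rit}\le 44$, then
$$S_{\rit}(\cP)\le \left\lfloor\frac{\binom{13}{3}\cdot 44}{\binom{12}{3}}\right\rfloor=\lfloor 57.2\rfloor=57<58,$$
a contradiction. So some $\cP''\subset\cP$ with $\#\cP''=12$ satisfies $S_{\rit}(\cP'')\ge 45$. Removing isolated vertices of $\cP''$ yields a reduced set $\cP'$ with $S_{\rit}(\cP')=S_{\rit}(\cP'')\ge 45$. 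Since $S_{\rit}(11)=43<45$ from the previous proposition, necessarily $\#\cP'=12$, and applying the $n=12$ proposition to $\cP'$ shows $\cP'$ itself can be obtained by $1$-extension.

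Next I would show that $\cP$ itself is obtained by $1$-extension from $\cP'$. Take a $1$-extension maximal subconfiguration $\widetilde{\cP}$ with $\cP'\subseteq\widetilde{\cP}\subseteq\cP$, so $\#\widetilde{\cP}\in\{12,13\}$. If $\#\widetilde{\cP}=12$, Lemma~\ref{lemma_1_extension_maximal_rit} gives
$$S_{\rit}(\cP)\le S_{\rit}(12)+2\tbinom{1}{2}+S_{\rit}(1)=52+0+0=52<58,$$
a contradiction. Hence $\#\widetilde{\cP}=13$ and $\cP$ is obtained by $1$-extension from $\cP'$.

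Finally, to determine $S_{\rit}(13)=64$, I would carry out the computational step: enumerate up to similarity all reduced $12$-point sets $\cP'$ obtainable by $1$-extension with $S_{\rit}(\cP')\ge 45$ (a finite set produced by the same recursive generation used to build Table~\ref{table_iso_types_exhaustive_1_extension_triangles} and extended through $n=12$), and for each such $\cP'$ perform every $1$-extension to cardinality $13$. One such extension realises $64$ triangles, giving the lower bound, while by the first two steps every $\cP$ with $S_{\rit}(\cP)\ge 58$ occurs in this enumeration, giving the matching upper bound. The main obstacle is practical rather than conceptual: the number of similarity classes of $1$-extension obtainable sets at $n=11,12$ with many triangles is large, so the search is only feasible with aggressive pruning (discarding branches that cannot yet reach the target count and canonicalising similarity classes efficiently), exactly as in the preceding cases.
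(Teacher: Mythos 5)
Your proposal is correct and follows essentially the same route as the paper: the paper proves this proposition "using the same technique" as the $n=10$ case, namely Lemma~\ref{lemma_avg_t} with $k=n-1$ to extract a $12$-point subconfiguration with at least $45$ triangles, the preceding proposition to see it is $1$-extension obtainable, Lemma~\ref{lemma_1_extension_maximal_rit} to force a $1$-extension maximal subconfiguration to be all of $\cP$, and an exhaustive computational $1$-extension from the relevant $12$-point sets to certify $S_{\rit}(13)=64$. The only cosmetic difference is that you rule out $\#\widetilde{\cP}=12$ purely via the numerical bound $52<58$ rather than also invoking Lemma~\ref{lemma_one_less}, which is equally valid here.
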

\begin{proposition}
  Each $14$-point set $\cP$ with $S_{\rit}(\cP)\ge 73$ can be obtained by $1$-extension from a subconfiguration $\cP'$ with $\#\cP'\le 13$, $S_{\rit}(\cP')\ge 58$ that 
  can be obtained by $1$-extension. Moreover, we have $S_{\rit}(14)=74$.
\end{proposition}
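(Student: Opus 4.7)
My plan mirrors the inductive averaging-plus-enumeration argument used for $n=10,11,12,13$. First I would invoke Lemma~\ref{lemma_avg_t} with $k=13$: if every $13$-subset $\cP''\subseteq\cP$ satisfied $S_{\rit}(\cP'')\le 57$, then
$$
  S_{\rit}(\cP) \;\le\; \left\lfloor \frac{14\cdot 13\cdot 12}{13\cdot 12\cdot 11}\cdot 57\right\rfloor \;=\; \left\lfloor \frac{14\cdot 57}{11}\right\rfloor \;=\; 72,
$$
contradicting $S_{\rit}(\cP)\ge 73$. Hence some $13$-subset $\cP''$ has $S_{\rit}(\cP'')\ge 58$. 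Deleting isolated vertices from $\cP''$ produces a reduced point set $\cP'$; since $S_{\rit}(12)=52<58$, no vertex can actually be deleted, so $\#\cP'=13$. The preceding proposition for $n=13$ then guarantees that $\cP'$ itself is obtainable by $1$-extension.

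Next I would run the computational $1$-extension recursion exactly as in the smaller cases: starting from every similarity class of reduced $13$-point configurations with $S_{\rit}\ge 58$ (these already arose during the $n=13$ enumeration), I enumerate every way to adjoin a fourteenth vertex via a $1$-extension step and record the resulting values of $S_{\rit}$. The two outputs I expect are (i) the maximum value of $S_{\rit}$ attained is exactly $74$, giving $S_{\rit}(14)\ge 74$, and (ii) in particular, no $1$-extension-obtainable $14$-point set carries more than $74$ isosceles right triangles, which is the ingredient needed for the upper bound.

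For the upper bound, let $\widetilde{\cP}$ be a $1$-extension maximal subconfiguration of $\cP$ containing the $\cP'$ above, so $\#\widetilde{\cP}\in\{13,14\}$. The case $\#\widetilde{\cP}=13$ is excluded by Lemma~\ref{lemma_1_extension_maximal_rit}:
$$
  S_{\rit}(\cP) \;\le\; S_{\rit}(13) + 2\binom{1}{2} + S_{\rit}(1) \;=\; 64 \;<\; 73.
$$
Therefore $\widetilde{\cP}=\cP$, so $\cP$ is itself obtainable by $1$-extension, and the enumeration of the previous paragraph forces $S_{\rit}(\cP)\le 74$, completing the proof.

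The principal obstacle is the combinatorial scale of the enumeration, not the structural argument. Already the table for $n=9$ records over a million $1$-extension similarity classes, and the number of relevant $13$-point seeds together with their one-step extensions is presumably much larger. Making the recursion run to completion requires careful similarity-class canonicalisation, aggressive pruning of branches whose triangle count cannot reach $74$, and exact rational (or, via Theorem~\ref{thm_grid_assumption}, integer) arithmetic on coordinates. No new structural ingredient beyond Lemma~\ref{lemma_avg_t}, Lemma~\ref{lemma_1_extension_maximal_rit}, and the preceding propositions is required.
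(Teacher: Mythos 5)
Your proposal is correct and follows essentially the same route as the paper, which proves this proposition "using the same technique" as the $n=10$ case: the averaging bound of Lemma~\ref{lemma_avg_t} to extract a $13$-subset with at least $58$ triangles, reduction and the $n=13$ proposition to make it $1$-extension obtainable, a computational one-step $1$-extension enumeration from all such seeds yielding the value $74$, and Lemma~\ref{lemma_1_extension_maximal_rit} (the paper also cites Lemma~\ref{lemma_one_less}) to rule out a $1$-extension maximal subconfiguration of size $13$ and conclude $\widetilde{\cP}=\cP$. All the numerical checks ($\lfloor 14\cdot 57/11\rfloor=72<73$ and $S_{\rit}(13)+2\binom{1}{2}+S_{\rit}(1)=64<73$) are right.
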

\begin{theorem}
  We have $S_{\rit}(10)=35$, $S_{\rit}(11)=43$, $S_{\rit}(12)=52$, $S_{\rit}(13)=64$, $S_{\rit}(14)=74$, and all similarity types of the corresponding extremal configurations 
  are listed in Appendix~\ref{appendix_dissimilar_blbk_point_sets_rit}.
\end{theorem}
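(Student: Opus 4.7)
The five numerical values $S_{\rit}(10)=35$, $S_{\rit}(11)=43$, $S_{\rit}(12)=52$, $S_{\rit}(13)=64$, and $S_{\rit}(14)=74$ are exactly the content of the five preceding propositions, so my plan is to treat the theorem as a summary of those propositions and concentrate the real work on producing the full list of similarity types of extremal configurations.

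I will run the same recursion that drives those propositions. Each of them shows that an extremal $n$-point set $\cP$ can be obtained by $1$-extension from a reduced subconfiguration $\cP' \subseteq \cP$ of cardinality at most $n-1$ whose value $S_{\rit}(\cP')$ lies above the corresponding threshold from the previous step, and $\cP'$ is itself obtainable by $1$-extension. Hence a list of all similarity types at level $n$ attaining $S_{\rit}(n)$ can be produced as follows: (i) take the stored list at level $n-1$ of all $1$-extension-obtainable configurations with $S_{\rit}$ above the relevant threshold, (ii) form every possible $1$-extension by appending one of the (up to) six third vertices indicated in Figure~\ref{fig_1_extension_line} for each pair of existing points, and (iii) discard duplicates up to similarity and those configurations with too few isosceles right triangles to reach the final maximum.

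Concretely, I would initialize the recursion from the entries of Table~\ref{table_iso_types_exhaustive_1_extension_triangles} at $n=9$ with $S_{\rit}(\cP) \geq 18$, and iteratively build the lists at $n=10,11,12,13,14$. At each step the maximum of $S_{\rit}$ over the stored set must coincide with the asserted $S_{\rit}(n)$ — that is the sanity check, and the configurations achieving equality form the required list of similarity types. Finiteness and effective computability of the search at every step are guaranteed by Theorem~\ref{thm_grid_assumption} together with Proposition~\ref{prop_grid_size_bound}, which embed every realizable configuration into an integer grid of controlled size; the pruning threshold at level $n-1$ needed to catch every level-$n$ extremal configuration is supplied by Lemma~\ref{lemma_avg_t} combined with the previous proposition, exactly as in the statement of each step of the recursion.

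The main obstacle is practical rather than conceptual: the intermediate lists grow rapidly, and one must test configurations for similarity (rotations, reflections, translations, and scalings) without missing duplicates or counting spurious copies. I would handle this by normalizing each point set in a canonical coordinate system — for instance, aligning a distinguished ordered pair of points (say, the pair realizing the minimum pairwise distance, with ties broken by a fixed combinatorial rule) to a fixed segment, and then writing the remaining points in a lexicographically sorted tuple — so that similarity-equivalent sets produce identical canonical forms. Once this canonicalization is in place the remainder of the argument is a finite verified computation whose output is precisely the list of similarity types recorded in Appendix~\ref{appendix_dissimilar_blbk_point_sets_rit}.
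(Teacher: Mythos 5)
Your proposal matches the paper's approach: the theorem carries no separate proof there, being the collected content of the five preceding propositions, each of which is established exactly as you describe — Lemma~\ref{lemma_avg_t} plus reduction plus Proposition~\ref{proposition_1_extension_obtainable_2} to locate a $1$-extension-obtainable subconfiguration above the threshold, an exhaustive $1$-extension recursion to enumerate candidates up to similarity, and the $1$-extension-maximality argument (Lemmas~\ref{lemma_1_extension_maximal_rit} and~\ref{lemma_one_less}) to certify that no extremal configuration is missed. The only cosmetic difference is that the paper seeds the recursion from the reduced subconfigurations of cardinality $8$ \emph{or} $9$ with $S_{\rit}\ge 18$ rather than only the $9$-point ones, but since any $1$-extension chain from an $8$-point seed passes through a $9$-point $1$-extension-obtainable set with at least as many triangles, your initialization is equivalent; also note that finiteness here comes directly from the explicit coordinates produced by $1$-extension rather than from Theorem~\ref{thm_grid_assumption}, which concerns squares.
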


While the bound of Lemma~\ref{lemma_avg_t} is too weak to continue in the same manner for $n>14$, we can recursively apply $1$-extension keeping the best 30000 point sets in each 
iteration only in order to obtain heuristic lower bounds for $S_{\rit}(n)$. These lower bounds are given in Table~\ref{table_lower_bounds_number_of_contained_irt} and the 
known similarity types of attaining configurations are listed in Appendix~\ref{appendix_dissimilar_blbk_point_sets_rit}. For $15\le n\le 25$ the lower bounds of 
\cite{abrego2011maximum} are matched.

\begin{table}[htp]
\begin{center}
\begin{tabular}{lllllllllllllllllll}
\hline
  $n$ &              15 & 16 & 17  & 18  & 19  & 20  & 21  & 22  & 23  & 24  & 25  & 26  & 27  \\
  $S_{\rit}(n)\ge$ & 85 & 97 & 112 & 124 & 139 & 156 & 176 & 192 & 210 & 229 & 252 & 271 & 291 \\ 
\hline
\end{tabular}
\begin{tabular}{lllllllllllllllllll}
\hline
  $n$ &              28  & 29  & 30  & 31  & 32  & 33  & 34  & 35  & 36  & 37  & 38  & 39  \\
  $S_{\rit}(n)\ge$ & 314 & 338 & 363 & 389 & 417 & 448 & 473 & 501 & 531 & 564 & 594 & 626 \\
\hline
\end{tabular}
\begin{tabular}{llllllllllllllllll}
\hline
  $n$ &              40  & 41  & 42  & 43  & 44  & 45  & 46  & 47  & 48  & 49   & 50   \\
  $S_{\rit}(n)\ge$ & 659 & 696 & 728 & 763 & 799 & 836 & 874 & 914 & 955 & 1000 & 1038 \\
\hline
\end{tabular}
\caption{Lower bounds for the number of isosceles right triangles in $n$-point sets.} 
\label{table_lower_bounds_number_of_contained_irt}
\end{center}
\end{table}

\section{Recursively extending point sets by squares}
\label{sec_rec_extension}

By $S_{\square}(\cP)$ we denote the number of squares contained 
in $\cP\subseteq \R^2$. We say that $\cP$ can be embedded on the (integer) grid if there exists a similar point set $\cP'$ with $\cP'\subset\Z^2$. In our 
context most of the point sets can be assumed to be finite subsets of the integer grid. Instead of listing coordinates, we may also give a graphical representation, 
see Figure~\ref{fig_6_configurations}. Here we have also depicted the squares contained in the point set. By convention $\cP_{n,m}^i$ will always denote an $n$-point 
set with $S_{\square}(\cP_{n,m}^i)=m$, where $i$ is an index distinguishing non-similar point sets. The set of points of a single (unit) square is denoted by $\cP_{4,1}^1$; 
for other examples see Figure~\ref{fig_6_configurations}. For easier reference we label the points from $1$ to $n$. 

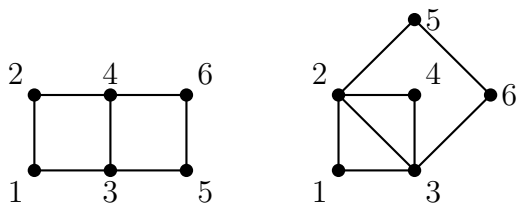
\begin{figure}[htp]
\begin{center}
\begin{tikzpicture}
  \draw (0,0) node[anchor=north east]{$1$};
  \draw (0,1) node[anchor=south east]{$2$};
  \draw (1,0) node[anchor=north]{$3$};
  \draw (1,1) node[anchor=south]{$4$};
  \draw (2,0) node[anchor=north west]{$5$};
  \draw (2,1) node[anchor=south west]{$6$};
  \fill[black] (0,0)circle(2.5pt);
  \fill[black] (0,1)circle(2.5pt);
  \fill[black] (1,0)circle(2.5pt);
  \fill[black] (1,1)circle(2.5pt);
  \fill[black] (2,0)circle(2.5pt);
  \fill[black] (2,1)circle(2.5pt);
  \draw[thick](0,0)--(2,0);
  \draw[thick](0,1)--(2,1);
  \draw[thick](0,0)--(0,1);  
  \draw[thick](1,0)--(1,1);
  \draw[thick](2,0)--(2,1);
  \draw (4,0) node[anchor=north east]{$1$};
  \draw (4,1) node[anchor=south east]{$2$};
  \draw (5,0) node[anchor=north west]{$3$};
  \draw (5,1) node[anchor=south west]{$4$};
  \draw (5,2) node[anchor=west]{$5$}; 
  \draw (6,1) node[anchor=west]{$6$};
  \fill[black] (4,0)circle(2.5pt);
  \fill[black] (4,1)circle(2.5pt);
  \fill[black] (5,0)circle(2.5pt);
  \fill[black] (5,1)circle(2.5pt);
  \fill[black] (5,2)circle(2.5pt);
  \fill[black] (6,1)circle(2.5pt);
  \draw[thick](4,0)--(5,0);
  \draw[thick](4,0)--(4,1);
  \draw[thick](4,1)--(5,1);  
  \draw[thick](5,0)--(5,1);
  \draw[thick](4,1)--(5,0);
  \draw[thick](5,2)--(6,1);
  \draw[thick](4,1)--(5,2);
  \draw[thick](5,0)--(6,1);
\end{tikzpicture}
\caption{Two non-similar point sets $\cP_{6,2}^1$ and $\cP_{6,2}^2$ consisting of $6$ points and $2$ squares.}
\label{fig_6_configurations}
\end{center}
\end{figure}

Since each square consists of four (corner) points, i.e.\ vertices, we have $S_{\square}(i)=0$ for $i\le 3$ and $S_{\square}(4)=1$. 
Let us now consider how the vertices of two different squares can overlap. To this end we distinguish between the four edges and the 
two diagonals of a square. In $\cP_{6,2}^2$ in Figure~\ref{fig_6_configurations} the vertices $2$ and $3$ form a diagonal of the square 
with vertices in $\{2,3,5,6\}$ as well as an edge of the square with vertices in $\{1,2,3,4\}$.
\begin{lemma}
  \label{lemma_two_points_determine_coordinates}
  Let $P_1,P_2\in\R^2$ be two arbitrary distinct points. Then there exist three different choices for pairs of points 
  $\left\{P_3,P_4\right\}$ such that $\left\{P_1,P_2,P_3,P_4\right\}$ spans a square. More precisely, denoting the coordinates 
  of $P_i$ by $\left(x_i,y_i\right)$ for $1\le i\le 4$, we have (up to a permutation of $P_3$ and $P_4$) that
  \begin{enumerate}
    \item[(a)] $\left(x_3,y_3\right)=\left(\frac{x_1+x_2+y_2-y_1}{2},\frac{y_1+y_2+x_1-x_2}{2}\right)$,   
               $\left(x_4,y_4\right)=\left(\frac{x_1+x_2+y_1-y_2}{2},\frac{y_1+y_2+x_2-x_1}{2}\right)$; 
               if $\left\{P_1,P_2\right\}$ is the diagonal of the square;
    \item[(b)] $\left(x_3,y_3\right)=\left(x_2 \pm(y_2 - y_1),y_2  \pm(x_1- x_2)\right)$, 
               $\left(x_4,y_4\right)=\left(x_1\pm(y_2 - y_1) , y_1 \pm(x_1 -x_2) \right)$ if $\left\{P_1,P_2\right\}$ is an edge of the square.               
  \end{enumerate}  
\end{lemma}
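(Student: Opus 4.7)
The plan is to argue by cases on the role that the pair $\{P_1, P_2\}$ plays in the square $\{P_1, P_2, P_3, P_4\}$: either it is one of the two diagonals, or it is one of the four edges. No other possibility occurs, so a count of the pairs produced by each case will yield exactly three configurations. The statements in (a) and (b) are then verified by direct coordinate computation once the geometric characterization is in place, using only the elementary fact that a $90^{\circ}$ rotation sends $(a,b)\mapsto(-b,a)$.

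For case (a), I would use the well-known property that the two diagonals of a square are perpendicular, of equal length, and bisect each other. Thus if $\{P_1,P_2\}$ is a diagonal, then $P_3$ and $P_4$ must lie on the perpendicular bisector of the segment $P_1P_2$ at distance $\tfrac12\|P_2-P_1\|$ from the midpoint $M=\bigl(\tfrac{x_1+x_2}{2},\tfrac{y_1+y_2}{2}\bigr)$. Writing $v=P_2-P_1=(x_2-x_1,y_2-y_1)$, a $90^{\circ}$ rotation gives the perpendicular vector $v^{\perp}=(y_1-y_2,x_2-x_1)$ of the same length, and I would set $P_3=M+\tfrac12 v^{\perp}$ and $P_4=M-\tfrac12 v^{\perp}$. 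Expanding these two expressions coordinate-wise reproduces the formulas in part (a), and the pair $\{P_3,P_4\}$ is uniquely determined.

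For case (b), I would argue that if $\{P_1,P_2\}$ is an edge of the square, then the two other vertices are obtained by translating $P_2$ and $P_1$ respectively by a vector perpendicular to $v=P_2-P_1$ of the same length; there are exactly two such perpendicular vectors, namely $\pm v^{\perp}=\pm(y_1-y_2,x_2-x_1)$. Setting $P_3=P_2\pm v^{\perp}$ and $P_4=P_1\pm v^{\perp}$ (with matching signs, so that $P_3P_4$ is parallel to $P_1P_2$) and simplifying gives the two coordinate pairs in part (b), corresponding to the square lying on either side of the line through $P_1$ and $P_2$.

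Finally, I would note that the one pair from case (a) and the two pairs from case (b) are pairwise distinct: in the diagonal case $M=\tfrac12(P_3+P_4)$ coincides with $\tfrac12(P_1+P_2)$, whereas in either edge case this fails (the midpoint of $\{P_3,P_4\}$ differs from $M$ by $\pm v^{\perp}$, which is nonzero because $P_1\ne P_2$). Hence the three configurations are genuinely different, and together they exhaust all squares containing $\{P_1,P_2\}$ as two of its vertices. I expect no real obstacle beyond keeping track of signs in the coordinate expansion; the only substantive check is that the diagonal-type pair and the two edge-type pairs are mutually distinct, which the midpoint comparison above settles.
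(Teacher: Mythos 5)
Your proof is correct. The paper actually gives no proof of this lemma at all -- it is stated as an elementary fact and immediately used -- so there is nothing to compare against; your case split into diagonal versus edge, the explicit coordinate computation via the rotation $(a,b)\mapsto(-b,a)$, and the midpoint argument for pairwise distinctness of the three resulting pairs $\left\{P_3,P_4\right\}$ together constitute a complete verification (your sign conventions in (a) and (b) differ from the paper's by a swap of $P_3$ and $P_4$ and by which branch of the $\pm$ is which, but both are explicitly allowed by the statement).
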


A graphical representation of the, up to similarity, unique point set with three squares through a common pair of points, without any further points, is given depicted 
in Figure~\ref{fig_3_squares_through_line_configuration}. 

\begin{figure}[htp]
\begin{center}
\begin{tikzpicture}[scale=0.8]
  \draw (2,0) node[anchor=north]{$1$};
  \draw (2,2) node[anchor=south]{$2$};
  \draw (0,0) node[anchor=north east]{$3$};
  \draw (0,2) node[anchor=south east]{$4$};
  \draw (4,0) node[anchor=north west]{$5$};
  \draw (4,2) node[anchor=south west]{$6$};
  \draw (1,1) node[anchor=east]{$7$};
  \draw (3,1) node[anchor=west]{$8$};
  \fill[black] (0,0)circle(2.5pt);
  \fill[black] (0,2)circle(2.5pt);
  \fill[black] (2,0)circle(2.5pt);
  \fill[black] (2,2)circle(2.5pt);
  \fill[black] (4,0)circle(2.5pt);
  \fill[black] (4,2)circle(2.5pt);
  \draw[thick](0,0)--(4,0);
  \draw[thick](0,2)--(4,2);
  \draw[thick](0,0)--(0,2);  
  \draw[thick](2,0)--(2,2);
  \draw[thick](4,0)--(4,2);
  \fill[black] (1,1)circle(2.5pt);
  \fill[black] (3,1)circle(2.5pt);
  \draw[thick](1,1)--(2,0);
  \draw[thick](1,1)--(2,2);
  \draw[thick](3,1)--(2,0);
  \draw[thick](3,1)--(2,2);
\end{tikzpicture}
\caption{Three squares through a pair of points -- point set $\cP_{8,3}^1$.}
\label{fig_3_squares_through_line_configuration}
\end{center}
\end{figure}
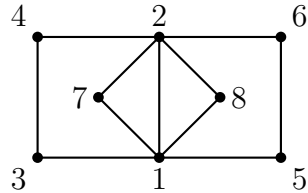
 
Since each square consists of ${4\choose 2}=6$ pairs of points, Lemma~\ref{lemma_two_points_determine_coordinates} directly 
implies the upper bound $S_{\square}(n)\le {n\choose 2} /2=\tfrac{n^2}{4}+O(n)$. Another easy implication is that through each 
triple of points there exists at most one square. So, given an $n$-point set $\cP$ we may consider every triple of points and 
compute a candidate for an $(n+1)$th point by considering a square through the new point. We call this procedure $1$-point 
extension or just $1$-extension for short.

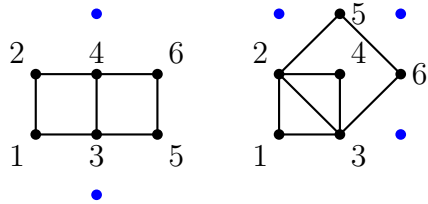
\begin{figure}[htp]
\begin{center}
\begin{tikzpicture}[scale=0.8]
  \fill[blue] (1,-1)circle(2.5pt);
  \fill[blue] (1,2)circle(2.5pt);
  \draw (0,0) node[anchor=north east]{$1$};
  \draw (0,1) node[anchor=south east]{$2$};
  \draw (1,0) node[anchor=north]{$3$};
  \draw (1,1) node[anchor=south]{$4$};
  \draw (2,0) node[anchor=north west]{$5$};
  \draw (2,1) node[anchor=south west]{$6$};
  \fill[black] (0,0)circle(2.5pt);
  \fill[black] (0,1)circle(2.5pt);
  \fill[black] (1,0)circle(2.5pt);
  \fill[black] (1,1)circle(2.5pt);
  \fill[black] (2,0)circle(2.5pt);
  \fill[black] (2,1)circle(2.5pt);
  \draw[thick](0,0)--(2,0);
  \draw[thick](0,1)--(2,1);
  \draw[thick](0,0)--(0,1);  
  \draw[thick](1,0)--(1,1);
  \draw[thick](2,0)--(2,1);
  \draw (4,0) node[anchor=north east]{$1$};
  \draw (4,1) node[anchor=south east]{$2$};
  \draw (5,0) node[anchor=north west]{$3$};
  \draw (5,1) node[anchor=south west]{$4$};
  \draw (5,2) node[anchor=west]{$5$}; 
  \draw (6,1) node[anchor=west]{$6$};
  \fill[black] (4,0)circle(2.5pt);
  \fill[black] (4,1)circle(2.5pt);
  \fill[black] (5,0)circle(2.5pt);
  \fill[black] (5,1)circle(2.5pt);
  \fill[black] (5,2)circle(2.5pt);
  \fill[black] (6,1)circle(2.5pt);
  \fill[blue] (6,0)circle(2.5pt); 
  \fill[blue] (6,2)circle(2.5pt);
  \fill[blue] (4,2)circle(2.5pt);
  \draw[thick](4,0)--(5,0);
  \draw[thick](4,0)--(4,1);
  \draw[thick](4,1)--(5,1);  
  \draw[thick](5,0)--(5,1);
  \draw[thick](4,1)--(5,0);
  \draw[thick](5,2)--(6,1);
  \draw[thick](4,1)--(5,2);
  \draw[thick](5,0)--(6,1);
\end{tikzpicture}
\caption{$1$-extension for the point sets $\cP_{6,2}^1$ and $\cP_{6,2}^2$.}
\label{fig_1_extension}
\end{center}
\end{figure}

In Figure~\ref{fig_1_extension} we have depicted the candidate points in the $1$-extension for $\cP_{6,2}^1$ and $\cP_{6,2}^2$ by blue circles. 
Since one of the three $2$-subsets of the triple of points has to be an edge of the square, we conclude from Lemma~\ref{lemma_two_points_determine_coordinates}.(b) 
that the {\lq\lq}new{\rq\rq} point lies on the integer grid if $\cP\subset \Z^2$. Of course it may happen that three points determine a fourth point that is already 
contained in the point set. Up to symmetry, i.e., similarity, the resulting point sets are depicted in Figure~\ref{fig_7_configurations}. Note that 
$\cP_{6,2}^1$ yields $\cP_{7,3}^1$ only, while $\cP_{6,2}^1$ produces both $7$-point sets. 

\begin{figure}[htp]
\begin{center}
\begin{tikzpicture}[scale=0.8]
  \fill[black] (0,0)circle(2.5pt);
  \fill[black] (0,1)circle(2.5pt);
  \fill[black] (1,0)circle(2.5pt);
  \fill[black] (2,0)circle(2.5pt);
  \fill[black] (1,1)circle(2.5pt);
  \fill[black] (1,2)circle(2.5pt);
  \fill[black] (2,1)circle(2.5pt);
  \draw[thick](0,0)--(2,0);
  \draw[thick](0,0)--(0,1);
  \draw[thick](0,1)--(2,1);  
  \draw[thick](1,0)--(2,1);
  \draw[thick](0,1)--(1,0);
  \draw[thick](1,2)--(2,1);
  \draw[thick](0,1)--(1,2);
  \draw[thick](1,0)--(1,1);
  \draw[thick](2,0)--(2,1);
\end{tikzpicture}
\quad
\begin{tikzpicture}
  \fill[black] (0,0)circle(2.5pt);
  \fill[black] (0,1)circle(2.5pt);
  \fill[black] (1,0)circle(2.5pt);
  \fill[black] (2,2)circle(2.5pt);
  \fill[black] (1,1)circle(2.5pt);
  \fill[black] (1,2)circle(2.5pt);
  \fill[black] (2,1)circle(2.5pt);
  \draw[thick](0,0)--(1,0);
  \draw[thick](0,0)--(0,1);
  \draw[thick](0,1)--(2,1);  
  \draw[thick](1,0)--(2,1);
  \draw[thick](0,1)--(1,0);
  \draw[thick](1,2)--(2,1);
  \draw[thick](0,1)--(1,2);
  \draw[thick](1,0)--(1,2);
  \draw[thick](2,1)--(2,2);
  \draw[thick](1,2)--(2,2);
\end{tikzpicture}
\caption{Two non-similar point sets $\cP_{7,3}^1$ and $\cP_{7,3}^2$.}
\label{fig_7_configurations}
\end{center}
\end{figure}
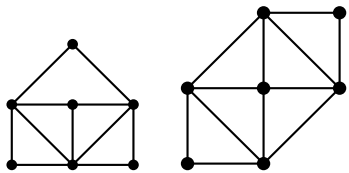

Lemma~\ref{lemma_two_points_determine_coordinates} also suggests a $2$-extension procedure, i.e., for an arbitrary pair of points of an $n$-point 
set $\cP$ consider the three possible squares containing them. Here the candidates come in pairs of {\lq\lq}new{\rq\rq} points. We depict those 
candidates by a blue line and also color the corresponding vertices blue, see Figure~\ref{fig_2_extension} for an example of $2$-extension 
applied to the unit square $\cP_{4,1}^1$. The four horizontal or vertical blue lines yield $\cP_{6,2}^1$ and the four skewed blue lines yield $\cP_{6,2}^2$. 

\begin{figure}[htp]
\begin{center}
\begin{tikzpicture}[scale=0.7]
  \fill[blue] (-1,0)circle(2.5pt);
  \fill[blue] (-1,1)circle(2.5pt);
  \fill[blue] (2,0)circle(2.5pt);
  \fill[blue] (2,1)circle(2.5pt);
  \fill[blue] (0,2)circle(2.5pt);
  \fill[blue] (1,2)circle(2.5pt);
  \fill[blue] (0,-1)circle(2.5pt);
  \fill[blue] (1,-1)circle(2.5pt);
  \draw (0,0) node[anchor=north east]{$1$};
  \draw (0,1) node[anchor=south east]{$2$};
  \draw (1,0) node[anchor=north west]{$3$};
  \draw (1,1) node[anchor=south west]{$4$};
  \fill[black] (0,0)circle(2.5pt);
  \fill[black] (0,1)circle(2.5pt);
  \fill[black] (1,0)circle(2.5pt);
  \fill[black] (1,1)circle(2.5pt);
  \draw[thick](0,0)--(1,0);
  \draw[thick](0,1)--(1,1);
  \draw[thick](0,0)--(0,1);  
  \draw[thick](1,0)--(1,1);
  \draw[thick,blue](-1,0)--(-1,1)--(0,2)--(1,2)--(2,1)--(2,0)--(1,-1)--(0,-1)--(-1,0);
\end{tikzpicture}
\caption{$2$-extension applied to $\cP_{4,1}^1$.}
\label{fig_2_extension}
\end{center}
\end{figure}

To simplify the notation, we will consider $1$-extensions as a special case of $2$-extensions, i.e., we allow that from the two {\lq\lq}new{\rq\rq} points 
one (or both, to include the degenerate case) can be already contained in the point set. Note that this case does not occur in our example in 
Figure~\ref{fig_2_extension} since there exists no $1$-extension of the unit square $\cP_{4,1}^1$ (that increases the number of points). From 
Lemma~\ref{lemma_two_points_determine_coordinates}.(a) we see that it may be possible to scale the resulting point set by a factor of two in order 
to stay within the integer grid. Note that in our example no scaling was necessary. Applying $2$-extension recursively, starting from the unit square 
$\cP_{4,1}^1$, gives us quite some non-similar $n$-point sets with $m$ squares, see Table~\ref{table_iso_types_exhaustive_2_extension}.\footnote{There are e.g.\ 
$1\,180\,723\,093$ non-similar $17$-point sets. For a 
publicly available implementation by Hugo van der Sanden see \url{https://github.com/hvds/seq/tree/master/A051602}.}   

\begin{table}[htp!]
  \begin{center}
    \begin{tabular}{llllllllllllllllll}
      \hline
      $n$ & 4 & 6 & 7 &  8 & 8 &  9 & 9 & 9 &  10 & 10 & 10 & 10 & 11   & 11  & 11 & 11 & 12    \\ 
      $m$ & 1 & 2 & 3 &  3 & 4 &  4 & 5 & 6 &   4 &  5 & 6  & 7  & 5    & 6   & 7  & 8  & 5     \\ 
      \# &  1 & 2 & 2 & 15 & 2 & 34 & 1 & 1 & 340 & 74 & 5  & 1  & 1405 & 159 & 15 & 5  & 15621 \\
      \hline
    \end{tabular}  
    \begin{tabular}{llllllllllllllll}
      \hline
      $n$ & 12   & 12  & 12 & 12 & 12 & 12 & 13    & 13    & 13   & 13  & 13 & 13 & 13 & 13 \\ 
      $m$ & 6    & 7   & 8 & 9  & 10 & 11 & 6     & 7     & 8    & 9   & 10 & 11 & 12 & 13 \\ 
      \#  & 4729 & 476 & 80 & 11 & 3  & 1  & 90573 & 15955 & 1836 & 482 & 43 & 14 & 1  & 1 \\ 
      \hline
    \end{tabular}
    \begin{tabular}{llllllllllllllll}
      \hline
      $n$ & 14      & 14     & 14    & 14   & 14   & 14  & 14 & 14 & 14 & 14 & 15      \\
      $m$ & 6       & 7      & 8     & 9    & 10   & 11  & 12 & 13 & 14 & 15 & 7       \\ 
      \#  & 1088332 & 403295 & 61386 & 9319 & 2301 & 356 & 83 & 10 & 4  & 2  & 8143021 \\ 
      \hline
    \end{tabular}
    \begin{tabular}{llllllllllllllll}
      \hline
      $n$ & 15      & 15     & 15    & 15    & 15   & 15  & 15  & 15 & 15 & 15 \\
      $m$ & 8       & 9      & 10    & 11    & 12   & 13  & 14  & 15 & 16 & 17 \\ 
      \#  & 1745837 & 273037 & 60632 & 10982 & 2693 & 460 & 122 & 26 & 7  & 2  \\
      \hline
    \end{tabular}  
    \begin{tabular}{llllllllllllllll}
      \hline
      $n$ & 16        & 16       & 16      & 16      & 16     & 16    & 16    & 16   \\
      $m$ & 7         & 8        & 9       & 10      & 11     & 12    & 13    & 14   \\ 
      \#  & 101999759 & 44513294 & 8155822 & 1445326 & 360147 & 69230 & 19076 & 3488 \\
      \hline
    \end{tabular}
    \begin{tabular}{llllllllllllllll}
      \hline
      $n$ & 16   & 16  & 16 & 16 & 16 & 16 & 17        & 17        & 17       & 17      \\
      $m$ & 15   & 16  & 17 & 18 & 19 & 20 & 8         & 9         & 10       & 11      \\ 
      \#  & 1017 & 239 & 55 & 17 & 3  & 2  & 919429357 & 215082508 & 37029433 & 7414942 \\ 
      \hline
    \end{tabular}
    \begin{tabular}{llllllllllllllll}
      \hline
      $n$ & 17      & 17     & 17    & 17    & 17   & 17  & 17 & 17 & 17 & 17 & 17 \\
      $m$ & 12      & 13     & 14    & 15    & 16   & 17  & 18 & 19 & 20 & 21 & 22 \\ 
      \#  & 1419401 & 281512 & 52643 & 10546 & 2137 & 511 & 89 & 11 & 2  & 0  & 1  \\
      \hline
    \end{tabular}
    \caption{Number of non-similar points sets $\cP$ produced by recursive $2$-extension starting from a unit square per number of points $n$ and
    squares $m=S_{\square}(\cP)$.}
    \label{table_iso_types_exhaustive_2_extension}
  \end{center}
\end{table}

Note that if an $n$-point set $\cP$ is obtained from the recursive application of $2$-extension, starting from the unit square, then we have
$S_{\square}(\cP)\ge \left\lceil n/2\right\rceil-1$ for $n\ge 6$. In particular, no $7$-point set $\cP$ with $S_{\square}(\cP)=2$ squares will be 
obtained. Note that the configuration in Figure~\ref{fig_flexible_7_2_configuration} is non-rigid in the sense that we can twist the two squares, 
connected via vertex~$4$, without changing the number of squares in the point set. While the resulting point sets will be non-similar, we will 
discuss such transformations later on. Despite those complications, we nevertheless expect that for each $n$-point set $\cP$ with a sufficiently 
large number $S_{\square}(\cP)$ of squares there exists a similar point set that can be obtained by 
the recursive application of $2$-extension starting from $\cP_{4,1}^1$. More concretely, we state:
\begin{conjecture}
  \label{conjecture_2_extension}  
  For $n\ge 6$ every $n$-point set $\cP$ with $S_{\square}(\cP)=S_{\square}(n)$ is similar to an $n$-point set $\cP'$ obtained by 
  recursive $2$-extension starting from $\cP_{4,1}^1$.
\end{conjecture}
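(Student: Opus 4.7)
My plan is to proceed by strong induction on $n$ with base case $n=6$. The $n=6$ case follows by direct inspection of Table~\ref{table_iso_types_exhaustive_2_extension}: both (up to similarity) extremal point sets $\cP_{6,2}^1$ and $\cP_{6,2}^2$ appear in the $2$-extension enumeration, and $S_{\square}(6)=2$. For the inductive step, given an extremal $n$-point set $\cP$ with $S_{\square}(\cP)=S_{\square}(n)$, the goal is to exhibit a subset $\cP'\subsetneq\cP$ with $\#\cP'\in\{n-1,n-2\}$ such that (i) $\cP'\to\cP$ is a valid (possibly degenerate) $2$-extension step, meaning the deleted point(s) complete a square with a pair already in $\cP'$ in the sense of Lemma~\ref{lemma_two_points_determine_coordinates}, and (ii) $\cP'$ is extremal for its cardinality. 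Applying the induction hypothesis to $\cP'$ then yields the conclusion.

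To find such $\cP'$, I would combine an averaging argument with a structural one. Each pair of points lies in at most three squares (one diagonal and two edge configurations), so each point of $\cP$ lies in at most $n-1$ squares. Averaging yields a point $p^\star$ contained in at most $\lfloor 4S_{\square}(n)/n\rfloor$ squares, which by $S_{\square}(n)\le n^2/8+O(n)$ is at most $n/2+O(1)$. A refinement should produce a pair $\{p^\star,q^\star\}$ that together with two vertices of $\cP$ completes a square, chosen so that $\cP\setminus\{p^\star,q^\star\}$ retains $S_{\square}(n)-O(n)$ squares. Since the known lower-bound constructions satisfy $S_{\square}(n)-S_{\square}(n-2)=\Theta(n)$, this subset cannot be far from extremal for $n-2$.

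The hard part is upgrading \emph{close to} extremal to \emph{exactly} extremal so that the induction hypothesis applies cleanly. The natural fix is to strengthen the induction hypothesis to a quantitative threshold statement: every $n'$-point set $\cP'$ with $S_{\square}(\cP')\ge f(n')$ is obtainable by recursive $2$-extension from $\cP_{4,1}^1$, for some function $f(n')\le S_{\square}(n')$ chosen so that the arithmetic $S_{\square}(n)-(\text{squares lost on removal})\ge f(n-2)$ closes. This is precisely the template used for isosceles right triangles in Proposition~\ref{proposition_1_extension_obtainable_2}, where the combination of Lemma~\ref{lemma_avg_t}, the analogue of Lemma~\ref{lemma_1_extension_maximal_rit}, and an ILP-based computational check for small $n$ rules out pathological obstructions.

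I expect the chief obstacle to be the non-rigid configurations of the type illustrated in Figure~\ref{fig_flexible_7_2_configuration}, where two square components are joined only at a vertex (or not at all) and can be twisted independently without changing $S_{\square}$. Such configurations resist straightforward removal arguments because every pair of points in the ``joint'' region is essential to one of the components, and the averaging bound on squares through a pair becomes too weak. Establishing a structural lemma showing that such flexibility cannot persist inside an exactly extremal set for large $n$, together with closing out the moderate range $n\ge 18$ where exhaustive enumeration is infeasible, is presumably why the statement is only a conjecture.
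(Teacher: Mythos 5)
The statement you are asked to prove is stated in the paper as Conjecture~\ref{conjecture_2_extension}; the paper offers no proof of it, only supporting evidence for $n\le 17$ obtained in Theorems~\ref{thm_exact_values_squares_1} and~\ref{thm_exact_values_squares_2} by a mixture of exhaustive enumeration, the decomposition-at-a-vertex machinery of Lemma~\ref{lemma_connected_components_neighborhood_graph}, and the mixed bound of Lemma~\ref{lemma_2_extension_maximal_mixed_triangle_square}. Your proposal is therefore not comparable to a proof in the paper, and on its own terms it is an outline with the decisive steps left open. Concretely: the inductive step requires producing a pair $\{p^\star,q^\star\}$ whose removal both (i) undoes a valid $2$-extension step (the two removed points must complete a square with two \emph{retained} points) and (ii) leaves a set meeting a threshold $f(n-2)$; you never construct $f$, never verify that the arithmetic closes, and never show that a removable pair of type (i) exists in an arbitrary extremal configuration. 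The averaging argument only bounds the number of squares through a point from above; it gives no lower bound on how many squares survive the removal, and it gives no handle on whether the removed points sit on the ``new'' side of any square.

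A more specific obstruction that your sketch does not address is the class of configurations in which no pair of points is contained in two different squares, i.e.\ the $\cP_{6,2}^\star$-free sets. Any point set obtained by recursive $2$-extension with at least two squares necessarily contains a pair of points lying in two squares, so a $\cP_{6,2}^\star$-free extremal configuration would be an outright counterexample to the conjecture. The paper controls these via the constant-weight-code bound $S_{\square}(n;\cP_{6,2}^\star)\le A(n,6,4)$, which grows like $n^2/12$ --- of the same order as $S_{\square}(n)$ itself --- and even for $n=13$ the paper only manages $S_{\square}(13;\cP_{6,2}^\star)\le 8$ against the target value $13$, leaving a conjecture open there as well. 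Your averaging over pairs is powerless against this class precisely because every pair lies in at most one square. Combined with the non-rigid configurations you do mention, this is why the statement remains a conjecture; your proposal correctly identifies the general shape a proof might take (a quantitative threshold induction as in Proposition~\ref{proposition_1_extension_obtainable_2}) but does not supply the lemmas that would make it work.
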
 

\begin{figure}[htp]
\begin{center}
\begin{tikzpicture}[scale=0.5]
  \draw (0,0) node[anchor=north east]{$1$};
  \draw (0,1) node[anchor=south east]{$2$};
  \draw (1,0) node[anchor=north west]{$4$};
  \draw (1,1) node[anchor=south west]{$3$};
  \draw (0,2) node[anchor=south east]{$5$};
  \draw (2,3) node[anchor=south west]{$6$};
  \draw (3,1) node[anchor=north west]{$7$};
  \fill[black] (0,0)circle(2.5pt);
  \fill[black] (0,1)circle(2.5pt);
  \fill[black] (1,0)circle(2.5pt);
  \fill[black] (1,1)circle(2.5pt);
  \fill[black] (3,1)circle(2.5pt);
  \fill[black] (0,2)circle(2.5pt);
  \fill[black] (2,3)circle(2.5pt);
  \draw[thick](0,0)--(1,0);
  \draw[thick](0,1)--(1,1);
  \draw[thick](0,0)--(0,1);  
  \draw[thick](1,0)--(1,1);
  \draw[thick](1,0)--(0,2)--(2,3)--(3,1)--(1,0);
\end{tikzpicture}
\caption{A non-rigid $7$-point set $\cP_{7,2}^1$ with two squares.}
\label{fig_flexible_7_2_configuration}
\end{center}
\end{figure}
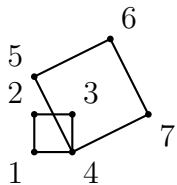

Associating $\R^2$ with $\C$ we can state compact criteria for four points forming a square.
\begin{lemma}$\,$\\[-3mm]
  \begin{itemize}
    \item[(1)] Four vertices $z_1,z_2,z_3,z_4\in\C$ of a quadrilateral in counterclockwise order form a parallelogram iff 
               $z_1-z_2+z_3-z_4=0$.\\[-7mm]
    \item[(2)] Three vertices $z_1,z_2,z_3\in \C$ form a right angle in counterclockwise order\footnote{The vector $z_3-z_1$ rotated by 
               a right angle in counterclockwise order gives the vector $z_1-z_2$.} iff $\left(z_3-z_2\right)i=z_1-z_2$.\\[-7mm]
    \item[(3)] Four vertices $z_1,z_2,z_3,z_4\in\C$ of a quadrilateral in counterclockwise order form a square iff 
               $z_1-z_2+z_3-z_4=0$ and $z_4-z_1=i\left(z_2-z_1\right)$.                        
  \end{itemize}
\end{lemma}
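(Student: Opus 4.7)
The plan is to handle the three items in turn; all three are direct translations between elementary planar-geometric relations and identities in complex arithmetic, and the common engine is the fact that multiplication by $i$ in $\C$ is rotation by $+\pi/2$ about the origin. Part~(3) will then essentially follow by combining parts~(1) and~(2), so the substantive content lies in the first two.

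For (1), I would use the classical characterization that the quadrilateral $z_1z_2z_3z_4$ (in CCW order) is a parallelogram iff two opposite sides are parallel and of the same length, i.e., $z_2-z_1=z_3-z_4$ as vectors under the identification $\R^2\cong\C$. Rearranging gives the stated identity. Equivalently, one may use the fact that a quadrilateral is a parallelogram iff its two diagonals share a midpoint, i.e., $(z_1+z_3)/2=(z_2+z_4)/2$, which is again the same equation.

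For (2), the plan is just to interpret $(z_3-z_2)i=z_1-z_2$ geometrically. Since multiplication by $i$ is counterclockwise rotation by $\pi/2$ about the origin, the identity says that the vector from $z_2$ to $z_1$ is the vector from $z_2$ to $z_3$ rotated by a right angle CCW, which is precisely the assertion that $(z_1,z_2,z_3)$ forms a right angle at $z_2$ with the prescribed orientation.

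For (3), the ($\Rightarrow$) direction is immediate: a CCW square is in particular a CCW parallelogram, giving the first equation by (1), and its adjacent sides $z_1z_2$ and $z_1z_4$ are equal in length and perpendicular, with CCW orientation forcing $z_1z_4$ to be $z_1z_2$ rotated by $+\pi/2$, hence $z_4-z_1=i(z_2-z_1)$. Conversely, the first equation makes $z_1z_2z_3z_4$ a parallelogram, and the second gives $|z_4-z_1|=|z_2-z_1|$ (since $|i|=1$) together with a right angle at $z_1$ by the reasoning from (2); a parallelogram with one right angle is a rectangle, and a rectangle with equal adjacent sides is a square, which is CCW-oriented by construction. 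The only point that demands any care is the orientation bookkeeping, namely checking that the CCW convention produces the sign $+i$ (rather than $-i$) in (2) and (3); once that is pinned down, the rest is a direct translation.
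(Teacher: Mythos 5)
The paper states this lemma without any proof at all, treating it as an elementary consequence of the identification of $\R^2$ with $\C$ and the fact that multiplication by $i$ is counterclockwise rotation by $\pi/2$. Your argument is correct and is exactly the standard verification one would supply: (1) via equal opposite side-vectors (equivalently, coinciding diagonal midpoints), (2) by direct geometric reading of the identity, and (3) by combining the two, with the orientation check confirming the sign $+i$ (e.g.\ the unit square $0,1,1+i,i$ satisfies both equations). The only minor point worth flagging is that the footnote in the statement appears to contain a typo ($z_3-z_1$ where $z_3-z_2$ is meant); your reading of the intended relation, a right angle at $z_2$ with $z_1-z_2$ obtained from $z_3-z_2$ by a counterclockwise quarter turn, is the one consistent with the displayed formula.
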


Our next aim are combinatorial relaxations for point sets $\cP\subset \R^2$, i.e., we want to consider discrete representations that do not list 
the coordinates of the points. An \emph{incidence structure} is a triple  $(P, L, I)$ where $P,L$ are sets and $I\subseteq P\times L$ is the \emph{incidence relation}. 

\begin{definition}
  \label{definition_oriented_square_set}
  An \emph{oriented square set} is an incidence structure $(P,S,I)$, where $P=\{1,\dots,n\}$ for some integer $n$, $S$ consists of 
  objects of the form $\left(v_1,v_2,v_3,v_4\right)$, where $1\le v_1,v_2,v_3,v_4\le n$ are pairwise different with $v_1=\min\{v_1,v_2,v_3,v_4\}$, and 
  $\left(p,\left(v_1,v_2,v_3,v_4\right)\right)\in I$ iff $p\in\left\{v_1,v_2,v_3,v_4\right\}$. We call the elements of $S$ \emph{squares}.
\end{definition}
We abbreviate an oriented square set $\cI=(P,S,I)$ by $(P,S)$ (since $I$ is well defined by $P$ and $S$), call $\#P$ the \emph{order} of $\cI$ and 
$\# S$ the cardinality $\#\cI$ of $\cI$. We also speak of an oriented square set $S$ of order $n$ referring to the oriented 
square set $\left(\left\{1,\dots,n\right\},S\right)$. 

Two oriented square sets $(P,S,I)$ and $(P',S',I')$ are called \emph{isomorphic} if there exist bijections $\alpha\colon P\to P'$ and $\beta\colon S\to S'$ 
such that $(p,s)\in I$ iff $(\alpha(p),\beta(s))\in I'$. 

\begin{definition}
  \label{definition_oriented_square_set_realizable}
  An oriented square set $(P,S)$ of order $n$ is called \emph{realizable} if their exist pairwise disjoint points $P_1,\dots,P_n\in\R^2$ such that 
  the points $P_a,P_b,P_c,P_d$ form a square in counterclockwise ordering for each $\left(a,b,c,d\right)\in S$. 
\end{definition} 

Of course the determination to the counterclockwise ordering is arbitrary and we may also require clockwise orderings 
for all squares. Given an $n$-point set $\cP\subset\R^2$ and an arbitrary labeling of the points with labels in $\{1,\dots,n\}$, 
there exists a unique oriented square set $(P,S)$ with $\#S=S_{\square}(\cP)$ that is realized by $\cP$. We call $(P,S)$ the \emph{maximal 
oriented square set} of $\cP$ and also use the notation $S(\cP)$. For any subset $S'\subseteq S$ the pair $(P,S')$ is also an oriented square 
set realized by $\cP$ and we speak of \emph{an oriented square set of $\cP$}.  

\begin{example}
  \label{example_oss_6_2_1}
  Consider the point set $\cP_{6,2}^1$ with labels and coordinates as in Figure~\ref{fig_6_configurations}. The maximal oriented square 
  set $S$ (of order $6$) of $\cP$ is given by $S=\big\{(1,3,4,2),(3,5,6,4)\big\}$. Any relabeling of the points yields an isomorphic oriented 
  square system that is also realizable. 
\end{example}

Next we want to consider the inverse problem, i.e., when is a given oriented square set $(P,S)$ realizable?
\begin{definition}
  \label{definition_equation_system}
  Let $S$ be an oriented square set of order $n$. For $n$ complex-valued variables $z_j$, where $1\le j\le n$, the linear equation system $L(n,S)$ 
  consists of the $2\cdot \#S$ equations 
  $$ 
    z_a-z_b+z_c-z_d=0\quad \text{ and }\quad z_d-z_a=i\left(z_b-z_a\right)
  $$
  for each $s=(a,b,c,d)\in S$. 
\end{definition}
\begin{lemma}
  \label{lemma_criterium_realizable}
  Let $S$ be an oriented square set of order $n$. It is realizable iff the linear equation system $L(n,S)$ admits 
  a solution satisfying $z_j\neq z_k$ for all $1\le j<k\le n$.   
\end{lemma}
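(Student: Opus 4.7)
The plan is essentially to invoke part (3) of the preceding lemma (which characterizes when four complex numbers form a square in counterclockwise order) and turn the definition of realizability into the equation system $L(n,S)$ point by point, using the identification $\R^2 \cong \C$.

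First I would prove the forward direction. Assuming $(P,S)$ is realizable, pick witnessing points $P_1,\dots,P_n\in\R^2$ and let $z_j\in\C$ be the complex number corresponding to $P_j$ under the standard identification. Pairwise disjointness of the $P_j$'s gives immediately $z_j \ne z_k$ for $j\ne k$. For each $(a,b,c,d)\in S$, the quadruple $(P_a,P_b,P_c,P_d)$ is a square in counterclockwise order; by part (3) of the previous lemma, this is equivalent to $z_a-z_b+z_c-z_d=0$ and $z_d-z_a=i(z_b-z_a)$, which are exactly the two equations of $L(n,S)$ associated with $(a,b,c,d)$. Hence the $z_j$'s furnish a solution of the required form.

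Next I would prove the converse. Given a solution $(z_1,\dots,z_n)$ of $L(n,S)$ with pairwise distinct entries, define $P_j\in\R^2$ to be the point corresponding to $z_j$. Pairwise distinctness of the $z_j$'s translates to pairwise disjointness of the $P_j$'s. For any $(a,b,c,d)\in S$, the two equations of $L(n,S)$ give back precisely the hypotheses of the square-characterization lemma, so $(P_a,P_b,P_c,P_d)$ forms a square in counterclockwise order. This is exactly what Definition~\ref{definition_oriented_square_set_realizable} demands, so $(P,S)$ is realizable.

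There is really no obstacle in the argument: both implications are essentially a dictionary translation between the geometric condition of ``four points forming a square in counterclockwise order'' and the algebraic conditions in $L(n,S)$, mediated by the characterization already established. The only points worth being slightly careful about are (i) that the distinctness condition $z_j\ne z_k$ corresponds exactly to the ``pairwise disjoint points'' requirement in the definition of realizable, and (ii) that the two equations listed for each $(a,b,c,d)\in S$ in Definition~\ref{definition_equation_system} match the two equations in part (3) of the lemma. Both checks are immediate, so the proof will be short.
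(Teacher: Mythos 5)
Your proof is correct and is exactly the argument the paper has in mind: the lemma is stated without proof there, being regarded as an immediate consequence of part (3) of the preceding characterization lemma together with Definitions~\ref{definition_oriented_square_set_realizable} and~\ref{definition_equation_system}, which is precisely the dictionary translation you carry out.
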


\begin{example}
  \label{example_attached_equation_system_7_2}
  Consider the point set $\cP_{7,2}^1$ with coordinates and labels as in Figure~\ref{fig_flexible_7_2_configuration}. The maximal oriented square 
  set of order $7$ of $\cP_{7,2}^1$ is given by $S=\left\{(1,4,3,2),(4,7,6,5)\right\}$. Using Lemma~\ref{lemma_criterium_realizable} we can compute 
  the full space of realizations. W.l.o.g.\ we assume $z_1=0$ and $z_2=i$ for the coordinates of the points with labels $1$ and $2$, respectively.
  
  Over the reals, the solution space of $L(7,S)$ is two-dimensional and can e.g.\ be parameterized as     
  \begin{eqnarray*}
    \cP(u,v) &=& \big\{0,i,1+i,1, 1+u+vi, 1+u+v+(v-u)i,1+v,-ui\big\}\\
             &\cong&\big\{(0,0), (0,1), (1,1), (1,0), (1+u,v), (1+u+v,v-u),(1+v,-u)\big\}\subset\R^2.
  \end{eqnarray*}
  The condition $z_j\neq z_k$ for all $1\le j<k\le 7$ are equivalent to
  $$
    (u,v)\notin\big\{(-1,0),(-1,1),(0,1),(0,0),(-\tfrac{1}{2},-\tfrac{1}{2}),(-\tfrac{1}{2},\tfrac{1}{2}),(-1,-1),(0,-1)\big\}.
  $$  
  For $(u,v)=(0,1)$ the point set $\cP(u,v)$ is similar to $\cP_{6,2}^1$, for $(u,v)=(-1,1)$ we end up with $\cP_{6,2}^2$. An extreme case occurs for $(u,v)=(0,0)$, 
  where the vertices $4$, $5$, $6$, and $7$ are pairwise identical, i.e., we end up with $\cP_{4,1}$. The coordinates used in Figure~\ref{fig_flexible_7_2_configuration} 
  correspond to $(u, v) = (-1, 2)$.
\end{example}

\begin{remark}
  The point sets $\cP(u,v)$ considered in Example~\ref{example_attached_equation_system_7_2} are pairwise non-similar for different values of 
  $(u,v)\in\R^2$. This property does not hold in general and is due to our chosen specific parameterization.  If $S$ is the maximal oriented square 
  set for $\cP_{6,2}^2$, then all solutions with pairwise different coordinates $z_j$ correspond to similar point sets. Note that even a solution 
  with pairwise different coordinates may correspond to a realization $\cP'$ of $S$ with $S_{\square}(\cP')>\#S$. 
\end{remark}

Solutions of $L(n,S)$ such that there exist indices $j\neq k$ with $z_j=z_k$ are called \emph{degenerate}. We will now study the question when $L(n,S)$ 
admits a non-degenerate solution. For convenience, we will be working over $\R$ again and apply a linear transformation so 
that the solution space of $L(n,S)$ is spanned by $\lambda_1,\dots,\lambda_l\in\R$. The conditions $z_j\neq z_k$ transfer to 
conditions of the form $\sum_{j=1}^l a_j\lambda_i\neq 0$ linked as $\vee$-pairs, where the $a_j$ are rational numbers.    
\begin{lemma}
  \label{lemma_neq_zero_constraints}
  Let $a_i^j\in \Q$ for $1\le i\le l$, $1\le j\le n$, where $l$, $n$ are arbitrary integers, and $F_1,\dots, F_f$ be arbitrary 
  subsets of $\{1,\dots, n\}$. Then there exists a vector $x\in\R^l$ such that for each $1\le h\le f$ there exists an index $j\in 
  F_h$ such that $\sum_{i=1}^l a_i^j x_i\neq 0$ iff for each $1\le h\le f$ there exists an index $j\in F_h$ such that $\left(a_1^j,\dots,a_l^j\right) 
  \neq \mathbf{0}$.   
\end{lemma}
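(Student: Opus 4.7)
The plan is to prove the two directions separately, with the forward direction being essentially trivial and the backward direction reducing to the standard fact that $\R^l$ cannot be covered by finitely many proper linear subspaces.

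First I would dispatch the ``only if'' direction: if there is some $x \in \R^l$ witnessing the conclusion, then for each $h$ the index $j \in F_h$ with $\sum_{i=1}^l a_i^j x_i \neq 0$ automatically satisfies $(a_1^j,\dots,a_l^j) \neq \mathbf{0}$, for otherwise the sum would vanish identically in $x$. This takes one sentence.

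For the ``if'' direction I would fix, for each $1 \le h \le f$, a single index $j_h \in F_h$ with $(a_1^{j_h},\dots,a_l^{j_h}) \neq \mathbf{0}$, as guaranteed by hypothesis. Each such choice defines a proper linear hyperplane
\[
H_h := \Bigl\{ x \in \R^l : \sum_{i=1}^l a_i^{j_h} x_i = 0 \Bigr\} \subsetneq \R^l.
\]
Any $x \in \R^l \setminus \bigcup_{h=1}^f H_h$ then satisfies $\sum_{i=1}^l a_i^{j_h} x_i \neq 0$ for every $h$, so the index $j_h \in F_h$ witnesses the desired $\vee$-condition.

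It remains to observe that such an $x$ exists, i.e., that a finite union of proper linear subspaces of $\R^l$ is a proper subset of $\R^l$. This is standard: one can either argue by a dimension/induction argument on $l$, or constructively, by choosing a line $\{x_0 + t v : t \in \R\}$ in generic direction and noting that it meets each $H_h$ in at most one point, so only finitely many values of $t$ are excluded. There is essentially no obstacle here; the only subtlety worth flagging is that the $\vee$-structure of the constraints is what allows us to reduce from $\#F_h$ linear conditions per index $h$ to just one, after which the standard covering argument applies. Note that this argument uses only that the base field is infinite, so working over $\R$ (as opposed to $\Q$ where the coefficients live) is not essential.
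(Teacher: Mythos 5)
Your proposal is correct, and the ``only if'' direction matches the paper's. The difference lies in how the witness $x$ is produced in the ``if'' direction. You reduce to the standard fact that $\R^l$ is not a finite union of the proper hyperplanes $H_1,\dots,H_f$; this is valid, and as you note it never uses the rationality of the $a_i^j$, only that the field is infinite, so it proves a slightly more general statement. The paper instead exploits the hypothesis $a_i^j\in\Q$ directly: it chooses the coordinates $x_1,\dots,x_l$ to be $l$ numbers that are linearly independent over $\Q$, whence $\sum_{i=1}^l a_i^{j_h}x_i\neq 0$ holds automatically for every nonzero rational coefficient vector. This avoids any covering argument and yields a single \emph{universal} witness $x$ that works simultaneously for all (even infinitely many) rational constraint systems, rather than one depending on the particular hyperplanes; that uniformity is convenient in the paper's computational setting, where the same generic point can be reused across all the realizability checks. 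Both routes are sound, and your observation that the $\vee$-structure lets you pick just one index $j_h$ per $F_h$ before applying the existence argument is exactly the reduction the paper performs as well.
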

\begin{proof}
  If there exists an index $1\le h\le f$ with $\left(a_1^j,\dots,a_l^j\right)=\mathbf{0}$ for all $j\in F_h$, then no $x\in\R^l$ can 
  satisfy $\sum_{i=1}^l a_i^j x_i\neq 0$ for an index $j\in F_h$. Otherwise, let $j_h\in F_h$ denote an index with $\left(a_1^{j_h},\dots,a_l^{j_h}\right)\neq\mathbf{0}$ 
  for each $1\le h\le f$. Now choose the $x_i$ as $l$ $\Q$-linearly independent numbers. Then we have $\sum_{i=1}^l a_i^{j_h} x_i\neq 0$, since 
  $\{0\}\cup \left\{ a_i^{j_h} \,:\, 1\le i\le l\right\}\subset\Q$, for all $1\le h\le f$. 
\end{proof}
\begin{example}
  \label{example_hamming_weight_6_sets}
  The oriented square set of order $9$ given by $S=\big\{(1,3,4,2),(2,5,9,8),(4,7,6,5)\big\}$ is realizable, see Figure~\ref{figure_p_9_3_1}. Over the reals, the 
  solution space is six-dimensional. Note that the realization depicted in Figure~\ref{figure_p_9_3_1} is on the integer grid. However, it is also possible 
  to choose the side lengths of the three squares as $1$, $e$, and $\pi$, so that no similar point set $\cP'\subset\Q^2$ exists.
  
  Let us add $(3,6,8,10)$ to $S$, i.e., we consider the oriented square set of order $10$ given by $S'=\big\{(1,3,4,2),(2,5,9,8),(4,7,6,5),(3,6,8,10)\big\}$. 
  Over the reals, the solution space of $L(10,S')$ is $4$-dimensional. 
  An integer realization is given by
  $$
    \cP\big\{(0,5),(5,5),(0,0),(5,0),(7,1),(8,-1),(6,-2),(9,7),(11,3),(1,8) \big\}\subset\Z^2.
  $$ 
    
  If we further add $(1,10,7,9)$ to $S'$ and denote the resulting oriented square set by $S''$, then $L(10,S'')$ admits a $2$-dimensional solution space over the reals.  
  From Lemma~\ref{lemma_neq_zero_constraints} we conclude that $S''$ is non-realizable. 
\end{example}

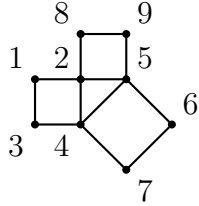
\begin{figure}[htp!]
\begin{center}
\begin{tikzpicture}[scale=0.6]
  \draw (0,0) node[anchor=north east]{$3$};
  \draw (0,1) node[anchor=south east]{$1$};
  \draw (1,0) node[anchor=north east]{$4$};
  \draw (1,1) node[anchor=south east]{$2$};
  \draw (1,2) node[anchor=south east]{$8$};
  \draw (2,2) node[anchor=south west]{$9$};
  \draw (2,1) node[anchor=south west]{$5$};
  \draw (3,0) node[anchor=south west]{$6$};
  \draw (2,-1) node[anchor=north west]{$7$};
  \fill[black] (0,0)circle(2.5pt);
  \fill[black] (0,1)circle(2.5pt);
  \fill[black] (1,0)circle(2.5pt);
  \fill[black] (1,1)circle(2.5pt);
  \fill[black] (1,2)circle(2.5pt);
  \fill[black] (2,2)circle(2.5pt);
  \fill[black] (2,1)circle(2.5pt);
  \fill[black] (3,0)circle(2.5pt);
  \fill[black] (2,-1)circle(2.5pt);
  \draw[thick](0,0)--(1,0)--(1,1)--(0,1)--(0,0);
  \draw[thick](1,1)--(1,2)--(2,2)--(2,1)--(1,1);
  \draw[thick](1,0)--(2,1)--(3,0)--(2,-1)--(1,0);
\end{tikzpicture}
\caption{A non-rigid $9$-point set $\cP_{9,3}^1$ with three squares.}
\label{figure_p_9_3_1}
\end{center}
\end{figure}

Our next goal is to show that for each $n$-point set $\cP\subset\R^2$ there exists an $n$-point set $\cP'\subset\Q^2$ with $S_{\square}(\cP')\ge S_{\square}(\cP)$. 
Let $S$ denote a maximal oriented square set of $\cP$. By assumption, $L(n,S)$ admits a solution $\left(z_1,\dots,z_n\right)\in\C^n$ with $z_j\neq z_k$ for all 
$1\le j<k\le n$. Since the space of rational solutions is dense in the space of complex solutions and $z_j\neq z_k$ are open conditions, there also exists 
a solution $\left(z_1',\dots,z_n'\right)\in (\Q[i])^n$ with $z_j'\neq z_k'$ for all $1\le j<k\le n$. Just for illustration, we give an explicit construction:    
\begin{lemma}
  \label{lemma_rational_solution_satisfying_inequalities}
  Let $A\in \Q^{m\times n}$, $B\in \Q^{l\times n}$, and $x\in \R^n$ satisfying $Ax=0$ and $Bx\neq 0$. For each $\varepsilon>0$ there exists a vector $\tilde{x}\in \Q^n$ 
  such that $A\tilde{x}=0$, $B\tilde{x}\neq 0$, and $\Vert x-\tilde{x}\Vert_\infty<\varepsilon$. 
\end{lemma}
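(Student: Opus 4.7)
The plan is to exploit that $\ker A$ has a rational basis and that the strict inequalities encoded by $B\tilde{x}\neq 0$ (meant componentwise) are open conditions, so small rational perturbations preserve them.

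First I would pick a basis $v_1,\dots,v_k\in\Q^n$ of $\ker_\R A=\R\cdot\ker_\Q A$, which exists because Gaussian elimination on the rational matrix $A$ produces a rational basis spanning the real kernel as well. Since $Ax=0$, I can write $x=\sum_{i=1}^k \lambda_i v_i$ for uniquely determined $\lambda_i\in\R$. Define the map $\Phi\colon\R^k\to\R^n$ by $\Phi(\mu)=\sum_{i=1}^k \mu_i v_i$, so that $\Phi$ is linear with rational matrix representation, $\Phi(\lambda)=x$, and $A\Phi(\mu)=0$ for every $\mu\in\R^k$.

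Next I would choose the rational approximant. For each row $b_j$ of $B$, the real-valued function $\mu\mapsto b_j\Phi(\mu)$ is continuous and nonzero at $\mu=\lambda$ (since $(Bx)_j\neq 0$ by assumption). Hence there exists some $\delta>0$ such that whenever $\|\mu-\lambda\|_\infty<\delta$ we simultaneously have $b_j\Phi(\mu)\neq 0$ for all $j=1,\dots,l$, and $\|\Phi(\mu)-x\|_\infty<\varepsilon$ (using $\|\Phi(\mu)-x\|_\infty\le \|\mu-\lambda\|_\infty\cdot\sum_{i=1}^k\|v_i\|_\infty$). By density of $\Q$ in $\R$ I can pick $\tilde\mu\in\Q^k$ with $\|\tilde\mu-\lambda\|_\infty<\delta$, and then $\tilde{x}:=\Phi(\tilde\mu)\in\Q^n$ (since $\Phi$ has a rational matrix) satisfies $A\tilde{x}=0$, $B\tilde{x}\neq 0$ componentwise, and $\|x-\tilde{x}\|_\infty<\varepsilon$, as required.

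No genuine obstacle is expected here: the only mildly subtle point is that one needs the rational kernel to span the real kernel, which follows from the rationality of $A$ via echelon form. Everything else is a routine continuity and density argument, and the lemma could equivalently be derived from Lemma~\ref{lemma_neq_zero_constraints} by applying that lemma after an affine rational change of coordinates that sends $\ker_\Q A$ to a coordinate subspace.
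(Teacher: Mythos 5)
Your proposal is correct and takes essentially the same route as the paper's proof: both parameterize the real kernel of $A$ by a rational basis (equivalently, by free variables with rational dependency coefficients) and perturb the free parameters to nearby rationals, so that $A\tilde{x}=0$ holds automatically and the conditions $B\tilde{x}\neq 0$ and $\Vert x-\tilde{x}\Vert_\infty<\varepsilon$ survive the small perturbation. The only cosmetic difference is that the paper carries out the openness argument with explicit constants ($\Lambda$, $\beta$, $\mu$) instead of your continuity-and-density phrasing.
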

\begin{proof}
  Choose a basis $S\subseteq \{1,\dots,n\}$ of $\left\{x\in\R^n \,:\, Ax=0\right\}$, i.e., choose $\lambda_i^j$ for all $j\in S$ and all $i\in \bar{S}:=\{1\le i\le n\,:\, i\notin S\}$
  such that
  $$
    \left\{x\in\R^n\,:\, Ax=0\right\}=\left\{x\in R^n\,:\, x_j=\sum_{i\in  \bar{S}} \lambda_i^j x_i\quad \forall j\in S\right\}.
  $$  
  Note that $A\in\Q^{m\times n}$ implies $\lambda_i^j\in\Q$. Set $\Lambda:=\max\left\{ \left|\lambda_i^j\right|\,:\, i\in\bar{S}, j\in S\right\}$, $\mu:=\Vert Bx\Vert_\infty$, 
  and $\beta:=\max\left\{ \left|b_{i,j}\right|\,:\, 1\le i\le n, 1\le j\le l\right\}$, where $B=(b_{i,j})$. W.l.o.g.\ we assume that $\beta n\varepsilon<\mu$. Set $\varepsilon'
  :=\min\!\left\{\varepsilon,\frac{\varepsilon}{n\Lambda}\right\}$ and choose 
  $\tilde{x}_i\in \Q$ such that $\vert x_i-\tilde{x}_i\vert<\varepsilon'\le \varepsilon$ for all $i\in\bar{S}$. For $j\in S$ we set $\tilde{x}_j=\sum_{i\in  \bar{S}} \lambda_i^j \tilde{x}_i$, 
  so that 
  $$
    \left\vert x_j-\tilde{x}_j\right\vert=\big\vert\sum_{i\in\bar{S}} \lambda_i^j \cdot\left(x_i-\tilde{x}_i\right)\big\vert \le 
    \sum_{i\in\bar{S}} \big\vert \lambda_i^j \cdot\left(x_i-\tilde{x}_i\right) \big\vert< n\Lambda\varepsilon'\le \varepsilon, 
  $$   
  which implies $\Vert x-\tilde{x}\Vert_\infty<\varepsilon$. Next we compute
  $$
    \Vert Bx-B\tilde{x}\Vert_\infty= \max_{1\le j\le l} \left|\sum_{i=1}^n b_{i,j} \cdot \left(x_i-\tilde{x}_i\right) \right| \le \beta\cdot\sum_{i=1}^n \left| x_i-\tilde{x}_i\right| 
    <\beta n\varepsilon<\mu,  
  $$
  so that $B\tilde{x}\neq 0$.
\end{proof}

\begin{theorem}
  \label{thm_grid_assumption}
  For each $n$-point set $\cP\subset\R^2$ there exists an $n$-point set $\cP'\subset\Q^2$ with $S_{\square}(\cP')\ge S_{\square}(\cP)$.  
\end{theorem}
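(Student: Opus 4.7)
The plan is to pass through the combinatorial relaxation developed above. Given $\cP=\{P_1,\dots,P_n\}\subset\R^2$, identify each $P_j$ with $z_j\in\C$ and let $S$ be the maximal oriented square set of $\cP$, so that $\#S=S_{\square}(\cP)$ and $(z_1,\dots,z_n)$ is a non-degenerate solution of the linear equation system $L(n,S)$ by Lemma~\ref{lemma_criterium_realizable}. Writing $z_j=x_j+iy_j$, the system $L(n,S)$ becomes a homogeneous linear system $A\xi=0$ with integer coefficient matrix $A$ in the $2n$ real unknowns $\xi=(x_1,y_1,\dots,x_n,y_n)$, and non-degeneracy translates into the statement that for every $1\le j<k\le n$, at least one of the two linear forms $x_j-x_k$ or $y_j-y_k$ evaluates to a nonzero value on $\xi$.

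It therefore suffices to produce a rational vector $\tilde\xi\in\Q^{2n}$ with $A\tilde\xi=0$ that preserves these $\binom{n}{2}$ disjunctive non-collision conditions, since the corresponding points $\cP'\subset\Q^2$ then realize every square in $S$ and thus satisfy $S_{\square}(\cP')\ge\#S=S_{\square}(\cP)$. For each pair $j<k$ I would fix one of the forms $x_j-x_k,\,y_j-y_k$ that is already nonzero on $\xi$, call it $\ell_{jk}$, and collect these into a matrix $B\in\Q^{\binom{n}{2}\times 2n}$; every coordinate of $B\xi$ is then nonzero by construction.

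I would now apply a componentwise sharpening of Lemma~\ref{lemma_rational_solution_satisfying_inequalities}: choose $\varepsilon$ strictly less than $\min_{j<k}|\ell_{jk}(\xi)|/(\beta n)$, where $\beta$ bounds the entries of $B$, and run the construction in the proof of that lemma verbatim to obtain $\tilde\xi\in\Q^{2n}$ with $A\tilde\xi=0$ and $\|\xi-\tilde\xi\|_\infty<\varepsilon$. Then $\|B\xi-B\tilde\xi\|_\infty<\min_{j<k}|\ell_{jk}(\xi)|$, so every coordinate of $B\tilde\xi$ is still nonzero; the points $z_j'=\tilde x_j+i\tilde y_j$ are pairwise distinct Gaussian rationals and $\cP'=\{(\tilde x_1,\tilde y_1),\dots,(\tilde x_n,\tilde y_n)\}\subset\Q^2$ is the desired point set. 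The only subtlety is reducing each disjunctive condition $z_j\ne z_k$ to a single rational linear form that is nonzero on $\xi$, which is immediate from non-degeneracy; the quantitative perturbation is then essentially the one already performed in Lemma~\ref{lemma_rational_solution_satisfying_inequalities}, so no real obstacle arises.
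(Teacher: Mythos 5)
Your proposal is correct and follows essentially the same route as the paper: pass to the maximal oriented square set $S$, view $\cP$ as a non-degenerate solution of $L(n,S)$, and perturb to a rational solution via Lemma~\ref{lemma_rational_solution_satisfying_inequalities}. You merely make explicit the ``suitable transformation'' the paper leaves implicit, namely selecting for each pair $j<k$ a single linear form ($x_j-x_k$ or $y_j-y_k$) that is nonzero and noting that the lemma's perturbation argument preserves all such forms componentwise for small enough $\varepsilon$.
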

\begin{proof}
  Let $S$ be the maximal oriented square set of order $n$ of $\cP$. By Lemma~\ref{lemma_criterium_realizable} the existing realization 
  $\cP$ corresponds to a solution $\left(z_1,\dots,z_n\right)\in\C^n$ of $L(n,S)$ satisfying $z_j\neq z_k$ for all $1\le j<k \le n$. After a 
  suitable transformation we can apply Lemma~\ref{lemma_rational_solution_satisfying_inequalities} to construct a solution $\left(z_1',\dots,z_n'\right)\in (\Q(i))^n$  
  of $L(n,S)$ satisfying $z_j'\neq z_k'$ for all $1\le j<k \le n$. From Lemma~\ref{lemma_criterium_realizable} we then conclude that 
  $\cP':=\left\{z_1',\dots,n_n'\right\} \subset\Q(i)$ realizes $s$. Thus, we have $S_{\square}(\cP')\ge\#S=S_{\square}(\cP)$.    
\end{proof}
We remark that the proof of Theorem~\ref{thm_grid_assumption} and the parameter $\varepsilon$ in Lemma~\ref{lemma_rational_solution_satisfying_inequalities} 
would allow the stronger statement that we can assume the existence of a pairing between the points of $\cP$ and the points of $\cP'$ where the pairs of points 
have distance at most $\varepsilon$ in the $\Vert\cdot\Vert_\infty$-metric, i.e., $\cP'$ arises from $\cP$ by a sufficiently small perturbation.

\begin{corollary}
  \label{corollary_grid_assumption}
  For each $n$-point set $\cP\subset\R^2$ there exists an $n$-point set $\cP'\subset\Z^2$ with $S_{\square}(\cP')\ge S_{\square}(\cP)$.  
\end{corollary}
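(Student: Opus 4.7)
The plan is to use Theorem~\ref{thm_grid_assumption} as a black box and then simply clear denominators by a uniform scaling. Concretely, I would first apply Theorem~\ref{thm_grid_assumption} to $\cP\subset\R^2$ to obtain an $n$-point set $\cP'\subset\Q^2$ with $S_{\square}(\cP')\ge S_{\square}(\cP)$.

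Next, since $\cP'$ is finite, there are only finitely many rational coordinates in total, so I can choose a common positive integer $d$ (for instance the least common multiple of the denominators of the coordinates of all points in $\cP'$) such that $d\cdot q\in \Z$ for every coordinate $q$ occurring in $\cP'$. Define $\cP'':=\{d\cdot z \,:\, z\in \cP'\}\subset\Z^2$. The map $z\mapsto d\cdot z$ is a similarity (in fact a homothety centered at the origin with ratio $d>0$), so it preserves the property of four points forming a square; thus $S_{\square}(\cP'')=S_{\square}(\cP')\ge S_{\square}(\cP)$, and moreover $\#\cP''=\#\cP'=n$ since the map is injective.

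There is no real obstacle here: the only content is that Theorem~\ref{thm_grid_assumption} already delivers rational coordinates, and rational to integer is a one-step clearing of denominators using the invariance of the {\lq\lq}is a square{\rq\rq} property under the similarity $z\mapsto d z$. So the corollary follows immediately, without any further use of Lemma~\ref{lemma_rational_solution_satisfying_inequalities} or of the equation system $L(n,S)$ beyond what is packaged into Theorem~\ref{thm_grid_assumption}.
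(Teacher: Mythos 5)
Your proposal is correct and matches the paper's (implicit) argument: the corollary is stated without proof precisely because it follows from Theorem~\ref{thm_grid_assumption} by clearing denominators via the similarity $z\mapsto dz$, which preserves squares. Spelling out the choice of $d$ as a common denominator is exactly the intended one-step reduction.
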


\begin{corollary}
  For each $n$-point set $\cP\subset\R^2$ obtained by $2$-extension there exists an $n$-point set $\cP'\subset\Z^2$ with $S_{\square}(\cP')= S_{\square}(\cP)$.
\end{corollary}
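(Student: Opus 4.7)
The plan is to induct on the number $k$ of $2$-extensions used to build $\cP$ from the unit square $\cP_{4,1}^1$, maintaining the invariant that, after a suitable similarity, the current point set lies in $\Z^2$. Since every similarity of $\R^2$ induces a bijection on quadruples forming squares, it preserves $S_{\square}(\cdot)$ exactly, so applying the final similarity yields a $\cP'\subset\Z^2$ with $S_{\square}(\cP')=S_{\square}(\cP)$, which is the desired statement.

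The base case $k=0$ is immediate: $\cP_{4,1}^1$ is the unit square and already lies in $\Z^2$. For the inductive step, by the induction hypothesis we may assume the intermediate point set $\cP^{(k)}$ lies in $\Z^2$. A $2$-extension selects a pair $\{p_1,p_2\}\subseteq \cP^{(k)}$ and completes one of the three squares through $\{p_1,p_2\}$ by adding up to two new candidate points $p_3,p_4$. Lemma~\ref{lemma_two_points_determine_coordinates} gives explicit coordinates for $p_3,p_4$: in the edge case (b) the coordinates are integer sums and differences of the coordinates of $p_1,p_2$, so $p_3,p_4\in\Z^2$; in the diagonal case (a) they are arithmetic means of integer coordinates, so $p_3,p_4\in\tfrac{1}{2}\Z^2$. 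Either way, $\cP^{(k+1)}\subset\tfrac{1}{2}\Z^2$, and a uniform dilation by $2$ — a similarity — returns it to $\Z^2$ while preserving the number of squares. (If a candidate point already lies in $\cP^{(k)}$, the invariant is trivially maintained.)

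There is essentially no obstacle: the crux is simply that Lemma~\ref{lemma_two_points_determine_coordinates} introduces a denominator of at most $2$ per extension, which is cleared by a single dilation; iterating, after $k$ steps a dilation by $2^k$ gives the integer realization. Unlike Theorem~\ref{thm_grid_assumption}, where the rational approximation argument only yields the inequality $S_{\square}(\cP')\ge S_{\square}(\cP)$ (since small perturbations could in principle alter coincidences), here we apply exact similarities throughout, so equality is automatic.
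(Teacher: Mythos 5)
Your proof is correct and takes essentially the same approach the paper intends: the paper gives no explicit proof of this corollary, but its surrounding discussion (the remark after Lemma~\ref{lemma_two_points_determine_coordinates} that the diagonal case (a) may require scaling by a factor of two to stay within the integer grid) is exactly your induction, with a dilation clearing the single denominator of $2$ introduced per extension step. Since you use exact similarities throughout, equality $S_{\square}(\cP')=S_{\square}(\cP)$ follows immediately, as you note.
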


\begin{remark}
  Theorem~\ref{thm_grid_assumption} can be directly generalized to: For each $n$-point set $\cP\subset\R^2$ and each $\emptyset\neq \cQ\subset \Q^2$ there exists 
  an $n$-point set $\cP'\subset\Q^2$ with $S_{\square}(\cP')\ge S_{\square}(\cP)$. I.e., one can choose $\cQ$ as an isosceles right triangle $\rit$. 
  
  The approach, based on linear equation systems over the rationals, also works for e.g.\ rectangles or axis-parallel squares and rectangles. For an equilateral 
  triangle $\cQ$ we have a similar statement replacing the Gaussian rationals (integers) by the Eisenstein rationals (integers).
\end{remark}

\begin{proposition}
  \label{prop_grid_size_bound}
  For each $n$-point set $\cP\subset\R^2$ there exists an $n$-point set $\cP'\subset\left\{0,1,\dots,\Lambda\right\}^2$ with $S_{\square}(\cP')\ge S_{\square}(\cP)$ 
  and $\Lambda\le 25^n$.  
\end{proposition}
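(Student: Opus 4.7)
The plan is to quantitatively refine Corollary~\ref{corollary_grid_assumption}. Let $S$ be the maximal oriented square set of $\cP$ and set $m = S_{\square}(\cP) = \#S$. Splitting each of the $2m$ complex equations of $L(n,S)$ into real and imaginary parts via $z_j = x_j + iy_j$ yields a real coefficient matrix $A \in \Z^{4m \times 2n}$. A direct expansion shows that every row of $A$ has exactly four nonzero entries, each equal to $\pm 1$, so every row of $A$ has Euclidean norm $2$.

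Let $r = \operatorname{rank} A$ and $l = 2n - r$. First, I would construct an integer basis of $\ker A$ with small entries. Fix an $r \times r$ invertible submatrix $A_0$ of $A$ supported on bound columns $J_b \subset \{1,\dots,2n\}$, and for each free column index $i \in J_f := \{1,\dots,2n\} \setminus J_b$ apply Cramer's rule to obtain an integer kernel vector $v^{(i)} \in \Z^{2n}$. Each entry of $v^{(i)}$ is, up to sign, the determinant of an $r \times r$ submatrix of $A$ whose rows are those used for $A_0$, restricted to a column subset of size $r$. Since restriction only removes entries, every such restricted row still has at most four $\pm 1$ entries and hence $\ell_2$-norm at most $2$, so Hadamard's inequality gives $\|v^{(i)}\|_\infty \leq 2^r \leq 4^n$.

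Next, for integer coefficients $\mu \in \Z^l$, consider $z(\mu) := \sum_{i \in J_f} \mu_i v^{(i)}$ and the candidate point set $\{z_1(\mu),\dots,z_n(\mu)\}$. For each pair $j \neq k$, the constraint $z_j(\mu) = z_k(\mu)$ cuts out a proper linear subspace $W^{(j,k)} \subsetneq \R^l$, proper because $\cP$ itself witnesses a real solution outside $W^{(j,k)}$. Any proper subspace of $\R^l$ contains at most $(M+1)^{l-1}$ points of $\{0,1,\dots,M\}^l$, so a union bound over the $\binom{n}{2}$ pairs supplies a good $\mu \in \{0,1,\dots,M\}^l$ whenever $M + 1 > \binom{n}{2}$; the choice $M = \binom{n}{2}$ works. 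The resulting $\cP' := \{z_1(\mu),\dots,z_n(\mu)\} \subset \Z^2$ has distinct points and, by Lemma~\ref{lemma_criterium_realizable}, realizes $S$, so $S_{\square}(\cP') \geq m$.

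Finally, each coordinate of $\cP'$ is bounded in absolute value by $M \cdot l \cdot 4^n \leq \tfrac{n^2}{2} \cdot 2n \cdot 4^n = n^3 \cdot 4^n$, and an integer translation places $\cP'$ inside $\{0,1,\dots,\Lambda\}^2$ with $\Lambda \leq 2n^3 \cdot 4^n$. A direct check gives $2n^3 \cdot 4^n \leq 25^n$ for every $n \geq 1$, completing the argument. The most delicate point is the Hadamard estimate on the column-replaced determinants that define the $v^{(i)}$: it relies crucially on the fact that each row of $A$ has exactly four nonzero entries in $\{\pm 1\}$, so that restriction or column replacement can never push any row norm above $2$.
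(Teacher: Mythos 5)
Your proof is correct, but it takes a genuinely different route from the paper's. The paper also passes to the real form of $L(n,S)$, but then augments the homogeneous system with explicit separation constraints $x_j-x_k\ge 1$ or $y_j-y_k\ge 1$ (read off from the given realization after normalizing so that every pair of points is separated by at least $1$ in some coordinate), minimizes $\sum_j x_j+\sum_j y_j$ over the resulting feasible, bounded LP, and takes a basic solution: Cramer's rule plus the Leibniz formula applied to the square subsystem (each row has at most five entries of absolute value at most one after the column replacement by $b$) bounds all numerators by $5^{2n}=25^n$, and multiplying through by $\det(A)$ clears denominators while preserving the homogeneous square equations and the strict separations. You instead work entirely with the homogeneous kernel: you build an integer basis of $\ker A$ with entries bounded by $2^{r}\le 4^n$ via Hadamard (using that every row of $A$ has exactly four $\pm 1$ entries), and you recover the nondegeneracy $z_j\neq z_k$ not from the geometry of the original realization but by a counting argument -- each coincidence $z_j(\mu)=z_k(\mu)$ confines $\mu$ to a proper subspace of $\R^l$ (proper because the original realization lies in the real span of your basis and separates $j$ from $k$), which meets $\{0,\dots,M\}^l$ in at most $(M+1)^{l-1}$ points, so $M=\binom{n}{2}$ leaves a good $\mu$ by a union bound. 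Both arguments are sound; yours dispenses with the LP machinery (feasibility, boundedness, existence of a basic solution) and gives the sharper bound $\Lambda\le 2n^3\cdot 4^n\le 25^n$, at the cost of producing an integer realization that is no longer tied to a small perturbation or rescaling of the original point set -- which is irrelevant for the statement as written.
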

\begin{proof}
  Let $S$ be the maximal oriented square set of order $n$ of $\cP$. By Lemma~\ref{lemma_criterium_realizable} the existing realization 
  $\cP$ corresponds to a solution $\left(z_1,\dots,z_n\right)\in\C^n$ of $L(n,S)$ satisfying $z_j\neq z_k$ for all $1\le j<k \le n$. 
  W.l.o.g.\ we assume that real and imaginary parts of the $z_j$ are non-negative and that 
  $$
   \max\!\big\{ \left|\operatorname{Re}(z_j)-\operatorname{Re}(z_k)\right|, \left|\operatorname{Im}(z_j)-\operatorname{Im}(z_k)\right|\big\}  \ge 1
  $$  
  for all $1\le j<k\le n$. Consider the following linear program with variables $x_j,y_j\in \R_{\ge0 }$ for $1\le j\le n$. We convert the equations 
  of $L(n,S)$ into their real counterparts using the $x_j$ and $y_j$ variables ($4m$ equations). 
  If $\operatorname{Re}(z_j)-\operatorname{Re}(z_k)\ge 1$ we add the constraint $x_j-x_k\ge 1$ and if $\operatorname{Re}(z_j)-\operatorname{Re}(z_k)\le -1$ 
  we add the constraint $x_k-x_j\ge 1$, where $1\le j<k\le n$. Similarly, If $\operatorname{Im}(z_j)-\operatorname{Im}(z_k)\ge 1$ we add the constraint 
  $y_j-y_k\ge 1$ and if $\operatorname{Im}(z_j)-\operatorname{Im}(z_k)\le -1$ we add the constraint $y_k-y_j\ge 1$, where $1\le j<k\le n$. As target we choose 
  the minimization of $\sum_{j=1}^n x_j\,+\,\sum_{j=1}^n y_j$, so that the LP is bounded. The existence of the solution $\left(z_1,\dots,z_n\right)\in\C^n$ 
  implies that the LP is also feasible. Note that all coefficients of the LP formulation are contained in $\{-1,0,1\}$ and that each constraint contains at 
  most four non-zero coefficients on the left-hand side. Consider a basic solution of the LP, i.e., a solution of the uniquely solve able equation 
  system $A\cdot (x,y)^\top=b$, where $A$ is a suitable submatrix of the coefficient matrix of the LP and $b$ a suitable subvector of the corresponding right hand 
  side. For a variable $x_j$ (or $y_j$) let $A^{x_j}$ (or $A^{y_j}$) denote the matrix arising from $A$ when the column corresponding to $x_j$ (or $y_j$) is replaced by 
  $b$. With this Cramer's rule yields $x_j=\det(A^{x_j})/\det(A)$ and $y_j=\det(A^{y_j})/\det(A)$ for all $1\le j\le n$. Using the 
  Leibniz formula for determinants we conclude $|\det(A^{x_j})|,|\det(A^{y_j})|\le 5^{2n}$ for all $1\le j\le n$ (and $|\det(A)|\le 4^{2n}$). Now observe 
  that $\tilde{x}_j:=x_j\cdot \det(A)=\det(A^{x_j})$, $\tilde{y}_j:=y_j\cdot \det(A)=\det(A^{y_j})$ is also a solution with $\tilde{x}_j,\tilde{y}_j\in\N$ and 
  $\left|\tilde{x}_j\right|,\left|\tilde{y}_j\right|\le 25^n$, where $1\le j\le n$.          
\end{proof}

Having the combinatorial structure of oriented square sets at hand, we can also treat non-rigid $n$-point sets using a finite number of cases only. We have to abandon the 
idea of classifying point sets up to similarity and consider equivalence classes of point sets distinguished by different oriented square sets. This causes some complications but allows 
us to determine $S_{\square}(n)$ exactly by a finite amount of computation, depending on $n$ -- in principle.   
Note that for the unit distance problem $C_{\left\{q_1,q_2\right\}}(n)$ mentioned in the introduction, some extremal examples 
are not rigid, see e.g.\ \cite[Section 5.1]{RPiDG} or \cite{diplomarbeit_schade}. However, since the numbers in Table~\ref{table_iso_types_exhaustive_2_extension} provide a 
lower bound for the number of non-isomorphic realizable connected oriented square sets with order $n$ and cardinality $m$, exhaustive enumerations will become computationally 
infeasible if $n$ gets too large. So, in the subsequent section develop tools and criteria that allow us to obtain classification results for special 
values of $n$ and $m$ without having the full classification for all $n'<n$ and $m'<m$ at hand. 

\section{Bounds for $S_{\square}(n)$}
\label{sec_bounds_squares}
Let $\preceq$ denote the lexicographical ordering on $\R^2$ and $\cP\subset\R^2$ be an arbitrary point set. Assume that the vertices of a square $s$ in $\cP$ are 
given by $v_1\prec v_2\prec v_3\prec v_4$. With this, we denote the six pairs of vertices by $e_1:=\left\{v_1,v_2\right\}$, $e_2:=\left\{v_1,v_3\right\}$, 
$e_3:=\left\{v_2,v_4\right\}$, $e_4:=\left\{v_3,v_4\right\}$, $d_1:=\left\{v_1,v_4\right\}$, and $d_2:=\left\{v_2,v_3\right\}$. Observe that the 
$e_i$ form the edges and the $d_i$ form the diagonals of $s$. We call $e_1$ the \emph{leftmost edge} and $e_2$ the \emph{second leftmost edge} of $s$. In the 
following auxiliary result we determine the possible types of pairs of points in their three squares where they are contained, cf.~Figure~\ref{fig_lex_ordering}. 

\begin{lemma}
  Let $a=(0,0)$, $b_1=(u,v)$, and $b_2=(v,-u)$.
  \begin{enumerate}
    \item[(1)] For  $0\le u<v$ the squares through $\left\{a,b_1\right\}$ are given by vertices $a\prec b_1\prec b_2 \prec (u+v,v-u)$, 
               $(-v,u)\prec(u-v,u+v)\prec a\prec b_1$, and $\tfrac{1}{2}(u-v,u+v)\prec a\prec b_1\prec \tfrac{1}{2} (u+v,v-u)$. The corresponding types 
               of $\left\{a,b_1\right\}$ are $e_1$, $e_4$, and $d_2$, respectively.\\[-6mm] 
    \item[(2)] For  $0\le u<v$ the squares through $\left\{a,b_2\right\}$ are given by vertices $a\prec b_1\prec b_2 \prec (u+v,v-u)$, 
               $(-u,-v)\prec a \prec (v-u,-u-v)\prec b_2$, and $a\prec \tfrac{1}{2}(v-u,-u-v)\prec \tfrac{1}{2} (u+v,v-u)\prec b_2$. The corresponding types 
               of $\left\{a,b_2\right\}$ are $e_2$, $e_3$, and $d_1$, respectively..\\[-6mm]
    \item[(3)] For  $0<v\le u$ the squares through $\left\{a,b_1\right\}$ are given by vertices $a\prec b_2\prec b_1\prec (u+v,v-u)$, $(-v,u)
               \prec a\prec (u-v,u+v)\prec b_1$, and $a\prec \tfrac{1}{2}(u-v,u+v)\prec b_1\prec \tfrac{1}{2} (u+v,v-u)$. The corresponding types 
               of $\left\{a,b_1\right\}$ are $e_2$, $e_3$, and $d_1$, respectively..\\[-6mm]            
    \item[(4)] For  $0<v\le u$ the squares through $\left\{a,b_2\right\}$ are given by vertices $a\prec b_2\prec b_1 \prec (u+v,v-u)$, 
               $(-u,-v)\prec (v-u,-u-v)\prec a \prec b_2$, and $\tfrac{1}{2}(v-u,-u-v)\prec a\prec b_2\prec \tfrac{1}{2} (u+v,v-u)$. The corresponding types 
               of $\left\{a,b_2\right\}$ are $e_1$, $e_4$, and $d_2$, respectively.                        
  \end{enumerate}    
\end{lemma}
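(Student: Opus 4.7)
The plan is to apply Lemma~\ref{lemma_two_points_determine_coordinates} directly. Given two points $P_1,P_2$, that lemma lists the three squares through $\{P_1,P_2\}$: one in which $\{P_1,P_2\}$ is a diagonal (part~(a)) and two in which $\{P_1,P_2\}$ is an edge (part~(b), one per choice of sign). For each of the four cases I would write out the four vertices of each such square in closed form in $u,v$, sort them lexicographically using the hypothesis of the case, and then read off which of the six positional types $e_1,e_2,e_3,e_4,d_1,d_2$ the distinguished pair occupies.

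In case~(1), with $0\le u<v$, the three squares through $\{a,b_1\}$ are the $+$ edge-square with vertex set $\{a,b_1,b_2,(u+v,v-u)\}$, the $-$ edge-square $\{(-v,u),(u-v,u+v),a,b_1\}$, and the diagonal-square $\{\tfrac{1}{2}(u-v,u+v),\,a,\,b_1,\,\tfrac{1}{2}(u+v,v-u)\}$. In the first square the chain of first coordinates $0\le u<v<u+v$ (with the $u=0$ tiebreak $0<v$ in the second coordinate) gives $a\prec b_1\prec b_2\prec(u+v,v-u)$, so $\{a,b_1\}=\{v_1,v_2\}$ and the type is $e_1$. In the second square, $-v\le u-v<0\le u$ forces $(-v,u)\prec(u-v,u+v)\prec a\prec b_1$, giving type $e_4$. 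In the third, $(u-v)/2\le 0\le u\le (u+v)/2$ gives $\tfrac{1}{2}(u-v,u+v)\prec a\prec b_1\prec\tfrac{1}{2}(u+v,v-u)$, so $\{a,b_1\}=\{v_2,v_3\}$ and the type is $d_2$.

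Case~(2) re-examines the same three squares through the pair $\{a,b_2\}$ instead: in the lex orders just computed, $\{a,b_2\}$ occupies positions $\{v_1,v_3\}$, $\{v_2,v_4\}$, and $\{v_1,v_4\}$, which are the types $e_2$, $e_3$, $d_1$. Cases~(3) and~(4) are obtained by the analogous computation under $0<v\le u$; the only change is that now $b_2$ has strictly smaller (or equal-and-then-smaller) first coordinate than $b_1$, so $b_2\prec b_1$ and the roles of $b_1$ and $b_2$ in the vertex orderings are swapped. Tracking this swap shows that in case~(3) the three types match those of case~(2), namely $e_2,e_3,d_1$, and in case~(4) they match those of case~(1), namely $e_1,e_4,d_2$, exactly as stated.

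The only real work is bookkeeping: at the boundaries $u=0$ (for~(1) and~(2)) and $u=v$ (for~(3) and~(4)) the first coordinates of some pairs of vertices coincide, and the lexicographic tiebreak must be resolved in the second coordinate. The hypotheses $0\le u<v$ and $0<v\le u$ are exactly what is needed to ensure that the corresponding strict inequalities hold in the second coordinate. Once this is checked uniformly, the lemma follows by direct enumeration with no conceptual obstacle.
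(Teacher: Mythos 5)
The paper states this lemma without proof, so the intended argument is exactly the routine computation you describe: list the three squares through the given pair via Lemma~\ref{lemma_two_points_determine_coordinates}, sort the four vertices lexicographically under the case hypothesis, and read off the type. Your case~(1) is carried out correctly, including the observation that the only delicate points are the tie-breaks at $u=0$ and $u=v$, which the hypotheses resolve in the second coordinate.

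However, your shortcut for case~(2) is not valid as written. You claim that case~(2) ``re-examines the same three squares'' and read off the positions of $\left\{a,b_2\right\}$ from the lex orders computed in case~(1). Only the first square, $\left\{a,b_1,b_2,(u+v,v-u)\right\}$, is common to both pairs; the second and third squares through $\left\{a,b_2\right\}$ are $\left\{(-u,-v),a,(v-u,-u-v),b_2\right\}$ and $\left\{a,\tfrac{1}{2}(v-u,-u-v),\tfrac{1}{2}(u+v,v-u),b_2\right\}$, which are different squares from those in case~(1) --- indeed $b_2$ is not even a vertex of the second and third squares of case~(1), so its position in those orderings is undefined. The stated types $e_2,e_3,d_1$ are correct, but they must be obtained by sorting these new vertex sets, not by reusing the case-(1) chains. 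The same objection applies to your ``swap $b_1$ and $b_2$'' argument for cases~(3) and~(4): for the non-shared squares the sign of quantities like $u-v$ changes, which moves $a$ relative to the other vertices, so the orderings must be recomputed rather than obtained by a swap. All of this is repaired simply by executing the computation your opening paragraph promises. A careful execution would also reveal that the third chain displayed in part~(3) of the statement is misordered: for $0<v\le u$ one has $\tfrac{1}{2}(u+v,v-u)\preceq b_1$ (with the tie at $u=v$ broken by the second coordinate), so the correct chain is $a\prec\tfrac{1}{2}(u-v,u+v)\prec\tfrac{1}{2}(u+v,v-u)\prec b_1$; the claimed type $d_1$ is consistent only with this corrected ordering.
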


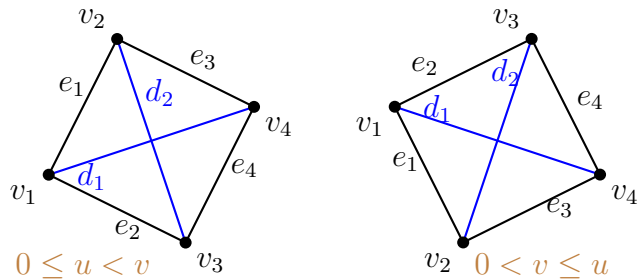
\begin{figure}[htp!]
\begin{center}
\begin{tikzpicture}[scale=0.9]
  \draw[thick](0,0)--(1,2)--(3,1)--(2,-1)--(0,0);
  \draw[thick,blue](0,0)--(3,1);
  \draw[thick,blue](1,2)--(2,-1);
  \fill[black] (0,0)circle(2.5pt);
  \fill[black] (1,2)circle(2.5pt);
  \fill[black] (3,1)circle(2.5pt);
  \fill[black] (2,-1)circle(2.5pt);
  \draw (0,0) node[black,anchor=north east]{$v_1$};
  \draw (1,2) node[black,anchor=south east]{$v_2$};
  \draw (3,1) node[black,anchor=north west]{$v_4$};
  \draw (2,-1) node[black,anchor=north west]{$v_3$};
  \draw (0.7,1) node[black,anchor=south east]{$e_1$};
  \draw (0.8,-0.51) node[black,anchor=north west]{$e_2$};
  \draw (1.9,1.4) node[black,anchor=south west]{$e_3$};
  \draw (2.5,-0.2) node[black,anchor=south west]{$e_4$};
  \draw (1,0.33) node[blue,anchor=north east]{$d_1$};
  \draw (1.25,0.85) node[blue,anchor=south west]{$d_2$};
  \draw (0.5,-1) node[brown,anchor=north]{$0\le u<v$};
\end{tikzpicture}
\quad
\begin{tikzpicture}[scale=0.9]
  \draw[thick](0,0)--(2,1)--(3,-1)--(1,-2)--(0,0);
  \draw[thick,blue](0,0)--(3,-1);
  \draw[thick,blue](2,1)--(1,-2);
  \fill[black] (0,0)circle(2.5pt);
  \fill[black] (2,1)circle(2.5pt);
  \fill[black] (3,-1)circle(2.5pt);
  \fill[black] (1,-2)circle(2.5pt);
  \draw (0,0) node[black,anchor=north east]{$v_1$};
  \draw (2,1) node[black,anchor=south east]{$v_3$};
  \draw (3,-1) node[black,anchor=north west]{$v_4$};
  \draw (1,-2) node[black,anchor=north east]{$v_2$};
  \draw (0.8,0.3) node[black,anchor=south east]{$e_2$};
  \draw (-0.2,-0.51) node[black,anchor=north west]{$e_1$};
  \draw (2.05,-1.78) node[black,anchor=south west]{$e_3$};
  \draw (2.5,-0.2) node[black,anchor=south west]{$e_4$};
  \draw (1,0.33) node[blue,anchor=north east]{$d_1$};
  \draw (1.25,0.15) node[blue,anchor=south west]{$d_2$};
  \draw (2.15,-2) node[brown,anchor=north]{$0<v \le u$};
\end{tikzpicture}
\caption{The lexicographical ordering of the vertices and the types of pairs of vertices of a square.}
\label{fig_lex_ordering}
\end{center}
\end{figure}

\begin{corollary}
  Let $\cP\subset \R^2$ be an arbitrary point set. No pair of different points in $\cP$ can form both the leftmost edge of a square and the second leftmost edge 
  of another square.
\end{corollary}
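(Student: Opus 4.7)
The plan is to read off the claim essentially verbatim from the preceding lemma. Fix an arbitrary pair $\{P,Q\}\subset\cP$ of distinct points with $P\prec Q$. Translating so that $P=(0,0)$ preserves the lex-ordering of every pair involved, and then $Q$ lies in the right half-plane or on the positive $y$-axis. Up to choosing whether to parameterize $Q$ as $b_1=(u,v)$ or $b_2=(v,-u)$, the four cases (1)--(4) of the preceding lemma exhaust all possibilities for the position of $Q$ relative to $P$, since the cases are indexed by the octant in which $Q$ lies.

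The key observation is then purely a matter of inspection of the lemma: in cases (1) and (4) the three squares through the pair give this pair the types $\{e_1,e_4,d_2\}$, while in cases (2) and (3) the three squares give the pair types $\{e_2,e_3,d_1\}$. These two triples are disjoint, so the type set of any fixed pair across all squares through it is a subset of one of the two. In particular, $e_1$ and $e_2$ never occur together, which is precisely the statement of the corollary.

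There is essentially no obstacle beyond the bookkeeping of verifying that the four parameter regimes $0\le u<v$ (with $b_1$ or $b_2$) and $0<v\le u$ (with $b_1$ or $b_2$) really do cover every direction of $Q$ from $P$ in the half-plane $P\prec Q$. This amounts to splitting the relevant half-plane into the four octants $\{0\le x<y\}$, $\{0<y\le x\}$, $\{0<-y\le x\}$, $\{0\le -y<x\}$ and matching them to the four cases of the lemma, which is immediate from the coordinates of $b_1$ and $b_2$ used there.
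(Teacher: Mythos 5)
Your approach is the intended one: the paper states this corollary without a separate proof, precisely because it is immediate from the preceding lemma in the way you describe --- cases (1) and (4) give a fixed pair the type set $\{e_1,e_4,d_2\}$, cases (2) and (3) give $\{e_2,e_3,d_1\}$, these are disjoint, and hence no pair can be of type $e_1$ in one square and $e_2$ in another. The translation-invariance of $\preceq$ and the disjointness observation are both correct. One slip in your final bookkeeping paragraph: your list of four octants is not a correct cover. The sets $\{0<-y\le x\}$ and $\{0\le -y<x\}$ describe essentially the same octant (between the positive $x$-axis and the line $y=-x$, with different boundary conventions), while the octant $\{0<x\le -y\}$ --- which is the one corresponding to case (4), i.e.\ $b_2=(v,-u)$ with $0<v\le u$, so $x=v$ and $-y=u$ --- is missing; a point such as $(1,-2)$ lies in none of your four listed regions. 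The coverage claim itself is true once this is corrected: the four regions $\{0\le x<y\}$, $\{0\le -y<x\}$, $\{0<y\le x\}$, $\{0<x\le -y\}$ do exhaust $\{Q: P\prec Q\}$ after translating $P$ to the origin, so the argument goes through.
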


\begin{proposition}
  \label{proposition_general_upper_bound}
  For $n\ge 1$ we have $S_{\square}(n)\le\frac{n^2-1}{8}$.
\end{proposition}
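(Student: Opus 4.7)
My plan is to leverage the preceding Corollary. By the preceding Lemma, the $\binom{n}{2}$ pairs of points in $\cP$ split into two disjoint categories, $A$ and $B$, according to the angle of the displacement vector from the lexicographically smaller to the larger endpoint: $A$-pairs (angle in $(-90^\circ,-45^\circ]\cup(45^\circ,90^\circ]$) can serve only as $e_1$, $e_4$, or $d_2$ of a square, while $B$-pairs can serve only as $e_2$, $e_3$, or $d_1$. Since an edge together with its lexicographic orientation uniquely determines its square via the rotation formulas in the Lemma, the set $E_i$ of pairs realized as the role $e_i$ satisfies $|E_i|=S_\square(n)$ for every $i\in\{1,2,3,4\}$.

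First, a direct application of the Corollary gives $E_1\cap E_2=\varnothing$, so that $2S_\square(n)\le|E_1|+|E_2|\le|A|+|B|=\binom{n}{2}$, yielding the weaker bound $S_\square(n)\le n(n-1)/4$. To improve this to $(n^2-1)/8$ -- an improvement of roughly a factor of two -- I would enlist the remaining two edge sets. Since $E_1,E_4\subseteq A$ and $E_2,E_3\subseteq B$, inclusion-exclusion yields
\[
4S_\square(n) - |E_1\cap E_4|-|E_2\cap E_3| \;=\; |E_1\cup E_4|+|E_2\cup E_3| \;\le\; |A|+|B| \;=\; \binom{n}{2}.
\]
Reducing to the target bound is therefore equivalent to showing that the combined overlap $|E_1\cap E_4|+|E_2\cap E_3|$ is at most $(n-1)/2$.

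This overlap estimate is the main obstacle. A pair in $E_1\cap E_4$ corresponds to two squares sharing a common edge in opposing roles (the prototype being the six-point configuration $\cP_{6,2}^1$ of Figure~\ref{fig_6_configurations}), so there is considerable geometric rigidity. My plan is a charging argument: attach each such shared-edge configuration injectively to a distinguished vertex of $\cP$, for instance to an extremal vertex of the doubled-square configuration along the direction perpendicular to the shared edge. The subtle point is that several shared-edge configurations can in principle compete for the same distinguished vertex (as in a grid, where many squares stack along a common axis), so the charging scheme must be designed with care -- perhaps by using a sign or orientation to distinguish the two ``sides'' to which a vertex may be charged. Once the injectivity of the charging is established, the overlap bound follows and the inequality $4S_\square(n)\le\binom{n}{2}+(n-1)/2=(n^2-1)/2$ gives the claim.
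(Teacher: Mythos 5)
Your first step is fine: each pair of points occurs in exactly three squares, once in each of the roles $e_1,e_4,d_2$ or once in each of $e_2,e_3,d_1$, so $|E_1|=|E_2|=S_{\square}(\cP)$ and the Corollary gives $E_1\cap E_2=\emptyset$, hence $S_{\square}(n)\le\tfrac12\binom{n}{2}$. The problem is the second half. The overlap estimate you need, $|E_1\cap E_4|+|E_2\cap E_3|\le\tfrac{n-1}{2}$, is simply false, so no charging scheme can establish it. Already in the $4\times4$ grid ($n=16$) every interior vertical unit pair such as $\{(1,0),(1,1)\}$ is the $e_4$ of the unit square on its left and the $e_1$ of the unit square on its right, giving $6$ pairs in $E_1\cap E_4$ and, symmetrically, $6$ in $E_2\cap E_3$, for a total of at least $12>\tfrac{15}{2}$; in the $m\times m$ grid the count of such shared edges grows like $2m^2\sim 2n$, four times the bound you require. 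More structurally, the only a priori bound on the overlaps is $|E_1\cap E_4|\le S_{\square}(\cP)$ and likewise for $|E_2\cap E_3|$, so your inclusion--exclusion inequality $4S\le\binom{n}{2}+|E_1\cap E_4|+|E_2\cap E_3|$ can never yield anything better than $S\le\tfrac12\binom{n}{2}$: in dense configurations (which are exactly the ones relevant to an upper bound) almost every $e_1$-pair is also an $e_4$-pair, so the sets $E_1$ and $E_4$ essentially coincide and the union buys you nothing.

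The factor-of-two improvement in the paper comes from a different mechanism: a halving of the point set rather than a second pair of edge roles. Translate so that the $y$-axis splits $\cP$ into two parts of size at most $\tfrac{n+1}{2}$, and reflect if necessary so that at least half of the squares have at least two vertices in the left part. A square with exactly two vertices on the left has its $e_1$-pair entirely in the left part; a square with three or four vertices on the left contributes both its $e_1$-pair and its $e_2$-pair there. By the three-roles observation and the Corollary these charged pairs are all distinct, and there are only $\binom{(n+1)/2}{2}=\tfrac{n^2-1}{8}$ pairs available in the left part. The quadratic saving from counting pairs inside one half, combined with the double charge for squares lying mostly in that half, is what produces the $\tfrac{n^2-1}{8}$ bound; this ingredient has no counterpart in your argument.
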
  
\begin{proof}
  Let $\cP\subset\R^2$ be an arbitrary $n$-point set. Using a suitable similarity transformation we can assume that at most $\tfrac{n+1}{2}$ points have 
  negative and $\tfrac{n+1}{2}$ points have positive $x$-coordinates, while the $y$-axis is free of points. By $a_{ij}$ we denote the number of squares such 
  that exactly $i$ of its vertices have negative coordinates, where $0\le i,j\le 4$ with $i+j=4$. If necessary by reflecting in the $y$-axis we can assume 
  $a_{31}+a_{40}\ge a_{13}+a_{04}$. Let $s$ be an arbitrary square of $\cP$ with vertices $v_1\prec v_2\prec v_3\prec v_4$ such that $v_1$ and $v_2$ have 
  negative $x$-coordinates. Counting pairs of points with negative $x$-coordinate that are of type $e_1$ or type $e_2$ gives  
  $$
    a_{22}+2a_{31}+2a_{40} \le {\frac{n+1}{2} \choose 2}=\frac{n^2-1}{8},
  $$
  so that
  $$
    S_{\square}(\cP)=a_{22}+a_{31}+a_{40}+a_{13}+a_{04}\le a_{22}+2a_{31}+2a_{40} \le\frac{n^2-1}{8}.  
  $$
\end{proof}
The underlying idea of the proof of Proposition~\ref{proposition_general_upper_bound} fits into the framework of a general method based on ordering relations  
as introduced in \cite{abrego2016number,kolesnikov2015number}.   

Next we want to determine the exact values of $S_{\square}(n)$ for small values of $n$. First we introduce more notation. For brevity, an $n$-point set $\cP\in\R^2$ with 
$S_{\square}(\cP)=m$ is also called \emph{$(n,m)$-configuration} in this section. We also use intuitive notations like $(n,\ge m)$-configuration. Any $(n,m)$-configuration where a pair of 
points is contained in at least two different squares contains one of the $(6,2)$-configurations $\cP_{6,2}^1$ or $\cP_{6,2}^2$ as subconfiguration, see Figure~\ref{fig_6_configurations}.  
For arbitrary point sets $\cF_1,\dots,\cF_l$ we denote by $S_{\square}(n;\cF_1,\dots,\cF_l)$ the maximum value of $S_{\square}(\cP)$ where $\cP$ is an $n$-point set such that 
no subset of its points is similar to $\cF_i$ for an index $1\le i\le l$. Besides trivial cases like $S_{\square}\!\!\left(n;\cP_{4,1}^1\right)=0$ and 
$S_{\square}\!\!\left(n;\cP_{6,2}^\star,\cP_{7,2}^1\right)=\left\lfloor n/4\right\rfloor$, the exact value of $S_{\square}(n;\cF_1,\dots,\cF_l)$ is hard to determine if $n$ is 
not too small. Nevertheless, this notation helps to better structure the subsequent arguments. 

Let $A(n,d,w)$ denote the maximum size of a binary code with word length $n$, minimum Hamming distance $d$, and constant weight $w$, see e.g.\ 
\cite{brouwer2006new} for details and bounds. A binary code of size $m$, length $n$, minimum Hamming distance $6$, and constant weight $4$ is in 
one-to-one correspondence to a set $S$ of $m$ four-subsets of $\{1,\dots,n\}$ such that $\# (a\cap b)\le 1$ for all $a,b\in S$ with $a\neq b$. 
Thus, we have 
$$
  S_{\square}\!\!\left(n;\cP_{6,2}^\star\right):=S_{\square}\!\!\left(n;\cP_{6,2}^1,\cP_{6,2}^2\right)\le A(n,6,4)
  \le \left\lfloor n\cdot \left\lfloor \frac{n-1}{3}\right\rfloor /4\right\rfloor=\frac{n^2}{12}+O(n).   
$$
The values $A(n,6,4)$ are known exactly, see \cite[Theorem 6]{brouwer2006new}, and we list the first few in Table~\ref{table_a_n_6_4}, cf.\ sequence A004037.
\begin{table}[htp!]
  \begin{center}
    \begin{tabular}{llllllllllllllllllll}
    \hline 
    $n$        & 1 & 2 & 3 & 4 & 5 & 6 & 7 & 8 & 9 & 10 & 11 & 12 & 13 & 14 & 15 & 16 & 17 \\ 
    $A(n,6,4)$ & 0 & 0 & 0 & 1 & 1 & 1 & 2 & 2 & 3 & 5  & 6  & 9  & 13 & 14 & 15 & 20 & 20 \\
    \hline 
    \end{tabular}
    \caption{The exact values of $A(n,6,4)$ for $n\le 17$.}
    \label{table_a_n_6_4}
  \end{center} 
\end{table}

Our next goal is to use the results obtained by exhaustive recursive $2$-extension starting from a unit square, see Table~\ref{table_iso_types_exhaustive_2_extension}.
\begin{definition}
  Let $\cP\subset \R^2$ be an arbitrary point set. A point set $\cP'\subseteq \cP$ is called a \emph{$2$-extension subconfiguration} if $\cP'$ can be obtained by recursive 
  $2$-extension starting from a unit square. If there is no proper superset of $\cP'$ that can also be obtained by recursive $2$-extension starting from $\cP'$, 
  then we call $\cP'$ $2$-extension maximal.
\end{definition}
For an arbitrary point set $\cP\subset\R^2$ we define $S_{\rit-3\square}(\cP):=S_{\rit}(\cP)-3S_{\square}(\cP)$ and denote the corresponding maximum 
value of an $n$-point set by $S_{\rit-3\square}(n)$.
\begin{lemma}
  \label{lemma_2_extension_maximal_mixed_triangle_square}
  Let $\cP\subset\R^2$ be an arbitrary $n$-point set and $\cP'$ be a $2$-extension maximal subconfiguration. Then we have 
  $$
    S_{\square}(\cP) \le S_{\square}(\cP') + S_{\rit-3\square}(\cP\backslash\cP')\le S_{\square}(n')+S_{\rit-3\square}(n-n'), 
  $$
  where $n':=\#\cP'$.
\end{lemma}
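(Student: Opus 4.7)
The plan is to exploit the $2$-extension maximality of $\cP'$ to severely restrict how a square of $\cP$ can meet $\cP'$, and then convert the remaining squares into isosceles right triangles inside $\cP\setminus\cP'$. The second inequality in the statement is immediate from the definitions of $S_{\square}(n')$ and $S_{\rit-3\square}(n-n')$ as maxima over point sets of the prescribed cardinality, so all the work lies in the first inequality.

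First I would show that every square $t$ of $\cP$ satisfies $k:=\#(t\cap\cP')\in\{0,1,4\}$. Suppose $k=2$, with $\{p_1,p_2\}=t\cap\cP'$ and $\{p_3,p_4\}=t\setminus\cP'\subseteq\cP$; then a single $2$-extension step applied to $\cP'$ via the pair $(p_1,p_2)$ and the square $t$ produces the proper superset $\cP'\cup\{p_3,p_4\}\subseteq\cP$, contradicting maximality. If $k=3$, picking any two vertices of $t$ in $\cP'$ and invoking the degenerate $2$-extension (in which one of the two candidate points already belongs to $\cP'$) adds the missing fourth vertex of $t$, again contradicting maximality.

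Next I would partition the squares of $\cP$ according to this intersection type: write $a$, $b$, $c$ for the numbers of squares with $k=4$, $k=0$, $k=1$, respectively. Then $S_{\square}(\cP)=a+b+c=S_{\square}(\cP')+b+c$ and $S_{\square}(\cP\setminus\cP')=b$. Since any three vertices of a square form an isosceles right triangle, a type-$b$ square contributes ${4\choose 3}=4$ such triangles in $\cP\setminus\cP'$ while a type-$c$ square contributes the single triangle on its three vertices lying in $\cP\setminus\cP'$. By Lemma~\ref{lemma_two_points_determine_coordinates} (equivalently: three vertices of a square determine the fourth), all of these triangles are pairwise distinct, so $S_{\rit}(\cP\setminus\cP')\ge 4b+c$.

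Combining the two counts gives $b+c\le S_{\rit}(\cP\setminus\cP')-3b=S_{\rit-3\square}(\cP\setminus\cP')$, which is the desired bound. The main obstacle I expect is the case analysis in the very first step, since the notion of $2$-extension maximality is the load-bearing hypothesis of the whole argument; one must trace carefully through how the degenerate single-point extensions have been absorbed into the definition, so that the intersection value $k=3$ is genuinely ruled out alongside $k=2$.
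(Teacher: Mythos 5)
Your proposal is correct and follows essentially the same route as the paper: the paper likewise observes that $2$-extension maximality forces every square not contained in $\cP'$ to have $3$ or $4$ vertices in $\cP\backslash\cP'$, converts these into isosceles right triangles that are pairwise distinct because each such triangle determines a unique square, and the $-3S_{\square}$ correction absorbs the overcount of four triangles per square lying entirely in $\cP\backslash\cP'$. Your write-up merely makes explicit the case analysis ($k=2$ via a genuine $2$-extension step, $k=3$ via a degenerate one) and the $a,b,c$ bookkeeping that the paper leaves implicit.
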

\begin{proof}
  Let $s$ be an arbitrary square in $\cP$ that is not contained in $\cP'$. Since $\cP'$ is $2$-extension maximal either $3$ or $4$ vertices of $s$ have to be contained 
  in $\cP\backslash\cP'$. In the first case the $3$ vertices form a right isosceles triangle. In the second case the $4$ vertices form a square that 
  contains four right isosceles triangles. Note that each right isosceles triangle uniquely determines a square. 
\end{proof}

\begin{lemma}
  \label{lemma_ub_mixed_triangle_square}
  We have $S_{\rit-3\square}(n)=0$ for $n\le 2$, $S_{\rit-3\square}(3)=1$, $S_{\rit-3\square}(4)=3$, $S_{\rit-3\square}(5)=5$, $S_{\rit-3\square}(6)=7$, $S_{\rit-3\square}(7)=10$, 
  $S_{\rit-3\square}(8)=14$, and $S_{\rit-3\square}(9)=18$.
\end{lemma}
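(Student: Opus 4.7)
The plan hinges on the identity
\[
S_{\rit}(\cP) \;=\; 4\,S_{\square}(\cP) \;+\; T(\cP),
\]
where $T(\cP)$ denotes the number of isosceles right triangles of $\cP$ whose unique square-completing fourth vertex is missing from $\cP$. Since every IRT in $\cP$ either belongs to a full square of $\cP$ (and is then one of its four vertex triples) or is such an \emph{open} IRT, this identity is tautological and yields
\[
S_{\rit-3\square}(\cP) \;=\; S_{\square}(\cP) + T(\cP),
\]
a sum of two nonnegative terms. Vertices lying in no IRT contribute to neither summand, and since the claimed bounds $B(n)=1,3,5,7,10,14,18$ are non-decreasing in $n$, a short induction reduces every case to $\cP$ being reduced.

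For the lower bounds I would exhibit a single $n$-point configuration achieving $B(n)$ for each $n\in\{3,\dots,9\}$; all such configurations are reduced with $S_{\rit}(\cP)$ well above the $1$-extension-obtainability thresholds of Proposition~\ref{proposition_1_extension_obtainable} and Proposition~\ref{proposition_1_extension_obtainable_2}, and hence can be selected directly from Table~\ref{table_iso_types_exhaustive_1_extension_triangles}. For instance, for $n=5$ the unit square together with its centre yields $S_{\square}=1$ and $T=4$ (one open IRT per edge of the square), giving $S_{\rit-3\square}=5$.

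For the upper bounds I would argue uniformly by contradiction on the reduced set $\cP$. Suppose $S_{\rit-3\square}(\cP)\ge B(n)+1$; then $S_{\rit}(\cP)\ge B(n)+1$ as well. For $n\in\{6,7,8,9\}$ we have $B(n)+1\in\{8,11,15,19\}$, each meeting or exceeding the threshold in Proposition~\ref{proposition_1_extension_obtainable}(4),(5) or Proposition~\ref{proposition_1_extension_obtainable_2}(1),(2), so $\cP$ is obtainable by $1$-extension and therefore represented in Table~\ref{table_iso_types_exhaustive_1_extension_triangles}. One then loops over the finitely many similarity classes with the prescribed $\#\cP$ and $S_{\rit}(\cP)\ge B(n)+1$, computes $S_{\square}(\cP)$ for each, and verifies $S_{\rit-3\square}(\cP)\le B(n)$. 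The residual cases $n\le 5$ are handled ad hoc: for $n=4$, since the unique extremal class at $n=4,\ m=4$ in Table~\ref{table_iso_types_exhaustive_1_extension_triangles} is the square, any $\cP$ with $S_{\rit-3\square}(\cP)\ge 4$ would either force $S_{\rit}(\cP)=4$ (making $\cP$ a square, giving $S_{\rit-3\square}=1$, contradiction) or $S_{\rit}(\cP)\ge 7$, contradicting $S_{\rit}(4)=4$.

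The main obstacle is the size of the finite verification for $n=8,9$, where Table~\ref{table_iso_types_exhaustive_1_extension_triangles} lists several hundred similarity classes above the relevant threshold. A clean alternative, avoiding case-by-case inspection, is to adapt the ILP of Lemma~\ref{lemma_ilp_triangle} by introducing binary variables $z_T$ for each $T\subseteq N$ with $\#T=4$, together with the constraint
\[
z_T \;\ge\; \Bigl(\sum_{S\subset T,\,\#S=3} x_S\Bigr)-3,
\]
which forces $z_T=1$ whenever all four triples in $T$ are IRTs; since $S_{\rit}(4)=4$ is uniquely attained by a square, this in turn certifies that $T$ spans a square. Maximising $\sum_S x_S - 3\sum_T z_T$ then gives the desired upper bound on $S_{\rit-3\square}(n)$ directly.
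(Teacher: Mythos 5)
Your proposal is correct and follows essentially the same route as the paper: lower bounds from explicit extremal configurations (those listed in Appendix~\ref{appendix_dissimilar_blbk_point_sets_mixed}), and upper bounds by observing that any reduced counterexample would satisfy $S_{\rit}(\cP)\ge B(n)+1$, which meets the thresholds of Proposition~\ref{proposition_1_extension_obtainable} and Proposition~\ref{proposition_1_extension_obtainable_2}, so the point set appears in the exhaustive $1$-extension enumeration of Table~\ref{table_iso_types_exhaustive_1_extension_triangles} and a finite computation closes the argument. The identity $S_{\rit-3\square}(\cP)=S_{\square}(\cP)+T(\cP)$ and the modified ILP are sound but inessential additions, the tightness of the latter being itself an unverified computation.
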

\begin{proof}
  The statements are obvious for $n\le 3$. In Appendix~\ref{appendix_dissimilar_blbk_point_sets_mixed} we present point sets attaining the mentioned values for $S_{\rit-3\square}(\cP)$. 
  Looping over all $n$-point sets with $n\le 9$ that can be obtained by $1$-extension does not yield point sets with larger values, so that the result is implied by  
  Proposition~\ref{proposition_1_extension_obtainable} and Proposition\ref{proposition_1_extension_obtainable_2}. Let $\cP'$ be a $2$-extension maximal subconfiguration of $\cP$ 
  with maximum possible cardinality. Note that we have $\#\cP'\ge 6$.
\end{proof}

\begin{theorem}
  \label{thm_exact_values_squares_1}
  We have $S_{\square}(n)=0$ for $n\le 3$, $S_{\square}(4)=S_{\square}(5)=1$, $S_{\square}(6)=2$, $S_{\square}(7)=3$, $S_{\square}(8)=4$, $S_{\square}(9)=6$, 
  $S_{\square}(10)=7$, $S_{\square}(11)=8$, $S_{\square}(12)=11$, and $S_{\square}(13)=13$. Moreover, for $n\le 12$ the extremal examples can be obtained by 
  recursive $2$-extension starting from the unit square and are listed in Appendix~\ref{appendix_dissimilar_blbk_point_sets_squares}.
\end{theorem}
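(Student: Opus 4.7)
The plan is to split the statement into lower and upper bounds, handled separately. Lower bounds come for free from the exhaustive recursive $2$-extension enumeration summarized in Table~\ref{table_iso_types_exhaustive_2_extension}: for each $n\in\{4,\ldots,13\}$ the maximal value of $m$ appearing in the $n$-column exhibits an explicit configuration realizing the claimed $S_\square(n)$, and these configurations are reproduced in Appendix~\ref{appendix_dissimilar_blbk_point_sets_squares}. For the upper bounds I would induct on $n$, using Lemma~\ref{lemma_2_extension_maximal_mixed_triangle_square} together with the tight $S_{\rit-3\square}$-values of Lemma~\ref{lemma_ub_mixed_triangle_square}.

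Given an extremal $n$-point set $\cP$, let $\cP'\subseteq\cP$ be a $2$-extension maximal subconfiguration of maximum cardinality $n'$. Because from a unit square the first non-degenerate $2$-extension already introduces two new points, the admissible values are $n'\in\{4,6,7,\ldots,n\}$. If $n'=n$, then $\cP$ itself is produced by recursive $2$-extension and the bound is read off directly from Table~\ref{table_iso_types_exhaustive_2_extension}, matching the lower bound. Otherwise Lemma~\ref{lemma_2_extension_maximal_mixed_triangle_square}, combined with the inductive hypothesis on $S_\square(n')$ and Lemma~\ref{lemma_ub_mixed_triangle_square}, yields
\[
  S_\square(\cP)\le S_\square(n')+S_{\rit-3\square}(n-n').
\]
Tabulating this estimate over all admissible $(n,n')$ with $6\le n\le 13$ and $n'<n$ shows that the right-hand side is strictly below $S_\square(n)$ whenever $n'\ge 6$. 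The only remaining obstruction in each case is the single degenerate value $n'=4$, where the estimate reads $1+S_{\rit-3\square}(n-4)$ and is either equal to or exceeds the target (e.g.\ $1+7=8>7$ for $n=10$ and $1+18=19>13$ for $n=13$).

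The hard part is therefore the refinement of the $n'=4$ case, which must simultaneously deliver the numerical upper bound and, for $n\le 12$, force $n'=n$ at every extremal configuration. Here $\cP$ decomposes as a single unit square $\cP'$ together with $Q:=\cP\setminus \cP'$ of cardinality $n-4\in\{2,\ldots,9\}$, and the $2$-extension maximality of $\cP'$ translates into the strong incidence constraint that no square of $\cP$ has exactly two vertices in $\cP'$. Consequently, every square of $\cP$ outside $\cP'$ has either three or four of its vertices in $Q$, and in the three-vertex case the missing fourth vertex must sit at one of only four prescribed positions, namely the corners of $\cP'$. I would enumerate the finitely many $S_{\rit-3\square}$-extremal point sets from Appendix~\ref{appendix_dissimilar_blbk_point_sets_mixed} and, for each such $Q$, verify geometrically -- either by the realizability machinery of Section~\ref{sec_rec_extension} applied to the attached oriented square set, or by direct inspection once a grid embedding is fixed via Corollary~\ref{corollary_grid_assumption} -- which of the RITs counted by $S_{\rit-3\square}(Q)$ can simultaneously be closed into squares sharing a common four-point target while respecting the no-two-vertices-in-$\cP'$ constraint. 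A finite case analysis of this kind should sharpen the inductive bound to exactly the claimed $S_\square(n)$ in every critical case, and for $n\le 12$ strictly below it, forcing $n'=n$ and completing both the equality and the extremality assertion.
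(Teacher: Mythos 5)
Your overall skeleton (lower bounds from the explicit constructions, upper bounds by applying Lemma~\ref{lemma_2_extension_maximal_mixed_triangle_square} together with Lemma~\ref{lemma_ub_mixed_triangle_square} to a $2$-extension maximal subconfiguration $\cP'$ of maximum cardinality $n'$) matches the paper's, and your tabulation showing that every case with $n'\ge 6$ closes with strict inequality is correct. The gap lies in how you dispose of the case $n'=4$, which you rightly identify as the only obstruction. The paper eliminates this case with a single observation that your argument is missing: if the largest $2$-extension subconfiguration of $\cP$ has only $4$ points, then no pair of points of $\cP$ can lie in two different squares (two such squares would span a copy of $\cP_{6,2}^1$ or $\cP_{6,2}^2$, which is a $2$-extension subconfiguration of size $6$), hence $S_{\square}(\cP)\le S_{\square}\!\left(n;\cP_{6,2}^\star\right)\le A(n,6,4)$, and Table~\ref{table_a_n_6_4} shows that this is strictly below the claimed value for $7\le n\le 12$ and equal to $13$ for $n=13$. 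This one step simultaneously delivers the numerical upper bound and, for $n\le 12$, forces $n'=n$, which is exactly what the final assertion of the theorem requires.

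Your proposed substitute for the $n'=4$ case, namely enumerating the $S_{\rit-3\square}$-extremal point sets of Appendix~\ref{appendix_dissimilar_blbk_point_sets_mixed} and checking geometrically which isosceles right triangles can be closed into squares through the four corners of $\cP'$, is not a proof as it stands. First, it is only sketched, not executed. Second, its scope is too narrow: to improve the estimate $S_{\square}(\cP)\le 1+S_{\rit-3\square}(n-4)$ below the target you would have to control \emph{every} $(n-4)$-point set $Q$ with $S_{\rit-3\square}(Q)$ at or above the relevant threshold, not merely the extremal configurations reproduced in the appendix; no such enumeration exists in the paper, and producing one is a substantially larger computation than anything else in the argument. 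Note also that for $n=8$ and $n=9$ the $n'=4$ estimate already meets the target with equality, so the extremality claim for those values of $n$ would likewise hinge on this unexecuted analysis. The whole difficulty disappears once the constant-weight-code bound is invoked.
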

\begin{proof}
  The statements are obvious for $n\le 4$. As shown by the examples in Appendix~\ref{appendix_dissimilar_blbk_point_sets_squares}, the mentioned values can indeed be attained. 
  If there are at least two squares and $S_{\square}(\cP)=S_{\square}(\# \cP)$, then the values in Table~\ref{table_a_n_6_4} imply $\#\cP=13$, so that we can assume the 
  existence of a pair of points that is contained in at least two different squares. By recursively applying Lemma~\ref{lemma_2_extension_maximal_mixed_triangle_square} 
  and Lemma~\ref{lemma_ub_mixed_triangle_square} we conclude $\#\cP'=\#\cP$. 
\end{proof}

In order to show that all $13$-point sets with $13$ squares can be obtained by recursive $2$-extension starting from the unit square it remains to show 
$S_{\square}\!\left(13,\cP_{6,2}^\star\right)<13$. Starting from the upper bound $S_{\square}\!\left(n,\cP_{6,2}^\star\right)\le A(n,6,4)$, we remark that 
$S_{\square}\!\left(n,\cP_{6,2}^\star\right)$ is the 
largest size of a binary code with word length $n$, minimum Hamming distance $6$, and constant weight $4$ that can be represented by $n$ (pairwise different) points 
in the Euclidean plane $\R^2$ such that the codewords are given by the squares spanned by the point set. In other words, we consider the maximum number 
of squares spanned by an $n$-point set such that no pair of points is contained in two different squares. 

Trivially we have  $S_{\square}\!\left(n,\cP_{6,2}^\star\right)=0$ for $n\le 3$ and $S_{\square}\!\left(n,\cP_{6,2}^\star\right)=1$ for $4\le n\le 6$. It is also easy to find 
examples showing $S_{\square}\!\left(7,\cP_{6,2}^\star\right)=S_{\square}\!\left(8,\cP_{6,2}^\star\right)=2$ and $S_{\square}\!\left(9,\cP_{6,2}^\star\right)=3$, so that the upper 
bound $S_{\square}\!\left(n,\cP_{6,2}^\star\right)= A(n,6,4)$ is attained. Examples showing showing $S_{\square}\!\left(10,\cP_{6,2}^\star\right)\ge 4$, $S_{\square}\!\left(11,\cP_{6,2}^\star\right)\ge 5$, 
$S_{\square}\!\left(12,\cP_{6,2}^\star\right)\ge 6$, and $S_{\square}\!\left(13,\cP_{6,2}^\star\right)\ge 7$ are given in Appendix~\ref{appendix_hamming}. 

\begin{example}
  Consider the (realizable) oriented square set 
  $$
    S=\big\{(1,2,3,4),(4,5,6,7),(3,8,9,7),(2,10,9,5)\big\}
  $$
  of order $10$ and cardinality $4$. It can be obtained by $1$-extension from the (realizable) oriented square set $\big\{(1,2,3,4),(4,5,6,7),(3,8,9,7)\big\}$ of order $9$ 
  and cardinality $3$. The, up to similarity, unique representation of $S$ is depicted in Figure~\ref{fig_four_imply_five}. Thus, $(3,9,4,6)$ also has to be a square, so 
  that $(3,8,9,7)$ and $(3,9,4,6)$ share two common vertices. 
\end{example}
The approach to check the realizability of $S$ via the linear equation system $L(n,S)$ with the additional inequalities $z_j\neq z_k$ can be easily extended to also 
explicitly forbid squares not contained in $S$. Transforming the formulation over $\C$ to a formulation over $\R$ we stay within the setting of Lemma~\ref{lemma_neq_zero_constraints}, 
so that we can check the existence of solutions algorithmically.

\begin{figure}[htp!]
\begin{center}
\begin{tikzpicture}[scale=0.9]
  \draw[thick](0.04,1)--(1,1)--(1,1.96)--(0.04,1.96)--(0.04,1);
  \draw[thick](1,2.04)--(2,2.04)--(2,3)--(1,3)--(1,2.04);
  \draw[thick,blue](3,2)--(2,0)--(0,1)--(1,3)--(3,2);
  \draw[thick,blue](1,3)--(2,2)--(1,1)--(0,2)--(1,3);
  \draw[thick,green](0,0)--(0,2)--(2,2)--(2,0)--(0,0);
  \fill[black] (0,0)circle(2.5pt);
  \fill[black] (2,0)circle(2.5pt);
  \fill[black] (0,1)circle(2.5pt);
  \fill[black] (0,2)circle(2.5pt);
  \fill[black] (1,1)circle(2.5pt);
  \fill[black] (1,2)circle(2.5pt);
  \fill[black] (1,3)circle(2.5pt);
  \fill[black] (2,3)circle(2.5pt);
  \fill[black] (2,2)circle(2.5pt);
  \fill[black] (3,2)circle(2.5pt);   
  \draw (0,0) node[black,anchor=north east]{$1$};
  \draw (2,0) node[black,anchor=north west]{$2$};
  \draw (0,1) node[black,anchor=north east]{$5$};
  \draw (0,2) node[black,anchor=north east]{$4$};
  \draw (1,1) node[black,anchor=north west]{$6$};
  \draw (1,2) node[black,anchor=north west]{$7$};
  \draw (1,3) node[black,anchor=south east]{$9$};
  \draw (2,2) node[black,anchor=north west]{$3$};
  \draw (2,3) node[black,anchor=south west]{$8$};
  \draw (3,2) node[black,anchor=south west]{$10$};
\end{tikzpicture}
\caption{Four squares implying a fifth square.}
\label{fig_four_imply_five}
\end{center}
\end{figure}
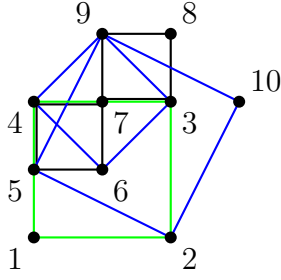

An easy averaging argument implies the existence of subconfigurations with relatively many squares:
\begin{lemma}
  \label{lemma_average_configuration} 
  Let $\cP$ be an $(n,\ge m)$-configuration. For each $1\le n'<n$ there exists an $(n',\ge l)$-subconfiguration, where $l=\left\lceil m\cdot {{n-4}\choose{n'-4}}/{n\choose{n'}}\right\rceil$.
\end{lemma}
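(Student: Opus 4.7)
The plan is to use a straightforward double counting / averaging argument, exactly analogous to Lemma~\ref{lemma_avg_t}. First I would count, in two ways, the number of incidences between squares contained in $\cP$ and $n'$-element subsets of $\cP$. The point set $\cP$ has $\binom{n}{n'}$ subsets of cardinality $n'$. On the other hand, each square in $\cP$ has exactly $4$ vertices, so it is contained in precisely $\binom{n-4}{n'-4}$ of these $n'$-subsets (we must choose the remaining $n'-4$ vertices from the $n-4$ points outside the square).

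Next I would let $s(\cP')$ denote $S_{\square}(\cP')$ for each $n'$-subset $\cP'\subseteq\cP$. The above double counting gives
\[
\sum_{\cP'\subseteq\cP,\,\#\cP'=n'} s(\cP') \;\ge\; m\cdot \binom{n-4}{n'-4},
\]
since $\cP$ contains at least $m$ squares and each contributes $\binom{n-4}{n'-4}$ to the left-hand sum. By the pigeonhole principle there exists an $n'$-subset $\cP'\subseteq\cP$ with
\[
S_{\square}(\cP') \;\ge\; \frac{m\cdot \binom{n-4}{n'-4}}{\binom{n}{n'}},
\]
and since $S_{\square}(\cP')$ is an integer we may round up to obtain the desired bound $l=\bigl\lceil m\cdot \binom{n-4}{n'-4}/\binom{n}{n'}\bigr\rceil$.

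There is essentially no obstacle here: the only things to check are that every square of $\cP$ with all four vertices in a chosen $n'$-subset is counted once, which is immediate, and that $n'\ge 4$ is handled correctly (for $n'<4$ the binomial coefficient $\binom{n-4}{n'-4}$ is zero, and the bound $l=0$ holds trivially). The proof is a direct two-paragraph averaging argument with no case analysis.
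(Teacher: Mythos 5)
Your proof is correct and follows essentially the same averaging/double-counting argument as the paper, which simply computes the average number of squares over all $n'$-subsets as $m\cdot\binom{n-4}{n'-4}/\binom{n}{n'}$ and takes the ceiling. Your version is if anything slightly more careful, spelling out the pigeonhole step and the degenerate case $n'<4$.
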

\begin{proof}
  The average number $x$ of squares contained in an $n'$-subset of $\cP$ satisfies ${n\choose{n'}}\cdot x/{{n-4}\choose{n'-4}}=m$, 
  so that $x=m\cdot {{n-4}\choose{n'-4}}/{n\choose{n'}}$.
\end{proof}
For a given $n$-point set $\cP$ let the degree of every vertex be the number of squares in which it is contained, $\delta_{\min}$ be the minimum, and $\delta_{\max}$ 
be the maximum degree.  
\begin{lemma}
  \label{lemma_minimum_degree} 
  Let $\cP$ be an $(n,m)$-configuration and $\delta_{\min}$ be its minimum degree. Then we have $\delta_{\min}\le \left\lfloor 4m/n\right\rfloor$, 
  $\delta_{\max}\ge \left\lceil 4m/n\right\rceil$, and there exists an $(n-1,m-\delta_{\min})$-subconfiguration.  
\end{lemma}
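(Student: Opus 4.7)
The proof is a double-counting argument followed by a one-vertex deletion. The plan is to compute the sum of the degrees in two ways, then remove a minimum-degree vertex to get the advertised subconfiguration.

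First I would observe that the incidence between vertices and squares is counted exactly $4m$ times: each of the $m$ squares contributes $1$ to the degree of each of its four (distinct) vertices, so
\[
  \sum_{v \in \cP} \deg(v) \;=\; 4m.
\]
Dividing by $n$, the average degree is $4m/n$. Since $\delta_{\min}$ is a non-negative integer not exceeding this average, I get $\delta_{\min} \le \lfloor 4m/n \rfloor$, and symmetrically $\delta_{\max} \ge \lceil 4m/n \rceil$.

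For the third assertion, let $v \in \cP$ be a vertex attaining the minimum degree $\delta_{\min}$, and set $\cP' := \cP \setminus \{v\}$. Every square of $\cP$ either contains $v$ as a vertex (there are exactly $\delta_{\min}$ such squares, by definition of the degree) or has all four vertices in $\cP'$ and is therefore a square of $\cP'$. Since no square of $\cP'$ can use $v$, these two classes partition the squares of $\cP$, giving
\[
  S_{\square}(\cP') \;=\; S_{\square}(\cP) - \delta_{\min} \;=\; m - \delta_{\min}.
\]
Thus $\cP'$ is an $(n-1, m-\delta_{\min})$-configuration, as claimed.

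There is essentially no obstacle here; the only subtle point is that when deleting $v$ we need the four vertices of each square to be distinct, but this is already built into the definition of a square in $\R^2$, so the partition into squares through $v$ and squares disjoint from $v$ is exact. The lemma then follows from these two elementary observations without any further ingredient.
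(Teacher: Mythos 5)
Your proof is correct and follows the same route as the paper: a double count of vertex--square incidences giving average degree $4m/n$, followed by deletion of a minimum-degree vertex to produce the $(n-1,m-\delta_{\min})$-subconfiguration. Your write-up merely spells out the details that the paper's two-sentence proof leaves implicit.
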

\begin{proof}
  For the average degree $\delta$ we have $n\delta=4m$, so that $\delta=4m/n$. Removing a vertex with degree $\delta_{\min}$ from $\cP$ gives the 
  desired subconfiguration.
\end{proof}

Via exhaustive enumeration of oriented square sets we have verified $S_{\square}\!\left(10,\cP_{6,2}^\star\right)=4$, $S_{\square}\!\left(11,\cP_{6,2}^\star\right)=5$, and 
$S_{\square}\!\left(12,\cP_{6,2}^\star\right)=6$, so that Lemma~\ref{lemma_average_configuration} implies $S_{\square}\!\left(13,\cP_{6,2}^\star\right)\le 9$ and 
$S_{\square}\!\left(14,\cP_{6,2}^\star\right)\le 13$. 
\begin{conjecture}
  $$S_{\square}\!\left(13,\cP_{6,2}^\star\right)=7$$
\end{conjecture}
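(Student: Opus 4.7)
The lower bound $S_{\square}(13,\cP_{6,2}^\star)\ge 7$ is witnessed by the configuration already exhibited in Appendix~\ref{appendix_hamming}, so the plan for the upper bound is to rule out $m=8$ and $m=9$ separately, exploiting the already established $S_{\square}(12,\cP_{6,2}^\star)=6$ together with Lemma~\ref{lemma_minimum_degree} as a deletion device.

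The case $m=9$ I expect to be immediate. If $\cP$ were a $(13,9)$-configuration avoiding $\cP_{6,2}^\star$, then Lemma~\ref{lemma_minimum_degree} yields $\delta_{\min}(\cP)\le\lfloor 36/13\rfloor=2$, so deleting a minimum-degree vertex preserves the forbidden-pair property and leaves a $12$-point $\cP_{6,2}^\star$-free set with at least $9-2=7$ squares, contradicting $S_{\square}(12,\cP_{6,2}^\star)=6$.

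For $m=8$ the same deletion argument forces $\delta_{\min}(\cP)=2$ exactly, because a vertex of degree at most~$1$ would again leave a $12$-point $\cP_{6,2}^\star$-free configuration with $\ge 7$ squares. Consequently $\cP$ must contain a vertex of degree exactly $2$ whose removal produces an \emph{extremal} $(12,6,\cP_{6,2}^\star)$-configuration, so every putative $(13,8,\cP_{6,2}^\star)$-configuration arises by adjoining one new point of degree exactly $2$ to such an extremal $12$-point configuration. The plan is then to: enumerate, up to isomorphism of realizable oriented square sets, all $(12,6,\cP_{6,2}^\star)$-configurations (for instance by pruning the $2$-extension catalogue behind Table~\ref{table_iso_types_exhaustive_2_extension} under the forbidden-pair constraint, supplemented with a dedicated ILP in the spirit of Lemma~\ref{lemma_ilp_triangle} to catch configurations outside the $2$-extension tree); for each such oriented square set $S$ loop over all combinatorial choices of two new squares containing the added vertex and involving old labels, discarding those that create a repeated pair or fail to be geometrically realizable; and finally certify realizability by combining Lemma~\ref{lemma_criterium_realizable} with the rational-perturbation argument of Lemma~\ref{lemma_rational_solution_satisfying_inequalities}. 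This is in principle a finite, algorithmic check.

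The expected main obstacle lies entirely in the $m=8$ step. Producing the complete catalogue of realizable $(12,6,\cP_{6,2}^\star)$-configurations, rather than merely those reachable by $2$-extension from a unit square, is delicate because some may be non-rigid in the sense of Figure~\ref{fig_flexible_7_2_configuration} or Figure~\ref{figure_p_9_3_1}, so the enumeration has to be carried out on the level of oriented square sets with an attendant realizability oracle. A secondary subtlety is that the two new squares attached to the added vertex need not be determined by three old points alone, so Lemma~\ref{lemma_two_points_determine_coordinates} only narrows the search; one must simultaneously fix the two combinatorial squares and solve the augmented linear system $L(13,S)$, making the case analysis substantially bulkier than in the $m=9$ case. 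Making this enumeration tight enough to exhaust all cases, while propagating the global $\cP_{6,2}^\star$-avoidance condition, is the crux.
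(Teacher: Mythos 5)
The statement you are addressing is labelled a \emph{conjecture} in the paper: the author only establishes $7\le S_{\square}\!\left(13,\cP_{6,2}^\star\right)\le 8$ (the lower bound from the example in Appendix~\ref{appendix_hamming}, the upper bound from the counting lemma immediately following the conjecture), and leaves the exact value open. Your proposal does not close this gap either. The parts you actually carry out are correct: the exclusion of $m=9$ via Lemma~\ref{lemma_minimum_degree} ($\delta_{\min}\le\lfloor 36/13\rfloor=2$, so deleting a minimum-degree vertex leaves a $\cP_{6,2}^\star$-free $12$-point set with at least $7$ squares, contradicting $S_{\square}\!\left(12,\cP_{6,2}^\star\right)=6$), and the reduction of $m=8$ to the statement that every putative $(13,8)$-configuration arises from an extremal $(12,6,\cP_{6,2}^\star)$-configuration by adjoining one vertex of degree exactly $2$, are both sound. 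But the step you yourself identify as the crux --- the complete enumeration of all realizable $(12,6,\cP_{6,2}^\star)$-configurations at the level of oriented square sets, including non-rigid ones not reachable by $2$-extension, followed by the check of all degree-$2$ augmentations --- is precisely the open content of the conjecture, and you describe it only as a plan ({\lq\lq}in principle a finite, algorithmic check{\rq\rq}) without executing it or supplying a certificate that it terminates with the claimed answer. A proof that defers its only nontrivial case to an unperformed computation is not a proof.

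Two further cautions on the plan itself. First, your proposed enumeration of $(12,6,\cP_{6,2}^\star)$-configurations by {\lq\lq}pruning the $2$-extension catalogue{\rq\rq} is structurally at odds with the object you are enumerating: a $\cP_{6,2}^\star$-free configuration has, by definition, no pair of points in two squares, so its squares pairwise share at most one vertex and the configuration is typically \emph{not} obtainable by $2$-extension from a unit square at all (that procedure grows point sets square-by-square through shared pairs). The entire burden therefore falls on the supplementary enumeration of oriented square sets with a realizability oracle, where the count of cases is not bounded by anything in Table~\ref{table_iso_types_exhaustive_2_extension}. Second, when you adjoin the new vertex with two new squares, realizability of the augmented oriented square set via $L(13,S)$ is necessary but not sufficient for a genuine counterexample: you must also verify that the realization does not force \emph{additional} squares (as in Figure~\ref{fig_four_imply_five}) that would create a repeated pair elsewhere, so the oracle has to test non-degenerate realizability \emph{with} the forbidden-square side conditions, as the paper indicates after that figure. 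Until the finite check is actually performed and reported, the correct conclusion remains the paper's: the value is conjectured to be $7$ and proved only to lie in $\{7,8\}$.
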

\begin{lemma}
  $$S_{\square}\!\left(13,\cP_{6,2}^\star\right)\le 8$$
\end{lemma}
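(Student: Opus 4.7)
The plan is to argue by contradiction via the averaging Lemma~\ref{lemma_average_configuration}, leveraging the already established exhaustive values $S_{\square}\!\left(12,\cP_{6,2}^\star\right)=6$. Suppose, for contradiction, that there exists a $13$-point configuration $\cP$ with $S_{\square}(\cP)\ge 9$ and such that no pair of points of $\cP$ is contained in two distinct squares (i.e., $\cP$ avoids both $\cP_{6,2}^1$ and $\cP_{6,2}^2$).

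The key observation is that the ``no pair in two squares'' property is inherited by subsets: if $\cP'\subseteq\cP$, then $\cP'$ also avoids $\cP_{6,2}^\star$. Therefore any $n'$-subconfiguration of $\cP$ is counted by $S_{\square}\!\left(n',\cP_{6,2}^\star\right)$. Applying Lemma~\ref{lemma_average_configuration} with $n=13$, $m=9$, and $n'=12$ yields a $12$-point subconfiguration $\cP'\subseteq\cP$ with
$$
S_{\square}(\cP')\ \ge\ \left\lceil 9\cdot\frac{\binom{9}{8}}{\binom{13}{12}}\right\rceil\ =\ \left\lceil \frac{81}{13}\right\rceil\ =\ 7.
$$
Since $\cP'$ also avoids $\cP_{6,2}^\star$, this gives $S_{\square}\!\left(12,\cP_{6,2}^\star\right)\ge 7$, contradicting the exhaustively verified value $S_{\square}\!\left(12,\cP_{6,2}^\star\right)=6$. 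Hence $S_{\square}\!\left(13,\cP_{6,2}^\star\right)\le 8$, as claimed.

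There is essentially no hard step in this particular proof: it is a one-shot application of the averaging bound combined with the stated exhaustive value for $n=12$. The only genuine labor is hidden in the sentence preceding the lemma (``Via exhaustive enumeration of oriented square sets we have verified $S_{\square}\!\left(12,\cP_{6,2}^\star\right)=6$''), which uses the classification machinery of Section~\ref{sec_rec_extension}. The reason the same averaging recipe stops short of the conjectured value $S_{\square}\!\left(13,\cP_{6,2}^\star\right)=7$ is that for $m=8$ the average drops to $72/13<6$, so no $12$-point subset is forced to exceed the known maximum. Closing that last gap would require either an improved exhaustive search at $n=13$ directly, or a structural argument (for instance, combining the degree bound of Lemma~\ref{lemma_minimum_degree} with the forced additional square implied by Figure~\ref{fig_four_imply_five}) — this is the genuine obstacle and is not attempted here.
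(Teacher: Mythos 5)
Your proof is correct, but it takes a genuinely different route from the paper's. The paper anchors its argument at a vertex of maximum degree: since $\delta_{\max}\ge\left\lceil 4\cdot 9/13\right\rceil=3$ by Lemma~\ref{lemma_minimum_degree}, it fixes three squares through a common point (pairwise sharing only that point, by the $\cP_{6,2}^\star$-free hypothesis), labels their ten vertices $1,\dots,10$, and counts the remaining squares over the twelve-point subsets $C_i=\{1,\dots,13\}\setminus\{i\}$: each $C_i$ carries at most $6$ squares in total, hence at most $4$ further squares for $2\le i\le 10$ and at most $3$ for $11\le i\le 13$, while every further square lies in at least $8$ of the $C_i$, giving $S_{\square}(\cP)\le 3+\left\lfloor 45/8\right\rfloor=8$. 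Your argument instead is a one-shot application of Lemma~\ref{lemma_average_configuration} combined with the (correctly noted) hereditary nature of the $\cP_{6,2}^\star$-free property, and the arithmetic is airtight: for $m=9$ the average $81/13>6$ forces a twelve-point subconfiguration with at least $\lceil 81/13\rceil=7$ squares, contradicting $S_{\square}\!\left(12,\cP_{6,2}^\star\right)=6$. Notably, this contradicts the paper's own preliminary remark that Lemma~\ref{lemma_average_configuration} only yields $S_{\square}\!\left(13,\cP_{6,2}^\star\right)\le 9$; that remark appears to come from rounding $78/9$ up to $9$ rather than down to $8$, and your computation shows the plain averaging bound already matches the refined count in the paper's proof, making the anchored-vertex argument unnecessary for the bound of $8$ (though its structure is what one would try to push toward the conjectured value $7$). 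Your closing observation about why the averaging recipe cannot reach $7$ is also accurate.
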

\begin{proof}
  Let $\cP$ be a $13$-point set such that no pair of squares shares a common pair of vertices. Since the maximum degree of $\cP$ is at least $3$, 
  we assume that the squares contained in $P=\big\{ \{1,2,3,4\}, \{1,5,6,7\}, \{1,8,9,10\}\big\}$ are spanned by $\cP$. For $2\le i\le 13$ let 
  $C_i:=\left\{1,\dots,13\right\}\backslash\{i\}$ and $s_i$ be the number of squares of $\cP$ that are not contained in $P$ and whose vertices 
  are contained in $C_i$. Since $S_{\square}\!\left(12,\cP_{6,2}^\star\right)=6$, we have $s_i\le 4$ for $2\le i\le 10$ and $s_i\le 3$ for 
  $11\le i\le 13$. Since the vertices of each square are contained in at least $8$ of the $C_i$, we have
  $$
    S_{\square}(\cP)\le \#P + \left\lfloor\frac{\sum_{i=2}^{13} s_i}{8}\right\rfloor\le 3+\left\lfloor 45/8\right\rfloor=8. 
  $$ 
\end{proof}
With this, we can conclude $S_{\square}\!\left(14,\cP_{6,2}^\star\right)\le\left\lfloor 14\cdot 8/10\right\rfloor=11$ from Lemma~\ref{lemma_average_configuration}.  
\begin{conjecture}
  $$S_{\square}\!\left(16,\cP_{6,2}^\star\right)<20=A(16,6,4)$$
\end{conjecture}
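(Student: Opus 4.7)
The plan is to reduce the geometric conjecture to the question of realizability of the unique Steiner system $S(2,4,16)$. Suppose for contradiction that $\cP$ is a $16$-point set with $20$ squares and no pair of points contained in two squares (i.e.\ $\cP$ avoids both $\cP_{6,2}^1$ and $\cP_{6,2}^2$). A pair count then shows the combinatorial structure is rigid: each square contributes $\binom{4}{2}=6$ pairs and, by $\cP_{6,2}^\star$-avoidance, these pair-contributions are disjoint across the $20$ squares; since $20\cdot 6 = 120 = \binom{16}{2}$, every pair of points lies in exactly one square. Hence the $20$ squares form a Steiner system $S(2,4,16)$, which exists and is unique up to isomorphism, namely the affine plane $AG(2,4)$ over $\mathbb{F}_4$.

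Next I would exploit the rigidity of $AG(2,4)$ to constrain the geometry. Fix $p\in\cP$; the replication number of $AG(2,4)$ is $5$, so exactly $5$ squares pass through $p$, partitioning the remaining $15$ points into $5$ ``rays'' of $3$ points each. Identifying $\R^2$ with $\C$ and placing $p=0$, the $j$-th through-$p$ square is determined by a complex step vector $v_j$, and its ray is $\{v_j,\,v_j+iv_j,\,iv_j\}$. Any further square of $\cP$ must have its $4$ vertices in $4$ distinct rays---otherwise two of its vertices would share a ray and the pair they form would also be contained in the through-$p$ square of that ray, contradicting uniqueness. A short count shows that for each of the $\binom{5}{4}=5$ choices of $4$ rays there are exactly $3$ such non-through-$p$ squares, collectively partitioning the $12$ points of those rays into three disjoint $4$-tuples (one vertex per ray per square).

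To complete the contradiction one would translate the $AG(2,4)$-block incidences into an oriented square set $S$ of order $16$ and invoke Lemma~\ref{lemma_criterium_realizable}: each of the $20$ blocks admits $3$ counterclockwise cyclic orderings (after imposing the $v_1=\min$ convention of Definition~\ref{definition_oriented_square_set}), and one would verify, working modulo the automorphism group of $AG(2,4)$ (which has order $11\,520$ and is transitive on flags), that the linear equation system $L(16,S)$ has only degenerate solutions for every consistent choice of orientations. The main obstacle is precisely this last step: the orientation search space is a priori large and a clean conceptual obstruction is not immediately apparent. My expectation is that a small incriminating sub-configuration suffices---for example, the pencil of $5$ through-$p$ squares together with a few transversal squares drawn from different parallel classes---whose non-realizability can be established by pen-and-paper linear algebra on $L(n,S)$; failing that, the enumeration reduces to a finite computation in the spirit of the paper's ILP approach in Lemma~\ref{lemma_ilp_triangle}.
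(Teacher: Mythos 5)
You are attempting to prove a statement that the paper itself leaves open: it appears only as a \emph{conjecture}, with no proof given, so there is nothing in the paper to compare your argument against. Your opening reduction is correct and genuinely valuable: a $16$-point set with $20$ squares and no pair of points in two squares covers $20\cdot 6=120=\binom{16}{2}$ pairs, each exactly once, so the squares form a Steiner system $S(2,4,16)$, which is unique up to isomorphism and equal to $AG(2,4)$. Your ray analysis at a point $p$ (five through-$p$ squares, every other square transversal to four of the five rays, three such squares per choice of four rays, matching the parallel classes of $AG(2,4)$) is also correct. This converts the conjecture into a single, concrete non-realizability question.

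But that question is precisely where your argument stops. The decisive step --- showing that for every consistent assignment of counterclockwise orientations to the $20$ blocks the system $L(16,S)$ admits no non-degenerate solution (or that every non-degenerate solution necessarily spans an extra square and hence a forbidden $\cP_{6,2}^\star$) --- is not carried out; you state that a clean obstruction ``is not immediately apparent'' and defer to a hoped-for small sub-configuration or a finite computation. Since the non-realizability of $AG(2,4)$ as a plane square configuration is exactly the content of the conjecture, and since realizable oriented square sets of comparable size do exist (the paper's Example~\ref{example_hamming_weight_6_sets} shows realizability can hinge delicately on the dimension of the solution space of $L(n,S)$ versus the degeneracy conditions of Lemma~\ref{lemma_neq_zero_constraints}), the conclusion cannot be taken on faith. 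What you have is a correct and useful reformulation, not a proof; to close it you would need either an explicit incriminating subsystem of $L(16,S)$ whose rank forces two coordinates to coincide for every orientation choice, or the finite computation you describe, executed and reported.
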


In the following our aim is to find criteria that guarantee a relatively large $2$-extension maximal subconfiguration of a point set $\cP$.
\begin{definition}
  \label{def_decomposition}
  Let $\cP\subset\R^2$ be an arbitrary non-empty point set and $p\in \cP$. A \emph{decomposition (at $p$)} is a list of subsets $\cP_i\subseteq \cP$, 
  where $1\le i\le l$, whose pairwise intersection equals $p$ and the vertices of each square containing $p$ are contained in one of the $\cP_i$. 
  The integer $l\ge 1$ is called the cardinality of the decomposition. 
  
  We speak of a \emph{$2$-extension maximal decomposition} if 
  \begin{itemize}
    \item for each square $s$ of $\cP$ the intersection of the set of vertices of $s$ with each $\cP_i$ has a cardinality in $\{0,1,4\}$ and\\[-7mm]
    \item all $\cP_i$ can be obtained by recursive $2$-extension starting from the unit square.
  \end{itemize}   
\end{definition}

\begin{example}
  Let $\cP$ be the point set from Figure~\ref{fig_four_imply_five}. Slightly abusing notation we will use the depicted labels of the points 
  instead of their coordinates. For vertex $p=2$, a decomposition of $\cP$ at $p$ is given by $\cP_1=\{2,1,3,4\}$, $\cP_2=\{2,5,9,10\}$ 
  with cardinality $2$. Another decomposition of $\cP$ at $p=2$ is given by $\cP_1=\{2,1,3,4,5,9,10\}$ with cardinality $1$. Note that both decompositions (at $p=2$) 
  are also decompositions (at $p=2$) of $\cP'=\{1,2,3,4,5,9,10\}\subset\cP$. The latter decomposition is not a $2$-extension maximal decomposition 
  of $\cP'$ at $p=2$ since $\cP_1$ cannot be obtained by recursive $2$-extension starting from the unit square. The first decomposition, the one with cardinality $2$, 
  is indeed a $2$-extension maximal decomposition of $\cP'$ at $p=2$.
  
  Now let $\cP''=\cP\backslash\{1\}$ and $p=9$. Two decompositions of $\cP''$ (or $\cP$) at $p=9$ are given by 
  $\cP_1=\{9,3,4,6,7,8\}$, $\cP_2=\{9,2,5,10\}$ and $\cP_1=\{9,2,3,4,5,6,7,8,10\}$, respectively. Note that e.g.\ $\cP_1=\{9,3,7,8\}$, $\cP_2=\{9,3,4,6\}$, $\cP_3=\{9,2,5,10\}$ 
  is not a decomposition of $\cP''$ (or $\cP$) at $p=9$. The first decomposition is not a $2$-extension maximal decomposition of $\cP''$ at $p=9$ while the second 
  is. The unique $2$-extension maximal decomposition of $\cP$ at $p=9$ is given by $\cP$ itself.   
\end{example}

\begin{definition}
  \label{def_neighborhood_sq}
  Let $\cP\subset\R^2$ be an arbitrary non-empty point set and $p\in \cP$. The \emph{neighborhood of $p$ in $\cP$} is the smallest subset $\cP'\subseteq \cP$ that contains 
  $p$ and all vertices of squares of $\cP$ that contain $p$.
\end{definition}

\begin{example}
  Let $\cP$ be the point set from Figure~\ref{fig_four_imply_five}. Slightly abusing notation we will use the depicted labels of the points 
  instead of their coordinates. The neighborhood of $p=9$ in $\cP$ is given by $\{2,3,4,5,6,7,8,9,10\}=\cP\backslash\{1\}$ and the 
  neighborhood of $p=2$ in $\cP$ is given by $\{1,2,3,4,5,9,10\}$. 
\end{example}

Let $\cP$ be the neighborhood of a given vertex $p\in\cP$, i.e., every vertex in $\cP\backslash \{p\}$ is contained in a square that contains $p$ as a vertex. 
With this, the \emph{neighborhood graph} $\cG$ (of $p$) consists of the vertices of $\cP$ except the {\lq\lq}root vertex{\rq\rq} $p$. Two vertices $x,y$ in $\cG$ form an 
edge $\{x,y\}$ (in the graph theory sense) iff $p$, $x$, and $y$ are the vertices of a square in $\cP$. Note that the square corresponding to an edge in $\cG$ 
is indeed unique (for each edge). Let $C_1,\dots, C_r$ be the connected components of $\cG$. By $\cP_1,\dots,\cP_r$ we denote the subsets of $\cP$  
such that the points in $\cP_i$ are given by $p$ and the vertices of $C_i$. So, every square of $\cP$ that contains $p$ as a vertex is contained in exactly one of the 
point sets $\cP_i$.

\begin{lemma}
  \label{lemma_connected_components_neighborhood_graph}
  Using the above notation, every $\cP_i$ can be obtained by recursive $2$-extension starting from every square containing vertex $p$. Moreover, 
  $\cP_1,\dots,\cP_r$ is a decomposition of $\cP$ at $p$.
\end{lemma}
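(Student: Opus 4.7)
The plan is to handle the decomposition claim first and then build the 2-extension sequence. The central observation is the correspondence between squares of $\cP_i$ through $p$ and triangles in the neighborhood graph $\cG$.

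For the decomposition property, the components $C_1,\dots,C_r$ are by definition pairwise disjoint subsets of $\cP\setminus\{p\}$, so the $\cP_i$ pairwise intersect exactly in $\{p\}$. If $s$ is any square of $\cP$ through $p$ with non-$p$ vertices $u,v,w$, then each of the three pairs $\{u,v\},\{u,w\},\{v,w\}$ extends with $p$ to the square $s$ and is therefore an edge of $\cG$; hence $\{u,v,w\}$ forms a triangle in $\cG$ and lies entirely in some single component $C_j$. Thus $s\subseteq \cP_j$, which verifies both conditions in the definition of a decomposition at $p$.

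For the 2-extension claim, I would introduce an auxiliary graph $H_i$ whose nodes are the squares of $\cP_i$ through $p$, with two such squares adjacent iff they share a non-$p$ vertex. The key step is to deduce the connectedness of $H_i$ from the connectedness of $C_i$. Given two squares $s,s'$ of $\cP_i$ through $p$, pick non-$p$ vertices $v\in s$ and $v'\in s'$ and a path $v=v_0,v_1,\dots,v_m=v'$ in $C_i$. Each edge $\{v_{j-1},v_j\}$ determines a unique square $t_j$ through $p$, whose three non-$p$ vertices form a triangle that meets $C_i$ and hence lies in $C_i$, so $t_j\in\cP_i$. Consecutive squares $t_{j-1},t_j$ share the vertex $v_{j-1}$; together with $s,t_1$ sharing $v_0$ and $t_m,s'$ sharing $v_m$, this gives a walk in $H_i$ from $s$ to $s'$.

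Finally, fix any square $s_0$ of $\cP_i$ through $p$, choose a spanning tree of $H_i$ rooted at $s_0$, and enumerate the squares through $p$ in a tree-compatible order. Starting from the four vertices of $s_0$, adjoin each subsequent square $t$ by applying a 2-extension to the pair $(p,v)$, where $v$ is a non-$p$ vertex shared between $t$ and some earlier square in the order; this produces the remaining two vertices of $t$, possibly coinciding with already-present points, which is explicitly permitted as a degenerate 2-extension. Since $\cP$ is the neighborhood of $p$, every non-$p$ vertex of $\cP_i$ lies in some square of $\cP$ through $p$, and by the decomposition argument that square lies in $\cP_i$; thus, after all squares of $\cP_i$ through $p$ have been processed, every point of $\cP_i$ is present. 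The only subtlety is the translation between the component structure of $\cG$ and the connectedness of $H_i$, which the triangle correspondence resolves; the construction is manifestly independent of the choice of starting square $s_0$, as required.
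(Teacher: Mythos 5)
Your proposal is correct and follows essentially the same route as the paper: paths in a connected component $C_i$ of the neighborhood graph are translated into chains of squares through $p$ that consecutively share a non-$p$ vertex, each transition being a $2$-extension at the pair $\{p,v\}$, while the triangle formed in $\cG$ by the three non-$p$ vertices of any square through $p$ keeps each such square inside a single component. You are in fact more complete than the paper, which only exhibits a chain reaching one target vertex $y$ and leaves the verification of the decomposition conditions to the reader; your auxiliary graph $H_i$ with a spanning tree and your explicit triangle argument fill in exactly those details.
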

\begin{proof}
  Let $y\in\cP_i\backslash\{p\}$ be an arbitrary vertex and $s$ and arbitrary square in $\cP_i$ that contains $p$ as a vertex. With this, let $x\neq p$ be an arbitrary 
  vertex of the square $s$ and consider a path $\left(x_0,\dots,x_l\right)$ in $C_i$ with $x_0=x$ and $x_l=y$. For $0\le i<l$ the edge $\left\{x_i,x_{i+1}\right\}$ 
  corresponds to a square $s_i$ in $\cP_i$ with vertices $p$, $x_i$, and $x_{i+1}$. W.l.o.g.\ we assume that the vertex $x$ and the path in $C_i$ are chosen in such a 
  way such that $s=s_0$. Now we observe that we can reach the square $s_{i}$ from the square $s_{i-1}$ by a $2$-extension step for all $0<i<l$. (It may happen that 
  $s_{i-1}=s_i$.)   
  
  It remains to check that the conditions of Definition~\ref{def_decomposition} are satisfied for $\cP_1,\dots,\cP_r$.
\end{proof}

For an arbitrary point set $\cP\subset \R^2$ and an arbitrary point $p\in \cP$ consider the neighborhood $\cP'$ of $p$ in $\cP$. Let $\cP_1,\dots,\cP_r$ be 
a decomposition of $\cP'$ at $p$ according to Lemma~\ref{lemma_connected_components_neighborhood_graph}. The possible candidate for the $\cP_i$ are enumerated in 
Table~\ref{table_iso_types_exhaustive_2_corner} including the information of the maximum possible degree of $p$ in $\cP_i$.

\begin{table}[htp!]
  \begin{center}
  {
    \begin{tabular}{lllllllllllllllllll}
      \hline
      $n$             & 4 & 6 & 7 & 8 & 8 &  9 & 9 & 9 & 10 & 10 & 10 & 10 & 11 & 11 & 11 & 11 & 12 & 12 \\ 
      $m$             & 1 & 2 & 3 & 3 & 4 &  4 & 5 & 6 & 4  & 5  & 6  & 7  & 5  & 6  & 7  & 8  & 5  & 6  \\ 
      \#              & 1 & 2 & 2 & 5 & 1 & 12 & 1 & 1 & 11 & 10 & 5  & 1  & 79 & 14 & 3  & 2  & 26 & 79 \\
      $\delta_{\max}$ & 1 & 2 & 3 & 3 & 3 &  4 & 4 & 4 & 4  & 5  & 5  & 5  & 5  & 5  & 6  & 6  & 5  & 6  \\
      \hline
    \end{tabular}  
    \begin{tabular}{lllllllllllllllllll}
      \hline
      $n$             & 12 & 12 & 12 & 13  & 13  & 13 & 13 & 13 & 13 & 14 & 14  & 14  & 14  & 14 & 14 & 14 \\  
      $m$             & 7  & 8  & 9  & 6   & 7   & 8  & 9  & 10 & 11 & 6  & 7   & 8   & 9   & 10 & 11 & 12 \\ 
      \#              & 18 & 10 & 2  & 398 & 159 & 41 & 11 & 4  & 2  & 64 & 533 & 251 & 131 & 42 & 4  & 4  \\
      $\delta_{\max}$ & 6  & 6  & 7  & 6   & 7   & 7  & 7  & 8  & 8  & 6  & 7   & 7   & 8   & 8  & 9  & 9  \\
      \hline
    \end{tabular}
    \begin{tabular}{lllllllllllllllllll}
      \hline
      $n$             & 15   & 15   & 15  & 15  & 15 & 15 & 15 & 15 & 15 & 16  & 16   & 16   & 16   \\   
      $m$             & 7    & 8    & 9   & 10  & 11 & 12 & 13 & 14 & 15 & 7   & 8    & 9    & 10   \\ 
      \#              & 1594 & 1191 & 500 & 202 & 77 & 41 & 8  & 4  & 1  & 159 & 2812 & 2146 & 1204 \\
      $\delta_{\max}$ & 7    & 8    & 8   & 8   & 9  & 10 & 10 & 8  & 7  & 7   & 8    & 9    & 9    \\ 
      \hline
    \end{tabular}
    \begin{tabular}{lllllllllllllllllll}
      \hline
      $n$             & 16  & 16  & 16 & 16 & 16 & 16 & 16 & 16 & 17   & 17   & 17   & 17   & 17   \\   
      $m$             & 11  & 12  & 13 & 14 & 15 & 16 & 17 & 18 & 8    & 9    & 10   & 11   & 12   \\ 
      \#              & 591 & 160 & 87 & 25 & 3  & 3  & 3  & 1  & 5539 & 6358 & 4130 & 2099 & 1107 \\ 
      $\delta_{\max}$ & 9   & 10  & 10 & 11 & 9  & 9  & 8  & 8  & 8    & 9    & 9    & 10   & 10   \\ 
      \hline
    \end{tabular}
    \begin{tabular}{lllllllllllllllllll}
      \hline
      $n$             & 17  & 17  & 17  & 17 & 17 & 17 & 17 & 17 & 17 & 17 & 18  & 18    & 18    \\   
      $m$             & 13  & 14  & 15  & 16 & 17 & 18 & 19 & 20 & 21 & 22 & 8   & 9     & 10    \\ 
      \#              & 528 & 224 & 121 & 40 & 11 & 11 & 3  & 3  & 0  & 1  & 392 & 12293 & 12568 \\ 
      $\delta_{\max}$ & 11  & 11  & 12  & 12 & 10 & 10 & 7  & 8  & 0  & 8  & 8   & 9     & 10    \\ 
      \hline
    \end{tabular}
    \begin{tabular}{lllllllllllllllllll}
      \hline
      $n$             & 18   & 18   & 18   & 18   & 18  & 18  & 18  & 18 & 18 & 18 & 18 & 18 & 18 & 18 & 18 \\   
      $m$             & 11   & 12   & 13   & 14   & 15  & 16  & 17  & 18 & 19 & 20 & 21 & 22 & 23 & 24 & 25 \\ 
      \#              & 8840 & 5276 & 2272 & 1223 & 480 & 227 & 102 & 63 & 29 & 19 & 7  & 5  & 2  & 0  & 1  \\    
      $\delta_{\max}$ & 10   & 10   & 11   & 12   & 12  & 13  & 13  & 11 & 11 & 11 & 9  & 10 & 9  & 0  & 9  \\ 
      \hline
    \end{tabular}}
    \caption{Number of non-similar neighborhood points sets $\cP$ (of vertex $1$) that are produced by recursive $2$-extension starting from a unit square (and including vertex $1$ in 
    each step) per number of points $n$ and squares $m=S_{\square}(\cP)$.}
    \label{table_iso_types_exhaustive_2_corner}
  \end{center}
\end{table}

\begin{theorem}
  \label{thm_exact_values_squares_2}
  We have $S_{\square}(14)=15$, $S_{\square}(15)=17$, $S_{\square}(16)=20$, and $S_{\square}(17)=22$.
\end{theorem}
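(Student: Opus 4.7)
The lower bounds come from the explicit configurations in Appendix~\ref{appendix_dissimilar_blbk_point_sets_squares}, each of which arises in the exhaustive recursive $2$-extension enumeration of Table~\ref{table_iso_types_exhaustive_2_extension} with the claimed maximum number of squares. So what remains is the upper bound: for $n\in\{14,15,16,17\}$ and $M(n)\in\{15,17,20,22\}$, no $n$-point set $\cP$ has $S_{\square}(\cP)\ge M(n)+1$.

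My plan mirrors the proof of Theorem~\ref{thm_exact_values_squares_1}. Assume $\cP$ is such a counterexample. Since $A(n,6,4)\in\{14,15,20,20\}<M(n)+1$ by Table~\ref{table_a_n_6_4}, we have $S_{\square}(\cP)>S_{\square}\!\left(n;\cP_{6,2}^\star\right)$, so $\cP$ contains a subset similar to $\cP_{6,2}^1$ or $\cP_{6,2}^2$; in particular $\cP$ has a nonempty $2$-extension subconfiguration of order at least $6$. Let $\cP'\subseteq\cP$ be a $2$-extension maximal subconfiguration with $n':=\#\cP'\ge 6$. If $n'=n$ then $\cP$ is itself obtainable by recursive $2$-extension and the maximum entry in the row for $n$ of Table~\ref{table_iso_types_exhaustive_2_extension} gives $S_{\square}(\cP)\le M(n)$, a contradiction.

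Hence $n'<n$, and Lemma~\ref{lemma_2_extension_maximal_mixed_triangle_square} combined with Theorem~\ref{thm_exact_values_squares_1} yields
$$
S_{\square}(\cP)\le S_{\square}(n')+S_{\rit-3\square}(n-n').
$$
For every $n'\ge 7$ when $n\in\{14,15\}$, and for every $n'\ge 8$ when $n\in\{16,17\}$, the right-hand side falls strictly below $M(n)+1$ using Theorem~\ref{thm_exact_values_squares_1} and Lemma~\ref{lemma_ub_mixed_triangle_square}, provided the latter has been extended to $S_{\rit-3\square}(k)$ for $k\in\{10,11\}$. This extension should go through by the same $1$-extension and ILP techniques underlying Lemma~\ref{lemma_ilp_triangle}, since only mixed triangle/square counts on a moderate number of points are needed.

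The main obstacle is the handful of borderline small-$n'$ cases $(n,n')\in\{(14,6),(15,6),(16,6),(16,7),(17,6),(17,7)\}$, where either the lemma's bound equals $M(n)+1$ or is too weak. Here one has to exploit the rigid classification of small $2$-extension-maximal configurations: for $n'=6$, $\cP'$ is similar to one of $\cP_{6,2}^1,\cP_{6,2}^2$, and for $n'=7$ to one of $\cP_{7,3}^1,\cP_{7,3}^2$. The deficit must be made up by $\cP\setminus\cP'$ attaining $S_{\rit-3\square}(n-n')$ with near equality, which forces $\cP\setminus\cP'$ to be similar to one of the finitely many extremal configurations in Appendix~\ref{appendix_dissimilar_blbk_point_sets_mixed} and, moreover, every right isosceles triangle in $\cP\setminus\cP'$ not already sitting inside a square of $\cP\setminus\cP'$ must complete to a fourth vertex inside $\cP'$. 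Pairing the fixed similarity type of $\cP'$ with the extremal shape of $\cP\setminus\cP'$ leaves only finitely many geometric possibilities, and in each I would produce a pair of points of $\cP$ shared between a square of $\cP'$ and a square crossing the cut, contradicting $2$-extension maximality of $\cP'$. Supplementary leverage comes from Lemma~\ref{lemma_minimum_degree}: the degree sum $4S_{\square}(\cP)\ge 4(M(n)+1)$ forces a vertex $p\in\cP$ of degree at least $\lceil 4(M(n)+1)/n\rceil$, whose neighborhood decomposes via Lemma~\ref{lemma_connected_components_neighborhood_graph} into $2$-extension components whose sizes are bounded by Table~\ref{table_iso_types_exhaustive_2_corner}, typically yielding a $2$-extension subconfiguration around $p$ of size well above the exceptional range. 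For the residual configurations at $n=17$ I expect the case list to be too large for pencil-and-paper inspection and would close it with an ILP in the spirit of Lemma~\ref{lemma_ilp_triangle}, adapted by replacing triangle counts with square counts and seeding with the non-similar $2$-extension types of Table~\ref{table_iso_types_exhaustive_2_extension}.
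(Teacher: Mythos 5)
Your reduction of the problem is sound as far as it goes, and its first half coincides with the paper's: the lower bounds come from the appendix, the values $A(n,6,4)<M(n)+1$ force a pair of points lying in two squares and hence a $2$-extension maximal subconfiguration $\widetilde{\cP}$ of order at least $6$, and Lemma~\ref{lemma_2_extension_maximal_mixed_triangle_square} together with Lemma~\ref{lemma_ub_mixed_triangle_square} and Theorem~\ref{thm_exact_values_squares_1} kills every case except $\#\widetilde{\cP}=6$ for $n\in\{14,15\}$ and $\#\widetilde{\cP}\in\{6,7\}$ for $n\in\{16,17\}$. You correctly identify these as the borderline cases. The problem is that everything after that point in your proposal is a plan rather than a proof, and the plan has concrete holes. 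First, near-equality in the bound $S_{\square}(\cP)\le S_{\square}(\cP')+S_{\rit-3\square}(\cP\setminus\cP')$ does not force $\cP\setminus\cP'$ to be similar to one of the extremal configurations of Appendix~\ref{appendix_dissimilar_blbk_point_sets_mixed}: for $(n,n')=(16,6)$ you need $S_{\rit-3\square}(\cP\setminus\cP')\ge 19$ on $10$ points, and without knowing $S_{\rit-3\square}(10)$ (not established in the paper, and its determination is itself nontrivial) you cannot even quantify the slack, let alone conclude that only the extremal shapes survive. Second, the claimed contradiction ``a pair of points shared between a square of $\cP'$ and a square crossing the cut'' is asserted, not derived; nothing in your setup produces such a square. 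Third, deferring $n=17$ to an unspecified square-counting ILP leaves the hardest case entirely open.

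The missing idea is the one the paper actually uses to close these cases: pass to a vertex $p$ of maximum degree (Lemma~\ref{lemma_minimum_degree} gives $\delta_{\max}\ge 5$ resp.\ $\ge 6$), decompose its neighborhood into the connected components of the neighborhood graph (Lemma~\ref{lemma_connected_components_neighborhood_graph}), and observe that each component is itself obtainable by recursive $2$-extension and therefore has at most $\#\widetilde{\cP}\in\{6,7\}$ points. The separate enumeration of corner-rooted $2$-extension configurations with their maximal root degrees (Table~\ref{table_iso_types_exhaustive_2_corner}) then pins the component size profile down to $(6,6,4)$, $(7,7)$, $(7,7,4)$, or $(6,6,6)$, any square meeting two components in the wrong way would merge them into a larger $2$-extension subconfiguration contradicting the bound on $\#\widetilde{\cP}$, and a short count of the squares using the few vertices outside the neighborhood finishes each case with numbers like $6+9=15<21$. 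You mention this device only as ``supplementary leverage'' and never run it; without it (or a genuine substitute) the borderline cases, which are the entire content of the theorem, remain unproved.
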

\begin{proof}
  Examples attaining the stated number of squares are given in  Appendix~\ref{appendix_dissimilar_blbk_point_sets_squares}. 
  Now let $\cP$ be an $n$-point set with $n\in\{14,15,16,17\}$ and more squares than proposed in our statement. Let $\widetilde{\cP}$ be a 
  $2$-extension maximal subconfiguration of maximum cardinality. From Lemma~\ref{lemma_2_extension_maximal_mixed_triangle_square} 
  and Lemma~\ref{lemma_ub_mixed_triangle_square} we conclude $\#\widetilde{\cP}\le 6$ if $n\in\{14,15\}$ and $\#\widetilde{\cP}\le 7$ if $n\in\{16,17\}$. 
  From Lemma~\ref{lemma_minimum_degree} we conclude $\delta_{\max}\ge 5$ if $n\in\{14,15\}$ and $\delta_{\max}\ge 6$ if $n\in\{16,17\}$. 
  Now let $p$ be a vertex with maximum degree in $\cP$ and $\cP'$ be the neighborhood of $p$ in $\cP$. Let $\cP_1,\dots,\cP_r$ be 
  a decomposition of $\cP'$ at $p$ according to Lemma~\ref{lemma_connected_components_neighborhood_graph}. W.l.o.g.\ we assume $\#\cP_1 \ge \dots\ge\#\cP_r$. 
  Additionally, we have the conditions $\#\cP_i\le \#\widetilde{\cP}$ for $1\le i\le r$, $1+\sum_{i=1}^r (\#\cP_i-1)\le n$, and $\sum_{i=1}^r \delta_{\max}(\cP_i)\ge \delta_{\max}(\cP)$. 
  This leaves only very few choices for $(\#\cP_1,\dots,\#\cP_r)$, which we enumerate in the following.
  \begin{itemize}
    \item $n=14$: $(6,6,4)$;\\[-7mm]
    \item $n=15$: $(6,6,4)$;\\[-7mm]
    \item $n=16$: $(7,7)$, $(7,7,4)$, $(6,6,6)$;\\[-7mm]
    \item $n=17$: $(7,7)$, $(7,7,4)$, $(6,6,6)$.
  \end{itemize}   
  If there would exist indices $1\le i,j\le r$ with $i\neq j$ and a square $s$ in $\cP'$ such that at least two vertices on $s$ are contained in $\cP_i$ and 
  at least one vertex of $s$ would be contained in $\cP_j$, then $\cP_i\cup\cP_j$ can be obtained by recursive $2$-extension starting from the unit square. 
  However, since $\#(\cP_i\cup\cP_j)=\#\cP_i+\cP_j-1$ this would yield a contradiction to the maximum possible size of $\widetilde{\cP}$. Thus, we 
  have $S_{\square}(\cP')\le\sum_{i=1}^n S_{\square}(\#\cP_i)$.
  
  For $n=14$ we compute $S_{\square}(\cP)=S_{\square}(\cP')\le 5<16$. For $n=15$ we have $\delta_{\max}=5$ and $S_{\square}(\cP')\le 5$. The unique vertex outside 
  of $\cP'$ can have degree at most $5$ so that $S_{\square}(\cP)\le 5+5=10<18$. For $n=16$ and $(\#\cP_1,\dots,\#\cP_r)=(7,7,4)$ or $(6,6,6)$ we have 
  $S_{\square}(\cP)=S_{\square}(\cP')\le 7<21$. For $n=16$ the only remaining case is $(\#\cP_1,\dots,\#\cP_r)=(7,7)$ with $\delta_{\max}=6$. Here, each square $s$ 
  not contained in $\cP'$ can have at most one vertex with $\cP_1$ or $\cP_2$ in common since we would obtain a contradiction to $\#\widetilde{P}\le 7$. Thus each such 
  square uses at least two out of the three vertices in $\cP\backslash\cP'$, so that 
  $$
    S_{\square}(\cP)\le S_{\square}(\cP')+\frac{3\cdot\delta_{\max}}{2}\le 6+9=15<21.
  $$    
  For $n=17$ we have $\delta_{\max}\le 7$. If $(\#\cP_1,\dots,\#\cP_r)=(7,7,4)$ or $(6,6,6)$, then we have $S_{\square}(\cP')\le 7$, so that
  $$
    S_{\square}(\cP)\le S_{\square}(\cP') +(\#\cP-\#\cP')\cdot\delta_{\max}\le 7+7=14<23.
  $$
  For $n=17$ the only remaining case is $(\#\cP_1,\dots,\#\cP_r)=(7,7)$ with $\delta_{\max}=6$. Here, each square $s$ 
  not contained in $\cP'$ can have at most one vertex with $\cP_1$ or $\cP_2$ in common since we would obtain a contradiction to $\#\widetilde{P}\le 7$. Thus each such 
  square uses at least two out of the four vertices in $\cP\backslash\cP'$, so that 
  $$
    S_{\square}(\cP)\le S_{\square}(\cP')+\frac{4\cdot\delta_{\max}}{2}\le 6+12=18<23.
  $$
\end{proof}

Of course we can apply an incomplete $2$-extension approach considering point sets with {\lq\lq}many{\rq\rq} squares for the next iteration only, see Table ~\ref{table_lower_bounds_number_of_contained_squares} 
and Table~\ref{table_lower_bounds_number_of_contained_squares2} (in Appendix~\ref{appendix_dissimilar_blbk_point_sets_squares}).
\begin{table}[htp]
\begin{center}
\begin{tabular}{lllllllllllllllllll}
\hline
  $n$ &                   4 & 5 & 6 & 7 & 8 & 9 & 10 & 11 & 12 & 13 & 14 & 15 & 16 & 17 & 18 & 19 & 20 & 21 \\
  $S_{\square}(n)\ge$ &   1 & 1 & 2 & 3 & 4 & 6 &  7 &  8 & 11 & 13 & 15 & 17 & 20 & 22 & 25 & 28 & 32 & 37 \\  
\hline
\end{tabular}
\begin{tabular}{lllllllllllllllllll}
\hline
  $n$ &                  22 & 23 & 24 & 25 & 26 & 27 & 28 & 29 & 30 & 31 & 32 & 33 & 34 &  35 & 36 &  37 \\
  $S_{\square}(n)\ge$ &  40 & 43 & 47 & 51 & 56 & 60 & 65 & 70 & 75 & 81 & 88 & 92 & 97 & 103 & 109 & 117 \\
\hline
\end{tabular}
\begin{tabular}{llllllllllllllllll}
\hline
  $n$ &                   38 & 39 & 40 & 41 & 42 & 43 & 44 & 45 & 46 & 47 & 48 & 49 & 50 \\
  $S_{\square}(n)\ge$ &  123 & 130 & 137 & 144 & 151 & 158 & 166 & 175 & 182 & 189 & 198 & 207 & 216\\
\hline
\end{tabular}
\caption{Lower bounds for $S_{\square}(n)$ for $4\le n\le 50$.}
\label{table_lower_bounds_number_of_contained_squares}
\end{center}
\end{table}

\section*{Acknowledgments}
The present investigation is inspired by an extensive discussion on the Sequence Fans Mailing List with contributions from many people. 
We especially thank Beno\^{i}t Jubin, Peter Munn, Hugo van der Sanden, and N.J.A.~Sloane for many remarks on a predecessor of the present text. 
Proposition~\ref{prop_grid_size_bound} arose from a fruitful discussion with Warren D.~Smith.


\begin{thebibliography}{10}

\bibitem{abrego2000maximum}
Bernardo~M. {\'A}brego and Silvia Fern{\'a}ndez-Merchant, \emph{On the maximum
  number of equilateral triangles, i}, Discrete \& Computational Geometry
  \textbf{23} (2000), no.~1, 129--135.

\bibitem{abrego2016number}
Bernardo~M. {\'A}brego, Silvia Fern{\'a}ndez-Merchant, Daniel~J. Katz, and
  Levon Kolesnikov, \emph{On the number of similar instances of a pattern in a
  finite set}, The Electronic Journal of Combinatorics \textbf{23} (2016),
  no.~4, P4--39.

\bibitem{abrego2011maximum}
Bernardo~M. {\'A}brego, Silvia Fern{\'a}ndez-Merchant, and David~B. Roberts,
  \emph{On the maximum number of isosceles right triangles in a finite point
  set}, Involve, a Journal of Mathematics \textbf{4} (2011), no.~1, 27--42.

\bibitem{agoston2020improved}
P{\'e}ter {\'A}goston and D{\"o}m{\"o}t{\"o}r P{\'a}lv{\"o}lgyi, \emph{An
  improved constant factor for the unit distance problem}, arXiv preprint
  2006.06285 (2020).

\bibitem{brass2002combinatorial}
Peter Brass, \emph{Combinatorial geometry problems in pattern recognition},
  Discrete and Computational Geometry \textbf{28} (2002), no.~4, 495--510.

\bibitem{RPiDG}
Peter Brass, William O.~J. Moser, and J{\'a}nos Pach, \emph{Research problems
  in discrete geometry}, Springer, New York, 2005.

\bibitem{brass2005problems}
Peter Brass and J{\'a}nos Pach, \emph{Problems and results on geometric
  patterns}, Graph theory and combinatorial optimization, Springer, 2005,
  pp.~17--36.

\bibitem{brouwer2006new}
Andries~E. Brouwer, James~B. Shearer, Neil J.~A. Sloane, and Warren~D. Smith,
  \emph{A new table of constant weight codes}, IEEE Transactions on Information
  Theory \textbf{36} (2006), no.~6, 1334--1380.

\bibitem{elekes1994similar}
Gy{\"o}rgy Elekes and Paul Erd\H{o}s, \emph{Similar configurations and pseudo
  grids}, Intuitive Geometry \textbf{63} (1994), 85--104.

\bibitem{erdos1946sets}
Paul Erd{\"o}s, \emph{On sets of distances of n points}, The American
  Mathematical Monthly \textbf{53} (1946), no.~5, 248--250.

\bibitem{erdHos1976some}
Paul Erd{\H{o}}s and George Purdy, \emph{Some extremal problems in geometry
  iv}, Congressus Numerantium \textbf{17} (1976), 307--322.

\bibitem{harborth1986point}
Heiko Harborth and Ingrid Mengersen, \emph{Point sets with many unit circles},
  Discrete Mathematics \textbf{60} (1986), 193--197.

\bibitem{kolesnikov2015number}
Levon Kolesnikov, \emph{On the number of copies of a pattern in a finite set},
  Ph.D. thesis, California State University, Northridge, 2015.

\bibitem{laczkovich1997number}
Mikl{\'o}s Laczkovich and Imre~Z. Ruzsa, \emph{The number of homothetic
  subsets}, The Mathematics of Paul Erd{\"o}s II, Springer, 1997, pp.~294--302.

\bibitem{pach2003many}
J{\'a}nos Pach and Rom Pinchasi, \emph{How many unit equilateral triangles can
  be generated by n points in convex position?}, The American Mathematical
  Monthly \textbf{110} (2003), no.~5, 400--406.

\bibitem{pach1992repeated}
J{\'a}nos Pach and Micha Sharir, \emph{Repeated angles in the plane and related
  problems}, Journal of Combinatorial Theory, series A \textbf{59} (1992),
  no.~1, 12--22.

\bibitem{diplomarbeit_schade}
Christian Schade, \emph{Exakte maximale {A}nzahlen gleicher {A}bst\"ande},
  Master's thesis, TU Braunschweig, 1993.

\bibitem{spencer1984unit}
Joel Spencer, Endre Szemer{\'e}di, and William~T. Trotter, \emph{Unit distances
  in the euclidean plane}, Graph theory and combinatorics, Academic Press,
  1984, pp.~294--304.

\bibitem{swanepoel2018combinatorial}
Konrad~J. Swanepoel, \emph{Combinatorial distance geometry in normed spaces},
  New Trends in Intuitive Geometry, Springer, 2018, pp.~407--458.

\bibitem{valtr2005strictly}
Pavel Valtr, \emph{Strictly convex norms allowing many unit distances and
  related touching questions}, Preprint (2005).

\bibitem{van1991finding}
Marc~J. Van~Kreveld and Mark~T. De~Berg, \emph{Finding squares and rectangles
  in sets of points}, BIT Numerical Mathematics \textbf{31} (1991), no.~2,
  202--219.

\bibitem{xu2004number}
Changqing Xu and Ren Ding, \emph{The number of isosceles right triangles
  determined by n points in convex position in the plane}, Discrete \&
  Computational Geometry \textbf{31} (2004), no.~3, 491--499.

\end{thebibliography}

\providecommand{\bysame}{\leavevmode\hbox to3em{\hrulefill}\thinspace}
\providecommand{\MR}{\relax\ifhmode\unskip\space\fi MR }
\providecommand{\MRhref}[2]{%
  \href{http://www.ams.org/mathscinet-getitem?mr=#1}{#2}
}
\providecommand{\href}[2]{#2}

\appendix
\section*{Appendix}
Here we list the known equivalence classes of $n$-point sets attaining the best lower bounds known \texttt{blbk} for 
$S_{\cQ}(n)$ for different variants of $\cQ$. All of our examples are situated on subsets of the 
integer grid $\Z^2$. By an {\lq\lq}x{\rq\rq} we denote a chosen point and by {\lq\lq}.{\rq\rq} a grid 
point that is not part of the point set. The occurring numbers just state the numbers of points and we separate
different number of points by {\lq\lq}$\vert${\rq\rq}.

\section{Dissimilar point sets for isosceles right triangles}
\label{appendix_dissimilar_blbk_point_sets_rit}

Here we are interested in the (known) similarity classes of point sets that attain $S_{\rit}(n)$ for $n\le 14$ or 
match the lower bounds stated in Table~\ref{table_lower_bounds_number_of_contained_irt}. The number of 
similarity classes is listed in Table~\ref{table_blbk_similarity_classes_rit}.  

\begin{table}[htp!]
  \begin{center}
    \begin{tabular}{llllllllllllllllllll}
      \hline
      $n$  & 3 & 4 & 5 & 6 & 7 & 8 & 9 & 10 & 11 & 12 & 13 & 14 & 15 & 16 & 17 & 18 & 19 & 20 & 21 \\ 
      $\#$ & 1 & 1 & 1 & 1 & 2 & 5 & 1 & 1  & 1  & 2  & 2  & 1  & 1  & 1  & 1  & 2  & 2  & 1  & 1  \\ 
      \hline 
    \end{tabular}
    \begin{tabular}{llllllllllllllllllll}
      \hline
      $n$  & 22 & 23 & 24 & 25 & 26 & 27 & 28 & 29 & 30 & 31 & 32 & 33 & 34 & 35 & 36 & 37 & 38 & 39 \\ 
      $\#$ & 2  & 1  & 1  & 1  & 1  & 3  & 2  & 2  & 1  & 2  & 1  & 1  & 1  & 1  & 1  & 1  & 1  & 1  \\ 
      \hline 
    \end{tabular}
    \begin{tabular}{llllllllllllllllllll}
      \hline
      $n$  & 40 & 41 & 42 & 43 & 44 & 45 & 46 & 47 & 48 & 49 & 50 \\ 
      $\#$ & 1  & 1  & 1  & 2  & 2  & 3  & 3  & 1  & 2  & 1  & 1  \\ 
      \hline 
    \end{tabular}
    \caption{Number of known dissimilar point sets attaining \texttt{blbk} for isosceles right triangles.}
    \label{table_blbk_similarity_classes_rit}
  \end{center}
\end{table}

\begin{verbatim}
   |    |     |     |          |                x.x.      |     |      
   |    | .x. | xx. | xxx x.x. | x.x.x xxx. xxx .xxx .xx. | xxx | xxx. 
xx | xx | xxx | xxx | xxx .xxx | .x.x. .xxx xxx x.x. xxxx | xxx | xxxx 
x. | xx | .x. | .x. | .x. x.x. | x.x.x x.x. xx. .x.. .xx. | xxx | xxx. 
3  | 4  | 5   | 6   | 7        | 8                        | 9   | 10   
\end{verbatim}
\begin{verbatim}
     |            |             |        |        |         | ...x... 
     |            |             |        | .x.x.. | ..x.x.. | ..x.x.. 
     |            | ..x.. .x.x. | .x.x.. | x.x.x. | .x.x.x. | .x.x.x.
xxx. | .xxx. .xx. | .xxx. x.x.x | x.x.x. | .xxx.x | x.xxx.x | x.xxx.x
xxxx | xxxxx xxxx | xxxxx .xxx. | .xxx.x | x.x.x. | .x.x.x. | .x.x.x.
xxx. | .xxx. xxxx | .xxx. x.x.x | x.x.x. | .x.x.. | ..x.x.. | ..x.x..
.x.. | ..x.. .xx. | ..x.. .x.x. | .x.x.. | ..x... | ...x... | ...x...
11   | 12         | 13          | 14     | 15     | 16      | 17
\end{verbatim}
\begin{verbatim}
       .x.x... |       .x.x.x. |       |       | ..x.x..        | ..x.x..
.x.x.. ..x.x.. |       ..x.x.. |       |       | .x.x.x.        | .x.x.x.
x.x.x. .x.x.x. | .xxx. .x.x.x. | .xxx. | .xxx. | x.xxx.x .xxx.. | x.xxx.x
.xxx.x x.xxx.x | xxxxx x.xxx.x | xxxxx | xxxxx | .x.x.x. xxxxx. | .xxx.x.
x.xxx. .x.x.x. | xxxx. .x.x.x. | xxxxx | xxxxx | x.x.x.x xxxxxx | x.x.x.x
.x.x.x ..x.x.. | xxxxx ..x.x.. | xxxxx | xxxxx | .x.x.x. xxxxx. | .x.x.x.
..x.x. ...x... | .x.x. ...x... | .x.x. | .xxx. | ..x.x.. .xxx.. | ..x.x..
18             | 19            | 20    | 21    | 22             | 23
\end{verbatim}

\pagebreak

\begin{verbatim}
        |         |          |          ..x.x...          | ..x.x... .x.x.x..
..x.x.. | ..x.x.. | ..x.x... | .x.x.x.. .x.x.x.. .x.x.x.. | .x.x.x.. x.x.x.x.
.x.x.x. | .x.x.x. | .x.x.x.. | ..x.x.x. x.xxx.x. x.x.x.x. | x.x.x.x. .x.x.x.x
x.xxx.x | x.xxx.x | x.xxx.x. | .x.xxx.x .xxxxx.x .x.x.x.x | .x.xxx.x x.xxx.x.
.xxxxx. | .xxxxx. | .xxxxx.x | x.xxxxx. x.xxx.x. x.xxx.x. | x.xxx.x. .x.x.x.x
x.x.x.x | x.xxx.x | x.xxx.x. | .x.xxx.x .x.x.x.. .x.x.x.x | .x.x.x.x x.x.x.x.
.x.x.x. | .x.x.x. | .x.x.x.. | ..x.x.x. ..x.x... x.x.x.x. | ..x.x.x. .x.x.x..
..x.x.. | ..x.x.. | ..x.x... | ...x.x.. ...x.... .x.x.x.. | ...x.x.. ....x...
24      | 25      | 26       | 27                         | 28
\end{verbatim}
\begin{verbatim}
                   |           |           ...x.x... | ...x.x... | ...x.x...
.x.x.x.. ..x.x.x.. | ..x.x.x.. | ..x.x.x.. ..x.x.x.. | ..x.x.x.. | ..x.x.x..
x.x.x.x. .x.x.x.x. | .x.x.x.x. | .x.x.x.x. .x.x.x.x. | .x.x.x.x. | .x.x.x.x.
.x.x.x.x ..x.x.x.. | x.x.x.x.. | x.x.x.x.x x.x.x.x.x | x.x.x.x.x | x.x.x.x.x
x.xxx.x. .x.xxx.x. | .x.xxx.x. | .x.xxx.x. .x.xxx.x. | .x.xxx.x. | .x.xxx.x.
.x.x.x.x x.x.x.x.x | x.x.x.x.x | x.x.x.x.x x.x.x.x.. | x.x.x.x.x | x.x.x.x.x
x.x.x.x. .x.x.x.x. | .x.x.x.x. | .x.x.x.x. .x.x.x.x. | .x.x.x.x. | .x.x.x.x.
.x.x.x.. ..x.x.x.. | ..x.x.x.. | ..x.x.x.. ..x.x.x.. | ..x.x.x.. | ..x.x.x..
..x.x... ...x.x... | ...x.x... | ...x.x... .....x... | ...x..... | ...x.x...
29                 | 30        | 31                  | 32        | 33
\end{verbatim}
\begin{verbatim}
...x.x... |         |         |         | ..x.x.x.. | ..x.x.x.. | ..x.x.x..
..x.x.x.. |         |         |         | .x.x.x.x. | .x.x.x.x. | .x.x.x.x.
.x.x.x.x. | ..xxx.. | ..xxx.. | ..xxx.. | x.x.x.x.x | x.x.x.x.x | x.x.x.x.x
x.xxx.x.x | .xxxxx. | .xxxxx. | .xxxxx. | .x.xxx.x. | .x.xxx.x. | .x.xxx.x.
.x.xxx.x. | xxxxxxx | xxxxxxx | xxxxxxx | x.x.x.x.x | x.xxx.x.x | x.xxxxx.x
x.x.x.x.x | xxxxxx. | xxxxxxx | xxxxxxx | .x.x.x.x. | .x.x.x.x. | .x.x.x.x.
.x.x.x.x. | xxxxxxx | xxxxxxx | xxxxxxx | x.x.x.x.x | x.x.x.x.x | x.x.x.x.x
..x.x.x.. | .xxxxx. | .xxxxx. | .xxxxx. | .x.x.x.x. | .x.x.x.x. | .x.x.x.x.
...x.x... | ..x.x.. | ..x.x.. | ..xxx.. | ..x.x.x.. | ..x.x.x.. | ..x.x.x..
34        | 35      | 36      | 37      | 38        | 39        | 40
\end{verbatim}
\begin{verbatim}
          |            |            ..x.x.x... | ..x.x.x... ..x.x.x...
..x.x.x.. | ..x.x.x... | ..x.x.x... .x.x.x.x.. | .x.x.x.x.. .x.x.x.x..
.x.x.x.x. | .x.x.x.x.. | .x.x.x.x.. x.x.x.x.x. | x.x.x.x.x. x.x.x.x.x.
x.x.x.x.x | x.x.x.x.x. | x.x.x.x.x. .x.xxx.x.. | .x.x.x.x.x .x.xxx.x.x
.x.xxx.x. | .x.xxx.x.x | .x.xxx.x.x x.xxxxx.x. | x.x.xxx.x. x.xxxxx.x.
x.xxxxx.x | x.xxxxx.x. | x.xxxxx.x. .x.xxx.x.x | .x.xxx.x.x .x.xxx.x.x
.x.xxx.x. | .x.xxx.x.. | .x.xxx.x.x x.x.x.x.x. | x.x.x.x.x. x.x.x.x.x.
x.x.x.x.x | x.x.x.x.x. | x.x.x.x.x. .x.x.x.x.. | .x.x.x.x.. .x.x.x.x..
.x.x.x.x. | .x.x.x.x.. | .x.x.x.x.. ..x.x.x... | ..x.x.x.x. ..x.x.x...
..x.x.x.. | ..x.x.x... | ..x.x.x... .....x.... | ...x.x.... .....x....
41        | 42         | 43                    | 44
\end{verbatim}

\pagebreak

\begin{verbatim}
                               |            ....x.x....
...x.x.x... ..x.x.x...         | ..x.x.x... ...x.x.x... ...x.x.x... 
..x.x.x.x.. .x.x.x.x..         | .x.x.x.x.. ..x.x.x.x.. ..x.x.x.x..
.x.x.x.x.x. x.x.x.x.x.         | x.x.x.x.x. .x.x.x.x.x. .x.x.x.x.x.
..x.xxx.x.. .x.xxx.x.x .xxxxx. | .x.x.x.x.x x.x.xxx.x.x x.x.xxx.x..
.x.xxxxx.x. x.xxxxx.x. xxxxxxx | x.x.xxx.x. .x.x.x.x.x. .x.xxxxx.x.
x.x.xxx.x.x .x.xxx.x.x xxxxxxx | .x.xxx.x.x x.x.x.x.x.x x.x.xxx.x.x
.x.x.x.x.x. x.x.x.x.x. xxxxxxx | x.x.x.x.x. .x.x.x.x.x. .x.x.x.x.x.
..x.x.x.x.. .x.x.x.x.. xxxxxxx | .x.x.x.x.x ..x.x.x.x.. ..x.x.x.x..
...x.x.x... ..x.x.x... xxxxxxx | ..x.x.x.x. ...x.x.x... ...x.x.x...
....x.x.... ...x.x.... .xxxxx. | ...x.x.x.. ....x.x.... ....x.x.... 
45                             | 46
\end{verbatim}
\begin{verbatim}
....x.x.... | ....x.x.... ....x.x.... | ....x.x.... | ..x.x.x....
...x.x.x... | ...x.x.x... ...x.x.x... | ...x.x.x... | ...x.x.x...
..x.x.x.x.. | ..x.x.x.x.. ..x.x.x.x.. | ..x.x.x.x.. | ..x.x.x.x..
.x.x.x.x.x. | .x.x.x.x.x. .x.x.x.x.x. | .x.x.x.x.x. | .x.x.x.x.x.
x.x.xxx.x.x | x.x.xxx.x.x x.x.xxx.x.x | x.x.xxx.x.x | x.x.xxx.x.x
.x.xxx.x.x. | .x.xxxxx.x. .x.xxxxx.x. | .x.xxxxx.x. | .x.xxxxx.x.
x.x.x.x.x.x | x.x.xxx.x.x x.x.x.x.x.x | x.x.xxx.x.x | x.x.xxx.x.x
.x.x.x.x.x. | .x.x.x.x.x. .x.x.x.x.x. | .x.x.x.x.x. | .x.x.x.x.x.
..x.x.x.x.. | ..x.x.x.x.. ..x.x.x.x.. | ..x.x.x.x.. | ..x.x.x.x..
...x.x.x... | ...x.x.x... ...x.x.x... | ...x.x.x... | ...x.x.x...
....x.x.... | ....x...... ....x.x.... | ....x.x.... | ....x.x....
47          | 48                      | 49          | 50
\end{verbatim}


\section{Dissimilar point sets attaining the value in Lemma~\ref{lemma_ub_mixed_triangle_square}}
\label{appendix_dissimilar_blbk_point_sets_mixed}

The enumeration of all $n$-point sets with $n\le 9$ that can be obtained by $1$-extension yields the following point sets 
attaining $S_{\rit-3\square}(n)$ as mentioned in Lemma~\ref{lemma_ub_mixed_triangle_square}:

\begin{verbatim}
   |     | xxx .x. | x.x. xxx. xxx x.x.x xxx.x | x.x.x  
xx | xxx | .x. xxx | .xxx .x.x .x. .xx.. .x... | .xxx.
x. | .x. | x.. .x. | ..x. x... x.x ..x.. ..x.. | ..x..
3  | 4   | 5       | 6                         | 7
\end{verbatim}
\begin{verbatim}
           |       | x.x.. .x...
x.x. x.x.. | x.x.. | .xxx. ..x.x
.xxx .xx.. | .xxx. | ..x.x .xxx.
..x. ..x.x | ..x.x | .x... x.x..
.x.. .x... | .x... | ..x.. ...x.
7          | 8     | 9
\end{verbatim}
We remark that the only possible case where $S_{\rit-3\square}(n)$ might be attained a point set that cannot be obtained by $1$-extension is given by 
$n=7$ and $\S_{\rit}(\cP)=10$. However, Lemma~\ref{lemma_1_extension_maximal_rit} then implies that $\cP$ contains a $5$-point set $\cP'$ with 
$S_{\rit}(\cP')=8$ as a subconfiguration. Since $\cP'$ contains a square, we have $S_{\rit-3\square}(\cP)\le 7$. Thus, the above list is exhaustive. 

\section{Dissimilar point sets attaining \texttt{blbk} for $S_{\square}(n)$}
\label{appendix_dissimilar_blbk_point_sets_squares}

\begin{table}[htp!]
  \begin{center}
    \begin{tabular}{llllllllllllllllllll}
      \hline
      $n$  & 4 & 6 & 7 & 8 & 9 & 10 & 11 & 12 & 13 & 14 & 15 & 16 & 17 & 18 & 19 & 20 & 21 & 22 & 23 \\ 
      $\#$ & 1 & 2 & 2 & 2 & 1 & 1  & 5  & 1  & 1  & 2  & 2  & 2  & 4  & 3  & 5  & 1  & 1  & 1  & 3  \\ 
      \hline 
    \end{tabular}
    \begin{tabular}{llllllllllllllllllll}
      \hline
      $n$  & 24 & 25 & 26 & 27 & 28 & 29 & 30 & 31 & 32 & 33 & 34 & 35 & 36 & 37 & 38 & 39 & 40 & 41 \\ 
      $\#$ & 1  & 1  & 1  & 2  & 2  & 1  & 4  & 1  & 1  & 2  & 4  & 2  & 3  & 1  & 1  & 2  & 1  & 2  \\
      \hline 
    \end{tabular}
    \begin{tabular}{llllllllllllllllllll}
      \hline
      $n$  & 42 & 43 & 44 & 45 & 46 & 47 & 48 & 49 & 50 & 51 & 52 & 53 & 54 & 55 & 56 & 57 & 58 & 59 \\ 
      $\#$ & 2  & 6  & 2  & 1  & 1  & 7  & 2  & 1  & 4  & 1  & 1  & 1  & 2  & 1  & 5  & 3  & 1  & 1  \\
      \hline 
    \end{tabular}
    \begin{tabular}{llllllllllllllllllll}
      \hline
      $n$  & 60 & 61 & 62 & 63 & 64 & 65 & 66 & 67 & 68 & 69 & 70 & 71 & 72 & 73 & 74 & 75 & 76 & 77 \\ 
      $\#$ & 1  & 1  & 9  & 3  & 2  & 2  & 1  & 4  & 1  & 1  & 2  & 4  & 2  & 8  & 1  & 3  & 2 &  1  \\
      \hline 
    \end{tabular}
    \begin{tabular}{llllllllllllllllllll}
      \hline
      $n$  & 78 & 79 & 80 & 81 & 82 & 83 & 84 & 85 & 86 & 87 & 88 & 89 & 90 & 91 & 92 & 93 & 94 & 95 \\ 
      $\#$ & 3  & 4  & 2  & 4  & 3  & 2  & 2  & 1  & 4  & 1  & 1  & 1  & 2  & 2  & 1  & 2  & 2  &  5 \\  
      \hline 
    \end{tabular}
    \begin{tabular}{llllllllllllllllllll}
      \hline
      $n$  & 96 & 97 & 98 & 99 & 100 \\ 
      $\#$ & 1  & 1  & 1  & 2  & 2   \\
      \hline 
    \end{tabular}
    \caption{Number of known pairwise dissimilar point sets attaining \texttt{blbk} for $S_{\square}(n)$.}
    \label{table_blbk_similarity_classes}
  \end{center}
\end{table}

Here we list the known equivalence classes of $n$-point sets attaining \texttt{blbk} for $S_{\square}(n)$ mentioned in 
Table~\ref{table_blbk_similarity_classes} for $n\le 100$. Here the numbers just state the numbers of points and we separate different 
number of points by {\lq\lq}$\vert${\rq\rq}. 
\begin{verbatim}
   |         |         |          |     |      | .xx. xxx.      xxx.        
   |     xx. | xxx xx. | .xx. xxx | xxx | xxx. | xxxx xxxx xxxx xxxx .xxx.  
xx | xxx xxx | xxx xxx | xxxx xxx | xxx | xxxx | xxxx xxx. xxxx xxx. xxxxx 
xx | xxx .x. | .x. .xx | .xx. xx. | xxx | xxx. | .x.. .x.. xxx. ..x. .xxx. 
4  | 6       | 7       | 8        | 9   | 10   | 11                        
\end{verbatim}
\begin{verbatim}
     |      |           |            |            | .xxx. .xxx. 
.xx. | xxx. | xxx. xxxx | .xxx. xxxx | .xxx. xxxx | xxxx. xxxxx xxxx. xxxx.  
xxxx | xxxx | xxxx xxxx | xxxxx xxxx | xxxxx xxxx | xxxxx xxxxx xxxxx xxxxx
xxxx | xxxx | xxxx xxxx | xxxx. xxxx | xxxxx xxxx | xxxx. .xxx. xxxxx xxxx.
.xx. | .xx. | .xxx .xx. | .xxx. xxx. | .xxx. xxxx | ..x.. ..x.. .xxx. xxxx.
12   | 13   | 14        | 15         | 16         | 17
\end{verbatim}
\begin{verbatim}
.xxx. xxxx.       | .xxx. .xxx. .xxx. .xxx. xxxx. | .xxx. | .xxx. | xxxx.
xxxxx xxxx. xxxx. | xxxxx xxxxx xxxxx xxxx. xxxxx | xxxxx | xxxxx | xxxxx
xxxxx xxxxx xxxxx | xxxxx xxxxx xxxxx xxxxx xxxxx | xxxxx | xxxxx | xxxxx
xxxx. xxxx. xxxxx | xxxx. xxxx. xxxxx .xxxx xxxx. | xxxxx | xxxxx | xxxxx
..x.. ..x.. xxxx. | ..xx. .xx.. ..x.. .xxx. ..x.. | .xx.. | .xxx. | .xxx.
18                | 19                            | 20    | 21    | 22
\end{verbatim}
\begin{verbatim}
                   |        |        |        | .xxxx. .xxxx. | .xxxx. .xxxx.
.xxxx. xxxx. xxxxx | .xxxx. | .xxxx. | .xxxx. | xxxxxx xxxxx. | xxxxx. xxxxxx
xxxxx. xxxxx xxxxx | xxxxx. | xxxxxx | xxxxxx | xxxxxx xxxxxx | xxxxxx xxxxxx
xxxxxx xxxxx xxxxx | xxxxxx | xxxxxx | xxxxxx | xxxxxx xxxxxx | xxxxxx xxxxxx
xxxxx. xxxxx xxxxx | xxxxx. | xxxxx. | xxxxxx | .xxxx. .xxxx. | xxxxx. .xxxx.
.xxx.. .xxxx .xxx. | .xxxx. | .xxxx. | .xxxx. | ..x... ..xx.. | ..xx.. ..xx.. 
23                 | 24     | 25     | 26     | 27            | 28
\end{verbatim}
\begin{verbatim}
.xxxx. | .xxxx. .xxxx. .xxxx. .xxxx. | .xxxx. | .xxxx. | .xxxx.. xxxxx.   
xxxxxx | xxxxxx xxxxxx xxxxxx xxxxx. | xxxxxx | xxxxxx | xxxxxx. xxxxxx 
xxxxxx | xxxxxx xxxxxx xxxxxx xxxxxx | xxxxxx | xxxxxx | xxxxxxx xxxxxx 
xxxxxx | xxxxxx xxxxxx xxxxxx xxxxxx | xxxxxx | xxxxxx | xxxxxx. xxxxxx 
xxxxx. | xxxxx. xxxxxx xxxxx. .xxxxx | xxxxxx | xxxxxx | xxxxxx. xxxxxx  
..xx.. | ..xxx. ..xx.. .xxx.. .xxxx. | .xxx.. | .xxxx. | .xxxx.. .xxxx.   
29     | 30                          | 31     | 32     | 33
\end{verbatim}
\begin{verbatim}
        .xxxx..                 |         .xxxx.. |         .xxxx.. ..xxx..    
.xxxxx. xxxxxx. .xxxxx. .xxxx.. | .xxxxx. xxxxxx. | .xxxxx. xxxxxx. .xxxxx. 
xxxxxx. xxxxxx. xxxxxx. xxxxxx. | xxxxxx. xxxxxx. | xxxxxx. xxxxxxx xxxxxxx
xxxxxx. xxxxxxx xxxxxxx xxxxxxx | xxxxxxx xxxxxxx | xxxxxxx xxxxxxx xxxxxxx
xxxxxxx xxxxxx. xxxxxx. xxxxxxx | xxxxxxx xxxxxx. | xxxxxxx xxxxxx. xxxxxxx
xxxxxx. .xxxx.. xxxxxx. xxxxxx. | xxxxxx. .xxxxx. | xxxxxx. .xxxxx. .xxxxx. 
.xxxx.. ...x... .xxxx.. .xxxx.. | .xxxx.. ...x... | .xxxxx. ...x... ..xx... 
34                              | 35              | 36 
\end{verbatim}
\begin{verbatim}
..xxx.. | .xxxx.. | .xxxxx. .xxxx.. | .xxxxx. | .xxxxx. .xxxxx. | .xxxxx. ..xxxx..
.xxxxx. | .xxxxx. | .xxxxx. xxxxxx. | xxxxxx. | xxxxxxx xxxxxx. | xxxxxxx .xxxxxx.
xxxxxxx | xxxxxxx | xxxxxxx xxxxxxx | xxxxxxx | xxxxxxx xxxxxxx | xxxxxxx xxxxxxx.
xxxxxxx | xxxxxxx | xxxxxxx xxxxxxx | xxxxxxx | xxxxxxx xxxxxxx | xxxxxxx xxxxxxxx
xxxxxxx | xxxxxxx | xxxxxxx xxxxxxx | xxxxxxx | xxxxxxx xxxxxxx | xxxxxxx xxxxxxx.
.xxxxx. | .xxxxx. | .xxxxx. .xxxxx. | .xxxxx. | .xxxxx. xxxxxx. | xxxxxx. .xxxxxx.
..xxx.. | ..xxx.. | ..xxx.. ..xxx.. | ..xxx.. | ..xxx.. ..xxx.. | ..xxx.. ..xxxx..
37      | 38      | 39              | 40      | 41              | 42
\end{verbatim}
\begin{verbatim}
..xxxx.. .xxxxx.. .xxxxx. .xxxxx. .xxxxx. .xxxxx. | .xxxxx. ..xxxx.. | .xxxxx.
.xxxxxx. .xxxxxx. xxxxxxx xxxxxxx xxxxxxx xxxxxx. | xxxxxxx .xxxxxx. | xxxxxxx
xxxxxxxx xxxxxxx. xxxxxxx xxxxxxx xxxxxxx xxxxxxx | xxxxxxx xxxxxxxx | xxxxxxx
xxxxxxxx xxxxxxxx xxxxxxx xxxxxxx xxxxxxx xxxxxxx | xxxxxxx xxxxxxxx | xxxxxxx
xxxxxxx. xxxxxxx. xxxxxxx xxxxxxx xxxxxxx xxxxxxx | xxxxxxx xxxxxxxx | xxxxxxx
.xxxxxx. .xxxxxx. xxxxxx. xxxxxxx xxxxxx. .xxxxxx | xxxxxxx .xxxxxx. | xxxxxxx
..xxxx.. ..xxxx.. ..xxxx. ..xxx.. .xxxx.. .xxxxx. | .xxxx.. ..xxxx.. | .xxxxx.
43                                                | 44               | 45 
\end{verbatim}
\begin{verbatim}
         | ..xxxx.. ..xxxx.. ..xxxx.. .xxxxx..                    .xxxxx..  
.xxxxx.. | .xxxxxx. .xxxxxx. .xxxxxx. xxxxxxx. .xxxxx.. ..xxxxx.. xxxxxxx.
xxxxxxx. | xxxxxxxx xxxxxxx. xxxxxxx. xxxxxxx. xxxxxxx. .xxxxxxx. xxxxxxx.
xxxxxxx. | xxxxxxxx xxxxxxxx xxxxxxxx xxxxxxxx xxxxxxxx .xxxxxxx. xxxxxxxx
xxxxxxxx | xxxxxxxx xxxxxxxx xxxxxxxx xxxxxxx. xxxxxxxx xxxxxxxxx xxxxxxx.
xxxxxxx. | .xxxxxx. .xxxxxx. xxxxxxx. xxxxxxx. xxxxxxx. .xxxxxxx. xxxxxxx.
xxxxxxx. | .xxxxxx. .xxxxxx. .xxxxx.. .xxxxx.. xxxxxxx. .xxxxxxx. .xxxxx..
.xxxxx.. | ...x.... ...xx... ...xx... ....x... .xxxxx.. ..xxxxx.. ...x....
46       | 47
\end{verbatim}
\begin{verbatim}
..xxxx.. ..xxxx.. | ..xxxx.. | ..xxxx.. ..xxxx.. ..xxxx.. ..xxxx.. | ..xxxx..
.xxxxxx. .xxxxxx. | .xxxxxx. | .xxxxxx. .xxxxxx. .xxxxxx. .xxxxxx. | .xxxxxx.
xxxxxxx. xxxxxxxx | xxxxxxxx | xxxxxxxx xxxxxxxx xxxxxxxx xxxxxxx. | xxxxxxxx
xxxxxxxx xxxxxxxx | xxxxxxxx | xxxxxxxx xxxxxxxx xxxxxxxx xxxxxxxx | xxxxxxxx
xxxxxxxx xxxxxxxx | xxxxxxxx | xxxxxxxx xxxxxxxx xxxxxxxx xxxxxxxx | xxxxxxxx
xxxxxxx. .xxxxxx. | xxxxxxx. | xxxxxxx. xxxxxxxx xxxxxxx. .xxxxxxx | xxxxxxxx
.xxxxxx. .xxxxxx. | .xxxxxx. | .xxxxxx. .xxxxxx. .xxxxxx. .xxxxxx. | .xxxxxx.
...xx... ...xx... | ...xx... | ...xxx.. ...xx... ..xxx... ..xxxx.. | ..xxx...
48                | 49       | 50                                  | 51
\end{verbatim}
\begin{verbatim}
..xxxx.. | .xxxxx.. | .xxxxx.. .xxxxxx. | .xxxxxx. | ..xxxxx.. .xxxxxx.
.xxxxxx. | .xxxxxx. | xxxxxxx. .xxxxxx. | xxxxxxx. | .xxxxxxx. xxxxxxx.
xxxxxxxx | xxxxxxxx | xxxxxxxx xxxxxxxx | xxxxxxxx | xxxxxxxx. xxxxxxxx
xxxxxxxx | xxxxxxxx | xxxxxxxx xxxxxxxx | xxxxxxxx | xxxxxxxx. xxxxxxxx
xxxxxxxx | xxxxxxxx | xxxxxxxx xxxxxxxx | xxxxxxxx | xxxxxxxxx xxxxxxxx
xxxxxxxx | xxxxxxxx | xxxxxxxx xxxxxxxx | xxxxxxxx | xxxxxxxx. xxxxxxxx
.xxxxxx. | .xxxxxx. | .xxxxxx. .xxxxxx. | .xxxxxx. | .xxxxxxx. xxxxxxx.
..xxxx.. | ..xxxx.. | ..xxxx.. ..xxxx.. | ..xxxx.. | ..xxxx... ..xxxx..
52       | 53       | 54                | 55       | 56 
\end{verbatim}

\medskip

\begin{verbatim}
..xxxxx.. .xxxxxx. ..xxxxx.. | ..xxxxx.. ..xxxxx.. ..xxxxx.. | ..xxxxx..
.xxxxxxx. xxxxxxxx .xxxxxxx. | .xxxxxxx. .xxxxxxx. .xxxxxxx. | .xxxxxxx.
xxxxxxxx. xxxxxxxx .xxxxxxx. | xxxxxxxx. xxxxxxxx. xxxxxxxx. | xxxxxxxx.
xxxxxxxxx xxxxxxxx xxxxxxxxx | xxxxxxxxx xxxxxxxxx xxxxxxxxx | xxxxxxxxx
xxxxxxxx. xxxxxxxx xxxxxxxxx | xxxxxxxx. xxxxxxxxx xxxxxxxxx | xxxxxxxxx
xxxxxxxx. xxxxxxxx .xxxxxxx. | xxxxxxxx. .xxxxxxx. xxxxxxxx. | xxxxxxxx.
.xxxxxxx. .xxxxxx. .xxxxxxx. | .xxxxxxx. .xxxxxxx. .xxxxxxx. | .xxxxxxx.
..xxxx... ..xxxx.. ..xxxxx.. | ..xxxxx.. ..xxxxx.. ..xxxx... | ..xxxxx..
56                           | 57                            | 58
\end{verbatim}
\begin{verbatim}
          |           | ..xxxxx.. | ..xxxxx.. ..xxxx... ..xxxxx.. ..xxxxx..
..xxxxx.. | ..xxxxx.. | .xxxxxxx. | .xxxxxxx. .xxxxxxx. .xxxxxxx. .xxxxxxx.
.xxxxxxx. | .xxxxxxx. | xxxxxxxxx | xxxxxxxxx xxxxxxxx. xxxxxxxx. xxxxxxxx.
xxxxxxxxx | xxxxxxxxx | xxxxxxxxx | xxxxxxxxx xxxxxxxxx xxxxxxxxx xxxxxxxxx
xxxxxxxxx | xxxxxxxxx | xxxxxxxxx | xxxxxxxxx xxxxxxxxx xxxxxxxxx xxxxxxxxx
xxxxxxxxx | xxxxxxxxx | xxxxxxxxx | xxxxxxxxx xxxxxxxxx xxxxxxxxx xxxxxxxxx
xxxxxxxx. | xxxxxxxxx | .xxxxxxx. | .xxxxxxx. .xxxxxxx. .xxxxxxx. .xxxxxxx.
.xxxxxxx. | .xxxxxxx. | ..xxxxx.. | .xxxxxx.. .xxxxxx.. .xxxxxx.. .xxxxxx..
..xxxxx.. | ..xxxxx.. | ....x.... | ....x.... ...xxx... ...xx.... ....xx...
59        | 60        | 61        | 62
\end{verbatim}
\begin{verbatim}
..xxxxx.. ..xxxx... ..xxxxx.. ..xxxxx.. ...xxxx... | ..xxxxx.. ..xxxx...
.xxxxxxx. .xxxxxxx. .xxxxxxx. .xxxxxxx. ..xxxxxx.. | .xxxxxxx. .xxxxxxx.
.xxxxxxx. .xxxxxxx. xxxxxxxxx xxxxxxxx. .xxxxxxxx. | xxxxxxxx. xxxxxxxx.
xxxxxxxxx xxxxxxxxx xxxxxxxxx xxxxxxxxx .xxxxxxxx. | xxxxxxxxx xxxxxxxxx
xxxxxxxxx xxxxxxxxx xxxxxxxxx xxxxxxxxx xxxxxxxxxx | xxxxxxxxx xxxxxxxxx
xxxxxxxxx xxxxxxxxx xxxxxxxxx xxxxxxxxx .xxxxxxxx. | xxxxxxxxx xxxxxxxxx
.xxxxxxx. .xxxxxxx. .xxxxxxx. .xxxxxxx. .xxxxxxxx. | .xxxxxxx. .xxxxxxx. 
.xxxxxx.. .xxxxxxx. ..xxxxx.. ..xxxxx.. ..xxxxxx.. | .xxxxxx.. .xxxxxxx.
...xxx... ...xxx... ...xx.... ...xxx... ...xxxx... | ...xxx... ...xxx...
62                                                 | 63
\end{verbatim}
\begin{verbatim}
..xxxxx.. | ..xxxxx.. ..xxxxx.. | ..xxxxx.. ..xxxxx.. | ..xxxxx.. | ..xxxxx..
.xxxxxxx. | .xxxxxxx. .xxxxxxx. | .xxxxxxx. .xxxxxxx. | .xxxxxxx. | .xxxxxxx.
.xxxxxxx. | xxxxxxxx. xxxxxxxx. | xxxxxxxxx xxxxxxxx. | xxxxxxxxx | xxxxxxxxx
xxxxxxxxx | xxxxxxxxx xxxxxxxxx | xxxxxxxxx xxxxxxxxx | xxxxxxxxx | xxxxxxxxx
xxxxxxxxx | xxxxxxxxx xxxxxxxxx | xxxxxxxxx xxxxxxxxx | xxxxxxxxx | xxxxxxxxx
xxxxxxxxx | xxxxxxxxx xxxxxxxxx | xxxxxxxxx xxxxxxxxx | xxxxxxxxx | xxxxxxxxx
.xxxxxxx. | .xxxxxxx. xxxxxxxx. | .xxxxxxx. xxxxxxxx. | xxxxxxxx. | xxxxxxxx.
.xxxxxxx. | .xxxxxxx. .xxxxxx.. | .xxxxxxx. .xxxxxxx. | .xxxxxxx. | .xxxxxxx.
...xxx... | ...xxx... ...xxx... | ...xxx... ...xxx... | ...xxx... | ...xxxx..
63        | 64                  | 65                  | 66        | 67 
\end{verbatim}
\begin{verbatim}
..xxxxx.. ..xxxxx.. ..xxxxx.. | ..xxxxx.. | ..xxxxx.. | ..xxxxx... .xxxxxx..
.xxxxxxx. .xxxxxxx. .xxxxxxx. | .xxxxxxx. | .xxxxxxx. | .xxxxxxx.. .xxxxxxx.
xxxxxxxxx xxxxxxxxx xxxxxxxx. | xxxxxxxxx | xxxxxxxxx | xxxxxxxxx. xxxxxxxxx
xxxxxxxxx xxxxxxxxx xxxxxxxxx | xxxxxxxxx | xxxxxxxxx | xxxxxxxxx. xxxxxxxxx
xxxxxxxxx xxxxxxxxx xxxxxxxxx | xxxxxxxxx | xxxxxxxxx | xxxxxxxxxx xxxxxxxxx
xxxxxxxxx xxxxxxxxx xxxxxxxxx | xxxxxxxxx | xxxxxxxxx | xxxxxxxxx. xxxxxxxxx
xxxxxxxxx xxxxxxxx. .xxxxxxxx | xxxxxxxxx | xxxxxxxxx | xxxxxxxxx. xxxxxxxxx
.xxxxxxx. .xxxxxxx. .xxxxxxx. | .xxxxxxx. | .xxxxxxx. | .xxxxxxx.. .xxxxxxx.
...xxx... ..xxxx... ..xxxxx.. | ..xxxx... | ..xxxxx.. | ..xxxxx... ..xxxxx..
67                            | 68        | 69        | 70
\end{verbatim}
\begin{verbatim}
..xxxxx... ..xxxxxx.. .xxxxxx.. .xxxxxxx. | ..xxxxx... ..xxxxxx.. | ..xxxxxx..
.xxxxxxxx. .xxxxxxx.. xxxxxxxx. .xxxxxxx. | .xxxxxxxx. .xxxxxxxx. | .xxxxxxxx.
xxxxxxxxx. xxxxxxxxx. xxxxxxxxx xxxxxxxxx | xxxxxxxxx. xxxxxxxxx. | xxxxxxxxx.
xxxxxxxxx. xxxxxxxxx. xxxxxxxxx xxxxxxxxx | xxxxxxxxx. xxxxxxxxx. | xxxxxxxxx.
xxxxxxxxxx xxxxxxxxxx xxxxxxxxx xxxxxxxxx | xxxxxxxxxx xxxxxxxxxx | xxxxxxxxxx
xxxxxxxxx. xxxxxxxxx. xxxxxxxxx xxxxxxxxx | xxxxxxxxx. xxxxxxxxx. | xxxxxxxxxx
xxxxxxxxx. xxxxxxxxx. xxxxxxxxx xxxxxxxxx | xxxxxxxxx. xxxxxxxxx. | xxxxxxxxx.
.xxxxxxx.. .xxxxxxx.. .xxxxxxx. .xxxxxxx. | .xxxxxxxx. .xxxxxxx.. | .xxxxxxx.. 
..xxxxx... ..xxxxx... ..xxxxx.. ..xxxxx.. | ..xxxxx... ..xxxxx... | ..xxxxx...
71                                        | 72                    | 73 
\end{verbatim}
\begin{verbatim}
                      ..xxxxx... ..xxxxx...                       ...xxxx... 
..xxxxx... ..xxxxxx.. .xxxxxxx.. .xxxxxxx.. ..xxxxxx.. ..xxxxxx.. .xxxxxxx..
.xxxxxxxx. .xxxxxxxx. xxxxxxxxx. xxxxxxxxx. .xxxxxxxx. .xxxxxxxx. .xxxxxxxx.
xxxxxxxxx. xxxxxxxxx. xxxxxxxxx. xxxxxxxxx. xxxxxxxxx. .xxxxxxxx. xxxxxxxxx.
xxxxxxxxxx xxxxxxxxxx xxxxxxxxxx xxxxxxxxxx xxxxxxxxx. xxxxxxxxxx xxxxxxxxxx
xxxxxxxxxx xxxxxxxxxx xxxxxxxxx. xxxxxxxxx. xxxxxxxxxx xxxxxxxxxx xxxxxxxxxx
xxxxxxxxx. xxxxxxxxx. xxxxxxxxx. xxxxxxxxx. xxxxxxxxx. xxxxxxxxxx xxxxxxxxx.
xxxxxxxxx. xxxxxxxxx. .xxxxxxxx. .xxxxxxxx. xxxxxxxxx. .xxxxxxxx. .xxxxxxxx.
.xxxxxxxx. .xxxxxxx.. ..xxxxxx.. ..xxxxxx.. .xxxxxxxx. .xxxxxxxx. ..xxxxxx..
..xxxxx... ..xxxxx... ....x..... .....x.... ..xxxxx... ..xxxxx... ....xx....
73 
\end{verbatim}
\begin{verbatim}
           |            ..xxxxx...            |            ...xxxx...
..xxxxxx.. | ..xxxxxx.. .xxxxxxx.. ..xxxxxx.. | ..xxxxxx.. ..xxxxxx..
.xxxxxxxx. | .xxxxxxxx. xxxxxxxxx. .xxxxxxxx. | .xxxxxxxx. .xxxxxxxx.
.xxxxxxxx. | xxxxxxxxx. xxxxxxxxx. xxxxxxxxx. | xxxxxxxxx. xxxxxxxxxx
xxxxxxxxxx | xxxxxxxxxx xxxxxxxxxx xxxxxxxxxx | xxxxxxxxxx xxxxxxxxxx
xxxxxxxxxx | xxxxxxxxxx xxxxxxxxxx xxxxxxxxxx | xxxxxxxxxx xxxxxxxxxx
xxxxxxxxxx | xxxxxxxxxx xxxxxxxxx. xxxxxxxxxx | xxxxxxxxxx xxxxxxxxxx
.xxxxxxxx. | xxxxxxxxx. .xxxxxxxx. .xxxxxxxx. | xxxxxxxxx. .xxxxxxxx.
.xxxxxxxx. | .xxxxxxxx. ..xxxxxx.. .xxxxxxxx. | .xxxxxxxx. ..xxxxxx..
..xxxxxx.. | ..xxxxx... ....xx.... ..xxxxxx.. | ..xxxxxx.. ...xxxx...
74         | 75                               | 76 
\end{verbatim}
\begin{verbatim}
...xxxx... | ...xxxx... ..xxxxxx.. ...xxxx... | ..xxxxxx.. ..xxxxxx.. ..xxxxx... 
.xxxxxxx.. | .xxxxxxxx. .xxxxxxxx. .xxxxxxx.. | .xxxxxxxx. .xxxxxxxx. .xxxxxxxx.
.xxxxxxxx. | .xxxxxxxx. .xxxxxxxx. .xxxxxxxx. | xxxxxxxxx. .xxxxxxxx. .xxxxxxxx. 
xxxxxxxxxx | xxxxxxxxxx xxxxxxxxxx xxxxxxxxxx | xxxxxxxxxx xxxxxxxxxx xxxxxxxxxx 
xxxxxxxxxx | xxxxxxxxxx xxxxxxxxxx xxxxxxxxxx | xxxxxxxxxx xxxxxxxxxx xxxxxxxxxx 
xxxxxxxxxx | xxxxxxxxxx xxxxxxxxxx xxxxxxxxxx | xxxxxxxxxx xxxxxxxxxx xxxxxxxxxx 
xxxxxxxxxx | xxxxxxxxxx xxxxxxxxxx xxxxxxxxxx | xxxxxxxxxx xxxxxxxxxx xxxxxxxxxx 
.xxxxxxxx. | .xxxxxxxx. .xxxxxxxx. .xxxxxxxx. | .xxxxxxxx. .xxxxxxxx. .xxxxxxxx. 
..xxxxxx.. | ..xxxxxx.. ..xxxxxx.. ..xxxxxxx. | ..xxxxxx.. ..xxxxxx.. ..xxxxxx.. 
...xxxx... | ...xxxx... ....xx.... ...xxxx... | ....xx.... ...xxx.... ...xxxx... 
77         | 78                               | 79 
\end{verbatim} 

\pagebreak

\begin{verbatim}
...xxxx... | ...xxxx... ..xxxxxx.. | ..xxxxx... ..xxxxxx.. ..xxxxxx.. ..xxxxx...
.xxxxxxxx. | .xxxxxxxx. .xxxxxxxx. | .xxxxxxxx. .xxxxxxxx. .xxxxxxxx. .xxxxxxxx.
.xxxxxxxx. | .xxxxxxxx. .xxxxxxxx. | xxxxxxxxx. xxxxxxxxx. .xxxxxxxx. .xxxxxxxx.
xxxxxxxxxx | xxxxxxxxxx xxxxxxxxxx | xxxxxxxxxx xxxxxxxxxx xxxxxxxxxx xxxxxxxxxx
xxxxxxxxxx | xxxxxxxxxx xxxxxxxxxx | xxxxxxxxxx xxxxxxxxxx xxxxxxxxxx xxxxxxxxxx
xxxxxxxxxx | xxxxxxxxxx xxxxxxxxxx | xxxxxxxxxx xxxxxxxxxx xxxxxxxxxx xxxxxxxxxx
xxxxxxxxxx | xxxxxxxxxx xxxxxxxxxx | xxxxxxxxxx xxxxxxxxxx xxxxxxxxxx xxxxxxxxxx
.xxxxxxxx. | .xxxxxxxx. .xxxxxxxx. | .xxxxxxxx. .xxxxxxxx. .xxxxxxxx. .xxxxxxxx.
.xxxxxxx.. | .xxxxxxxx. ..xxxxxx.. | .xxxxxxx.. ..xxxxxx.. .xxxxxxx.. .xxxxxxxx. 
...xxxx... | ...xxxx... ...xxxx... | ...xxxx... ...xxxx... ...xxxx... ...xxxx...
79         | 80                    | 81
\end{verbatim} 
\begin{verbatim}
..xxxxxx.. ..xxxxx... ..xxxxxx.. | ..xxxxxx.. ..xxxxxx.. | ..xxxxxx.. ..xxxxxx..
.xxxxxxxx. .xxxxxxxx. .xxxxxxxx. | .xxxxxxxx. .xxxxxxxx. | .xxxxxxxx. .xxxxxxxx.
xxxxxxxxx. xxxxxxxxx. .xxxxxxxx. | xxxxxxxxx. xxxxxxxxx. | xxxxxxxxxx xxxxxxxxx.
xxxxxxxxxx xxxxxxxxxx xxxxxxxxxx | xxxxxxxxxx xxxxxxxxxx | xxxxxxxxxx xxxxxxxxxx
xxxxxxxxxx xxxxxxxxxx xxxxxxxxxx | xxxxxxxxxx xxxxxxxxxx | xxxxxxxxxx xxxxxxxxxx
xxxxxxxxxx xxxxxxxxxx xxxxxxxxxx | xxxxxxxxxx xxxxxxxxxx | xxxxxxxxxx xxxxxxxxxx
xxxxxxxxxx xxxxxxxxxx xxxxxxxxxx | xxxxxxxxxx xxxxxxxxxx | xxxxxxxxxx xxxxxxxxxx
.xxxxxxxx. .xxxxxxxx. .xxxxxxxx. | .xxxxxxxx. xxxxxxxxx. | .xxxxxxxx. xxxxxxxxx.
.xxxxxxx.. .xxxxxxxx. .xxxxxxxx. | .xxxxxxxx. .xxxxxxx.. | .xxxxxxxx. .xxxxxxxx.
...xxxx... ...xxxx... ...xxxx... | ...xxxx... ...xxxx... | ...xxxx... ...xxxx...
82                               | 83                    | 84
\end{verbatim} 
\begin{verbatim}
..xxxxxx.. | ..xxxxxx.. ..xxxxxx.. ..xxxxxx.. ..xxxxxx.. | ..xxxxxx.. | ..xxxxxx..
.xxxxxxxx. | .xxxxxxxx. .xxxxxxxx. .xxxxxxxx. .xxxxxxxx. | .xxxxxxxx. | .xxxxxxxx.
xxxxxxxxxx | xxxxxxxxxx xxxxxxxxxx xxxxxxxxx. xxxxxxxxxx | xxxxxxxxxx | xxxxxxxxxx
xxxxxxxxxx | xxxxxxxxxx xxxxxxxxxx xxxxxxxxxx xxxxxxxxxx | xxxxxxxxxx | xxxxxxxxxx
xxxxxxxxxx | xxxxxxxxxx xxxxxxxxxx xxxxxxxxxx xxxxxxxxxx | xxxxxxxxxx | xxxxxxxxxx
xxxxxxxxxx | xxxxxxxxxx xxxxxxxxxx xxxxxxxxxx xxxxxxxxxx | xxxxxxxxxx | xxxxxxxxxx
xxxxxxxxxx | xxxxxxxxxx xxxxxxxxxx xxxxxxxxxx xxxxxxxxxx | xxxxxxxxxx | xxxxxxxxxx
xxxxxxxxx. | xxxxxxxxx. xxxxxxxxx. .xxxxxxxxx xxxxxxxxxx | xxxxxxxxxx | xxxxxxxxxx
.xxxxxxxx. | .xxxxxxxx. .xxxxxxxx. .xxxxxxxx. .xxxxxxxx. | .xxxxxxxx. | .xxxxxxxx.
...xxxx... | ...xxxxx.. ..xxxxx... ..xxxxxx.. ...xxxx... | ..xxxxx... | ..xxxxxx..
85         | 86                                          | 87         | 88 
\end{verbatim}
\begin{verbatim}
            | ..xxxxxx...             | ...xxxx.... ...xxxxx... | ...xxxxx...
..xxxxxx... | .xxxxxxxx.. ..xxxxxx... | ..xxxxxxx.. ..xxxxxxx.. | ..xxxxxxx..
.xxxxxxxx.. | xxxxxxxxxx. .xxxxxxxx.. | .xxxxxxxxx. .xxxxxxxxx. | .xxxxxxxxx.
xxxxxxxxxx. | xxxxxxxxxx. xxxxxxxxxx. | xxxxxxxxxx. .xxxxxxxxx. | xxxxxxxxxx.
xxxxxxxxxx. | xxxxxxxxxx. xxxxxxxxxx. | xxxxxxxxxxx xxxxxxxxxxx | xxxxxxxxxxx
xxxxxxxxxxx | xxxxxxxxxxx xxxxxxxxxxx | xxxxxxxxxxx xxxxxxxxxxx | xxxxxxxxxxx
xxxxxxxxxx. | xxxxxxxxxx. xxxxxxxxxxx | xxxxxxxxxxx xxxxxxxxxxx | xxxxxxxxxxx
xxxxxxxxxx. | xxxxxxxxxx. xxxxxxxxxx. | .xxxxxxxxx. .xxxxxxxxx. | .xxxxxxxxx.
xxxxxxxxxx. | .xxxxxxxx.. xxxxxxxxxx. | .xxxxxxxxx. .xxxxxxxxx. | .xxxxxxxxx.
.xxxxxxxx.. | ..xxxxxx... .xxxxxxxx.. | ..xxxxxxx.. ..xxxxxxx.. | ..xxxxxxx..
..xxxxxx... | .....x..... ..xxxxxx... | ....xxx.... ....xxx.... | ....xxx....
89          | 90                      | 91                      | 92
\end{verbatim}

\pagebreak

\begin{verbatim}
...xxxxx... ...xxxxx... | ...xxxxx... ...xxxxx... | ...xxxxx... ...xxxxx...
..xxxxxxx.. ..xxxxxxx.. | ..xxxxxxx.. .xxxxxxxx.. | .xxxxxxxx.. ..xxxxxxx..
.xxxxxxxxx. .xxxxxxxxx. | .xxxxxxxxx. .xxxxxxxxx. | .xxxxxxxxx. .xxxxxxxxx.
xxxxxxxxxxx xxxxxxxxxx. | xxxxxxxxxxx xxxxxxxxxx. | xxxxxxxxxxx xxxxxxxxxxx
xxxxxxxxxxx xxxxxxxxxxx | xxxxxxxxxxx xxxxxxxxxxx | xxxxxxxxxxx xxxxxxxxxxx
xxxxxxxxxxx xxxxxxxxxxx | xxxxxxxxxxx xxxxxxxxxxx | xxxxxxxxxxx xxxxxxxxxxx
xxxxxxxxxxx xxxxxxxxxxx | xxxxxxxxxxx xxxxxxxxxxx | xxxxxxxxxxx xxxxxxxxxxx
.xxxxxxxxx. xxxxxxxxxx. | xxxxxxxxxx. xxxxxxxxxx. | xxxxxxxxxx. xxxxxxxxxx.
.xxxxxxxxx. .xxxxxxxxx. | .xxxxxxxxx. .xxxxxxxxx. | .xxxxxxxxx. .xxxxxxxxx.
..xxxxxxx.. ..xxxxxxx.. | ..xxxxxxx.. ..xxxxxxx.. | ..xxxxxxx.. ..xxxxxxx..
....xxx.... ....xxx.... | ....xxx.... ....xxx.... | ....xxx.... ....xxxx...
93                      | 94                      | 95 
\end{verbatim}
\begin{verbatim}
...xxxxx... ...xxxxx... ...xxxxx... | ...xxxxx... | ...xxxxx... | ...xxxxx...
..xxxxxxx.. ..xxxxxxx.. ..xxxxxxx.. | ..xxxxxxx.. | ..xxxxxxx.. | .xxxxxxxx..
.xxxxxxxxx. .xxxxxxxxx. .xxxxxxxxx. | .xxxxxxxxx. | .xxxxxxxxx. | .xxxxxxxxx.
xxxxxxxxxxx xxxxxxxxxxx xxxxxxxxxx. | xxxxxxxxxxx | xxxxxxxxxxx | xxxxxxxxxxx
xxxxxxxxxxx xxxxxxxxxxx xxxxxxxxxxx | xxxxxxxxxxx | xxxxxxxxxxx | xxxxxxxxxxx
xxxxxxxxxxx xxxxxxxxxxx xxxxxxxxxxx | xxxxxxxxxxx | xxxxxxxxxxx | xxxxxxxxxxx
xxxxxxxxxxx xxxxxxxxxxx xxxxxxxxxxx | xxxxxxxxxxx | xxxxxxxxxxx | xxxxxxxxxxx
xxxxxxxxxx. xxxxxxxxxxx .xxxxxxxxxx | xxxxxxxxxxx | xxxxxxxxxxx | xxxxxxxxxxx
.xxxxxxxxx. .xxxxxxxxx. .xxxxxxxxx. | .xxxxxxxxx. | .xxxxxxxxx. | .xxxxxxxxx.
..xxxxxxx.. ..xxxxxxx.. ..xxxxxxx.. | ..xxxxxxx.. | ..xxxxxxx.. | ..xxxxxxx..
...xxxx.... ....xxx.... ...xxxxx... | ...xxxx.... | ...xxxxx... | ...xxxxx...
95                                  | 96          | 97          | 98
\end{verbatim}
\begin{verbatim}
...xxxxx... ...xxxxx... | ..xxxxxx... ...xxxxx...
.xxxxxxxx.. .xxxxxxxxx. | .xxxxxxxxx. .xxxxxxxxx.
.xxxxxxxxx. .xxxxxxxxx. | .xxxxxxxxx. .xxxxxxxxx.
xxxxxxxxxxx xxxxxxxxxxx | xxxxxxxxxxx xxxxxxxxxxx
xxxxxxxxxxx xxxxxxxxxxx | xxxxxxxxxxx xxxxxxxxxxx
xxxxxxxxxxx xxxxxxxxxxx | xxxxxxxxxxx xxxxxxxxxxx
xxxxxxxxxxx xxxxxxxxxxx | xxxxxxxxxxx xxxxxxxxxxx
xxxxxxxxxxx xxxxxxxxxxx | xxxxxxxxxxx xxxxxxxxxxx
.xxxxxxxxx. .xxxxxxxxx. | .xxxxxxxxx. .xxxxxxxxx.
..xxxxxxxx. ..xxxxxxx.. | ..xxxxxxx.. .xxxxxxxx..
...xxxxx... ...xxxxx... | ...xxxxx... ...xxxxx...
99                      | 100
\end{verbatim}

\begin{table}[htp]
\begin{center}
\begin{tabular}{lllllllllllllllllll}
\hline
  $n$ &                   51 & 52 & 53 & 54 & 55 & 56 & 57 & 58 & 59 & 60 & 61 & 62 & 63 \\
  $S_{\square}(n)\ge$ &   226 & 237 & 245 & 254 & 263 & 272 & 282 & 293 & 303 & 314 & 324 & 334 & 346 \\ 
\hline
\end{tabular}
\begin{tabular}{lllllllllllllllllll}
\hline
  $n$ &                   64 & 65 & 66 & 67 & 68 & 69 & 70& 71 & 72 & 73 & 74 & 75 & 76  \\
  $S_{\square}(n)\ge$ &   358 & 370 & 382 & 394 & 407 & 421 & 431 & 442 & 454 & 466 & 480 & 493 & 507\\ 
\hline
\end{tabular}
\begin{tabular}{lllllllllllllllllll}
\hline
  $n$ &                   77 & 78 & 79 & 80 & 81 & 82 & 83 & 84 & 85 & 86 & 87 & 88 & 89 \\
  $S_{\square}(n)\ge$ &   521 & 535 & 549 & 564 & 578 & 593 & 608 & 623 & 638 & 653 & 669 & 686 & 700 \\ 
\hline
\end{tabular}
\begin{tabular}{lllllllllllllllllll}
\hline
  $n$ &                   90 & 91 & 92 & 93 & 94 & 95 & 96 & 97 & 98 & 99 & 100\\
  $S_{\square}(n)\ge$ &   715 & 731 & 748 & 765 & 782 & 799 & 817 & 836 & 853 & 870 & 887\\ 
\hline
\end{tabular}
\caption{Lower bounds for $S_{\square}(n)$ for $51\le n\le 100$.}
\label{table_lower_bounds_number_of_contained_squares2}
\end{center}
\end{table}

\section{Point sets with many squares such that no pair of points is contained in two different squares}
\label{appendix_hamming}

Point sets showing $S_{\square}\!\left(10,\cP_{6,2}^\star\right)\ge 4$, $S_{\square}\!\left(11,\cP_{6,2}^\star\right)\ge 5$, 
$S_{\square}\!\left(12,\cP_{6,2}^\star\right)\ge 6$, and $S_{\square}\!\left(13,\cP_{6,2}^\star\right)\ge 7$ are given by: 
\begin{verbatim}
                                                 x.xx..                 .x.x..    
              .xxx.         .xx..  .xx..  x..x.  ....x.  xxxx..  x.x..  ......
x.xx.  xx.x.  x.xx.  .x.x.  .xx..  x.x..  .x...  .x...x  ....x.  .xx.x  xx.x..
..xx.  xxx..  .x...  xx.x.  x.x.x  .x.xx  ...xx  x..x..  .x...x  xxx..  xx...x
x.x.x  ....x  ....x  xx.xx  .x...  .x...  x.xxx  ...x..  x..x..  ...x.  ....x.
x..x.  x.xx.  .xx..  ..x..  ..xx.  .x.x.  ..x..  ...x..  ...x..  .x...  ..x...
10 
\end{verbatim} 
\begin{verbatim}
                                 |                 | .xxxx..
        xxxx..   .x.xx..         | .xxxx..         | .....x. 
x.xx.   ....x.   .....x.   x.x.. | .....x.   .xx.. | ..x...x
..xx.   .x...x   ..x...x   .xx.x | ..x...x   .xxx. | .x..x..
x.x.x   x..x..   .x..x..   xxx.. | .x..x..   x...x | x...x..
x..x.   ...x..   x...x..   ...x. | x...x..   .xxx. | ....x..
.x...   ...x..   ....x..   .x..x | ....x..   .xx.. | ....x..
11                               | 12              | 13
\end{verbatim} 

\section{Point sets with many squares determining the same directions}
In the introduction we mentioned the problem of the maximum number $S_{\Vert\,  \square}(n)$ of axis-parallel squares spanned by $n$ points. We may 
also say that the counted squares determine the same directions. Looping over the point sets that can be obtained by recursive $2$-extension gives the lower 
bounds $S_{\Vert\,  \square}(4)\ge 1$, $S_{\Vert\,  \square}(6)\ge 2$, $S_{\Vert\,  \square}(7)\ge 2$, $S_{\Vert\,  \square}(8)\ge 3$, $S_{\Vert\,  \square}(9)\ge 5$, 
$S_{\Vert\,  \square}(10)\ge 5$, $S_{\Vert\,  \square}(11)\ge 6$, $S_{\Vert\,  \square}(12)\ge 8$, $S_{\Vert\,  \square}(13)\ge 8$, $S_{\Vert\,  \square}(14)\ge 9$, 
$S_{\Vert\,  \square}(15)\ge 11$, $S_{\Vert\,  \square}(16)\ge 14$, and $S_{\Vert\,  \square}(17)\ge 14$: 
\begin{verbatim}
   |     | xx. xxx |      xxx xx.x xxx | xxx | xxx. | xxxx xxx.x xxxx
xx | xxx | xxx xxx | xxxx xxx xx.. xxx | xxx | xxxx | xxx. xxx.. xxxx
xx | xxx | .xx .x. | xxxx x.x xx.x xx. | xxx | xxx. | xxxx xxx.x xxx.
4  | 6   | 7       | 8                 | 9   | 10   | 11
\end{verbatim} 
\begin{verbatim}
     |       xxxx |              xxxx xxxx xxx.x xxx.x xxx..x       xxxx
xxxx | xxxx. xxxx | xxxxx xxxx.x xxx. xxxx xxx.. xxx.. xxx... xxxxx xxxx
xxxx | xxxxx xxxx | xxxx. xxxx.. xxxx xxxx xxx.. xxx.x xxx... xxxxx xxx.
xxxx | xxxx. .x.. | xxxxx xxxx.x x.xx x..x xxx.x xxx.. xxx..x xxxx. xx.x
12   | 13         | 14
\end{verbatim}
\begin{verbatim}
xxx.x                     | xxx           |      | ..x.
xxx.. xxxx xxxx xxx. xxxx | xxx x.xx .xxx | xxxx | xxxx
xxx.x xxxx xxxx xxxx xxxx | xxx xxxx xxxx | xxxx | xxxx
..... xxxx xxxx xxxx xxxx | xxx xxxx xxxx | xxxx | xxxx
x.x.x x.x. xx.. .xxx .xx. | xxx xxxx xxxx | xxxx | xxxx
14                        | 15            | 16   | 17
\end{verbatim}

\end{document}